\DeclareFontFamily{U}{matha}{\hyphenchar\font45}
\DeclareFontShape{U}{matha}{m}{n}{
      <5> <6> <7> <8> <9> <10> gen * matha
      <10.95> matha10 <12> <14.4> <17.28> <20.74> <24.88> matha12
      }{}
\DeclareSymbolFont{matha}{U}{matha}{m}{n}
\DeclareFontFamily{U}{mathb}{\hyphenchar\font45}
\DeclareFontShape{U}{mathb}{m}{n}{
      <5> <6> <7> <8> <9> <10> gen * mathb
      <10.95> mathb10 <12> <14.4> <17.28> <20.74> <24.88> mathb12
      }{}
\DeclareSymbolFont{mathb}{U}{mathb}{m}{n}
\DeclareMathSymbol{\ovoid}{3}{matha}{"6C}
\DeclareMathSymbol{\boxvoid}{2}{mathb}{"6C}
\mathchardef\mhyph="2D
\numberwithin{equation}{section}
\newtheorem{theorem}{Theorem}[section]
\newtheorem{corollary}[theorem]{Corollary}
\newtheorem{lemma}[theorem]{Lemma}
\newtheorem{proposition}[theorem]{Proposition}
\theoremstyle{remark}
\newtheorem{remark}[theorem]{Remark}
\newtheorem{example}[theorem]{Example}
\theoremstyle{definition}
\newtheorem{definition}[theorem]{Definition}
\newcommand\bp{\begin{proof}}
\newcommand\ep{\end{proof}}
\newcommand\ee{\nopagebreak\mbox{\ }\hfill$\diamond$}
\newcommand\diag{\operatorname{diag}}
\DeclareMathOperator{\Mat}{Mat}
\DeclareMathOperator{\Rep}{Rep}
\DeclareMathOperator{\Tr}{Tr}
\newcommand{\C}{{\mathbb C}}
\newcommand{\Z}{{\mathbb Z}}
\newcommand{\R}{{\mathbb R}}
\newcommand\T{{\mathbb T}}
\newcommand\eps{\varepsilon}
\newcommand{\hopf}{\mathrm{Hopf}^*}
\begin{document}

\title{Cocycle twisting of semidirect products and transmutation}

\author{Erik Habbestad}
\address{Department of Mathematics, University of Oslo, P.O. Box 1053
Blindern, NO-0316 Oslo, Norway}

\email{erikhab@math.uio.no}

\author{Sergey Neshveyev}
\email{sergeyn@math.uio.no}

\thanks{Supported by the NFR project 300837 ``Quantum Symmetry''.}

\begin{abstract}
We apply Majid's transmutation procedure to Hopf algebra maps $H \to \C[T]$, where $T$ is a compact abelian group, and explain how this construction gives rise to braided Hopf algebras over quotients of $T$ by subgroups that are cocentral in $H$. This allows us to unify and generalize a number of recent constructions of braided compact quantum groups, starting from the braided $SU_q(2)$ quantum group, and describe their bosonizations.
\end{abstract}

\date{April 2, 2023; revised: January 11, 2024}

\maketitle

\section*{Introduction}

Assume that $H$ and $K$ are Hopf algebras together with Hopf algebra maps
\[ p\colon H \to K, \quad i\colon K \to H, \quad p \circ i = \mathrm{id}_K.\]
This setting was considered in 1985 by Radford in his paper \textit{The structure of Hopf algebras with a projection}, \cite{Radford}.
He explained that this data is equivalent to having an object $A$ in the category of Yetter--Drinfeld modules over $K$ satisfying certain conditions reminiscent of a Hopf algebra. This object is often not a genuine Hopf algebra, because the algebra structure considered on $A \otimes A$ involves the braiding on the Yetter--Drinfeld modules. Today, following Majid, it is common to call $A$ a braided Hopf algebra. It is an example of a Hopf algebra object in a braided monoidal category.

As for genuine Hopf algebras, there is a Tannaka--Krein type reconstruction theorem for braided Hopf algebras \cite[Chapter 9]{Majid}. That is, under certain representability conditions, a monoidal functor $\mathcal{C} \to \mathcal{D}$, where $\mathcal{C}$ is rigid and $\mathcal{D}$ is braided, gives rise to a Hopf algebra object in $\mathcal{D}$. A particular example of this is when the functor is induced from a map $\pi\colon H \to K$, where~$H$ is a Hopf algebra and $K$ is a coquasitriangular Hopf algebra: the natural monoidal functor between the categories of comodules \[\mathcal{F}_\pi\colon \mathcal{M}^H \to \mathcal{M}^K,\]
gives rise to a braided Hopf algebra, which Majid calls a \textit{transmutation} of $H$, \cite{Majid-bg}.

More recently, there have been a number of constructions of \textit{braided compact quantum groups}, \cite{ABRR}, \cite{BJR}, \cite{KMRW}, \cite{MR-ort}. For the authors of the present paper they showed up ``in nature'' through a connection with C$^*$-algebras arising from certain subproduct systems, \cite{HN22}. In particular, a Cuntz--Pimsner type algebra associated to the noncommutative polynomial $X_1X_2-\bar q X_2X_1$ turned out to be the C$^*$-algebra of continuous functions on the braided $SU_q(2)$ quantum group, constructed in \cite{KMRW}. Although the analytic/C$^*$-algebraic aspects are therefore important to us, compactness nevertheless allows one to treat an essential part of the theory purely algebraically.

In this paper our starting point is a map $\pi\colon H \to \C[T]$, where $H$ is a Hopf $*$-algebra and $T$ is a compact abelian group. We observe that the resulting transmutation may be viewed as a braided Hopf $*$-algebra over the quotients of $T$ by subgroups $T_0$ that are cocentral in $H$. Using a theorem due to Majid we describe the corresponding Hopf algebra with projection, called \textit{bosonization}, in terms of $2$-cocycle twists of $\C[T/T_0]\ltimes H$. This allows us to treat a number of examples in a unified and efficient way.

The paper is organized as follows. In Section \ref{s1} we collect some facts about braided Hopf $*$-algebras, twisting and transmutation. Section \ref{s2} contains our general results. Here we also suggest a definition of \emph{braided compact matrix quantum groups}, covering in particular transmutations of compact matrix quantum groups, and discuss a connection of transmutation with a recent construction of Bochniak and Sitarz \cite{BS}. Section \ref{s3}, which constitutes a large part of this text, consists of computing examples.

\bigskip

\section{Generalities}\label{s1}
We refer the reader to Majid's book \cite{Majid} for more information on the notions in this section. We only consider unital algebras over the complex numbers.

\medskip

\subsection{Categories of comodules} Let $(H, \Delta, S, \varepsilon)$ be a Hopf $*$-algebra. As is usual in the theory of Hopf algebras, we adopt Sweedler's sumless notation: we write $\Delta(h) = h_{(1)} \otimes h_{(2)}$, but remember that the expression represents a sum of simple tensors. We write $\mathcal{M}^H$ for the monoidal category of right $H$-comodules. For an object $M \in \mathcal{M}^H$ with corresponding map $\delta_M\colon M \to M \otimes H$, we write $\delta_M(m) = m^{(1)} \otimes m^{(2)}$.

Recall that a coquasitriangular structure on $H$ is a linear map $R\colon H \otimes H \to \C$ which is convolution invertible, with convolution inverse $R^{-1}$, and satisfies
\[R(xy,z) = R(x,z_{(1)})R(y,z_{(2)}), \quad R(x,yz) = R(x_{(1)}, z)R(x_{(2)},y), \]
\[y_{(1)}x_{(1)}R(x_{(2)},y_{(2)}) = R(x_{(1)},y_{(1)})x_{(2)}y_{(2)}\]
for all $x,y,z \in H$. We say that $R$ is unitary if it is unitary as an element of the convolution $*$-algebra $(H\otimes H)^*$, or equivalently, as $R=R\circ(S\otimes S)$,
$$
R^{-1}(x,y) = \overline{R(x^*,y^*)}.
$$

For the rest of this subsection we assume that $H$ is given a unitary coquasitriangular structure~$R$. Then $\mathcal{M}^H$ has a braiding given by \[M \otimes N \to N \otimes M, \quad m \otimes n \mapsto R(m^{(2)}, n^{(2)}) n^{(1)} \otimes m^{(1)}.\]
There is also an induced right $H$-module structure on $M$ given by
\begin{equation}\label{eq:right-action}
m\triangleleft h = R(m^{(2)},h) m^{(1)}, \quad h \in H,\ m \in M.
\end{equation}
The right comodule and module structures are compatible in the sense that
$$
\delta_M(m\triangleleft h)=m^{(1)}\triangleleft h_{(2)}\otimes S(h_{(1)})m^{(2)}h_{(3)},
$$
so $M$ becomes a Yetter--Drinfeld module over $H$.

We denote by $\mathrm{Alg}^*(H)$ the category of right $H$-comodule $*$-algebras. In other words, an object in $\mathrm{Alg}^*(H)$ is a $*$-algebra $A$ and a right $H$-comodule such that the map $\delta_A\colon A \to A\otimes H$ is a $*$-homomorphism. For $A, B \in \mathrm{Alg}^*(H)$ we denote by $A \otimes_R B$ the $*$-algebra with underlying vector space $A \otimes B$ equipped with the product
\[(a \otimes b)\cdot(a' \otimes b') = R(b^{(2)}, a'^{(2)})aa'^{(1)} \otimes b^{(1)}b' \]
and the $*$-structure
\begin{equation}\label{eq:braided-star}
(a\otimes b)^* = (1\otimes b^*)\cdot(a^* \otimes 1)=R(b^{(2)*},a^{(2)*})a^{(1)*}\otimes b^{(1)*}.
\end{equation}
The braided tensor product $\otimes_R$ turns $\mathrm{Alg}^*(H)$ into a monoidal category $\mathrm{Alg}^*(H,R)$ with equivariant $*$-homomorphisms as morphisms.

Considering $H$ as an $H$-comodule $*$-algebra with the coaction given by the coproduct, we recover the \textit{smash product} (with respect to the right action~\eqref{eq:right-action}):
\[H \# A = H \otimes_R A.\]

Let $\hopf(H,R)$ denote the category of Hopf $*$-algebras internal to the braided monoidal category $(\mathcal{M}^H,R)$. An object $A \in \hopf(A,R)$ is thus an $H$-comodule $*$-algebra together with $H$-comodule maps
\[\Delta_A\colon A \to A \otimes_R A, \quad  S_A\colon A \to A, \quad \varepsilon_A\colon A \to \C, \]
which are required to fit in commutative diagrams analogous to those defining Hopf $*$-algebras. An object in $\hopf(H,R)$ is called a \textit{braided Hopf $*$-algebra}, and it is usually not a genuine Hopf $*$-algebra. It is, however, always closely related to one:

\begin{definition}
\label{def:bos}
The \textit{bosonization} of $A \in \hopf(H,R)$ is the Hopf $*$-algebra with underlying $*$-algebra $H \#  A$, counit $\varepsilon(h\# a) = \varepsilon_H(h)\varepsilon_A(a)$, coproduct
\begin{equation}
\label{eq:cosmash}
\Delta(h \# a) = (h_{(1)} \# {a_{(1)}}^{(1)}) \otimes (h_{(2)}{a_{(1)}}^{(2)}\# a_{(2)}),
\end{equation}
and antipode
\[S(h\#a) = (1 \# S_A(a^{(1)}))(S_H(ha^{(2)}) \# 1).\]
\end{definition}

That $H \# A$ is a Hopf algebra, is a special case of results of Radford~\cite{Radford}, who proved that a Hopf algebra object in the category of Yetter--Drinfeld-modules is equivalent to a \textit{Hopf algebra with projection}.

\medskip

Let $A$ be a braided Hopf $*$-algebra. We will only consider $A$-comodules internal to $\mathcal{M}^H$. Thus, the notion of an \textit{$A$-comodule} will be reserved for triples $(M,\delta_M, \gamma_M)$, where $\delta_M\colon M \to M \otimes H$ is an $H$-comodule and $\gamma_M\colon M \to M \otimes A$ is a morphism of $H$-comodules that defines a comodule for the coalgebra $A$ in the usual sense.
We record the following well-known result:

\begin{proposition}[cf.~{\cite[Theorem~9.4.12]{Majid}}]
\label{prop:braided-comodule}
Let $A \in \hopf(H,R)$. Then the category of $A$-comodules is isomorphic to the category of $(H \# A)$-comodules through the assignment
\[ (M,\delta_M, \gamma_M) \mapsto (M,(\delta_M \otimes \iota) \gamma_M).\]
The inverse is given by
\[ (M,\delta) \mapsto (M,  (\iota \otimes (\iota\# \varepsilon_A))\delta,(\iota\otimes (\varepsilon_H\# \iota))\delta). \]
\end{proposition}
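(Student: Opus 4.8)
The plan is to verify, by direct manipulation of the smash coproduct~\eqref{eq:cosmash}, that the two assignments are well defined, mutually inverse and functorial. The braiding $R$ plays no explicit role in this; only the coalgebra structures of $H$ and $A$ and their compatibility are used.

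I would begin by noting that the linear maps
\[ \iota_H\colon H\to H\#A,\ h\mapsto h\#1,\qquad p_H\colon H\#A\to H,\ h\#a\mapsto\varepsilon_A(a)\,h,\qquad p_A\colon H\#A\to A,\ h\#a\mapsto\varepsilon_H(h)\,a \]
are coalgebra homomorphisms with $p_H\iota_H=\id_H$. For $\iota_H$ this follows from $\delta_A(1)=1\otimes 1$, and for $p_H$ and $p_A$ one applies $p_H\otimes p_H$, resp.\ $p_A\otimes p_A$, to~\eqref{eq:cosmash} and simplifies using the counit axiom for the $H$-coaction on $A$ and the fact that $\varepsilon_A$ is a morphism of $H$-comodules. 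Pushing comodules forward along $p_H$ and $p_A$ then turns an $(H\#A)$-comodule $(M,\rho)$ into an $H$-comodule $\delta_M:=(\iota\otimes p_H)\rho$ and an $A$-comodule $\gamma_M:=(\iota\otimes p_A)\rho$, which is the data of the claimed inverse assignment; the first assignment sends $(M,\delta_M,\gamma_M)$ to $\rho:=(\delta_M\otimes\iota)\gamma_M$, which automatically takes values in $M\otimes H\otimes A=M\otimes(H\#A)$.

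The core of the proof is a single Sweedler identity. Writing $\rho(m)=m_{(0)}\otimes m_{(1)}\otimes m_{(2)}$ with $m_{(1)}\in H$, $m_{(2)}\in A$, and denoting by $b\mapsto b^{(1)}\otimes b^{(2)}$ the $H$-coaction on $A$, coassociativity of $\rho$ combined with~\eqref{eq:cosmash} reads
\[ \rho(m_{(0)})\otimes m_{(1)}\otimes m_{(2)} = m_{(0)}\otimes m_{(1)(1)}\otimes(m_{(2)(1)})^{(1)}\otimes m_{(1)(2)}(m_{(2)(1)})^{(2)}\otimes m_{(2)(2)}. \]
Applying $\varepsilon_A$ and $\varepsilon_H$ to suitable legs of this identity yields first the reconstruction formula $\rho=(\delta_M\otimes\iota)\gamma_M$, which shows that the first assignment followed by the second is the identity (the other composition being already the identity by the counit axioms, since $(\iota\otimes p_H)(\delta_M\otimes\iota)\gamma_M=\delta_M$ and $(\iota\otimes p_A)(\delta_M\otimes\iota)\gamma_M=\gamma_M$); and, after one further use of coassociativity and~\eqref{eq:cosmash}, it yields that $\gamma_M$ is a morphism of $H$-comodules, $M\otimes A$ carrying the tensor-product $H$-coaction — which is exactly what makes $(M,\delta_M,\gamma_M)$ a legitimate $A$-comodule in $\mathcal{M}^H$. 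Conversely, given $(M,\delta_M,\gamma_M)$ one must check that $\rho=(\delta_M\otimes\iota)\gamma_M$ is a genuine $(H\#A)$-comodule: counitality is immediate from the counit axioms of $\delta_M$ and $\gamma_M$, while for coassociativity one expands $(\rho\otimes\iota)\rho$, reorganizes the left-hand side using coassociativity of $\delta_M$, coassociativity of $\gamma_M$ as an $A$-comodule, and the $H$-colinearity of $\gamma_M$, and matches it against $(\iota\otimes\Delta)\rho$; here the $H$-colinearity of $\gamma_M$ is precisely what reproduces the twist term $m_{(1)(2)}(m_{(2)(1)})^{(2)}$ of~\eqref{eq:cosmash}.

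It then remains to observe that both assignments are functorial, which is formal: in view of $\delta_M=(\iota\otimes p_H)\rho$, $\gamma_M=(\iota\otimes p_A)\rho$ and $\rho=(\delta_M\otimes\iota)\gamma_M$, a linear map is $H$- and $A$-colinear (i.e.\ a morphism of $A$-comodules in $\mathcal{M}^H$) if and only if it is $(H\#A)$-colinear. The only genuine obstacle in the whole argument is the Sweedler bookkeeping around the displayed identity and the coassociativity check — the one place where the precise form of~\eqref{eq:cosmash}, and in particular its twist term, has to be matched against the compatibility between the $H$- and $A$-coactions on $M$; everything else reduces to applications of the counit axioms.
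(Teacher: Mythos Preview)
Your direct verification is correct and complete in outline: the coalgebra maps $p_H$, $p_A$ and $\iota_H$ behave as you claim, the reconstruction identity $\rho=(\delta_M\otimes\iota)\gamma_M$ does follow by applying $\iota\otimes p_H\otimes p_A$ to the coassociativity equation and using that $\varepsilon_A$ is $H$-colinear, and the $H$-colinearity of $\gamma_M$ together with the coassociativity of $\rho$ are indeed the two nontrivial checks, both of which reduce to matching the twist term in~\eqref{eq:cosmash}. The paper itself does not prove this proposition; it records it as a well-known result and refers to Majid's book~\cite[Theorem~9.4.12]{Majid}, so there is no argument in the paper to compare yours against.
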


We will say that a finite dimensional $A$-comodule is \emph{unitary}, if the corresponding $(H\#A)$-comodule is unitary. Recall that a finite dimensional comodule $(M,\delta)$ over a Hopf $*$-algebra $H'$ is called unitary, if $M$ is equipped with a scalar product and
$$
\langle\delta(m),\delta(m)\rangle=(m,m')1\ \ \text{for all}\ \ m,m'\in M,
$$
where the $H'$-valued sesquilinear form $\langle\cdot,\cdot\rangle$ is defined by $\langle m\otimes a,m'\otimes b\rangle=(m,m')b^*a\in H'$.

\subsection{Twisting and transmutation} Assume that $J\colon H \otimes H \to \C$ is a Hopf 2-cocycle. This means that $J$ is convolution invertible and satisfies
\[ J(x_{(1)},y_{(1)})J(x_{(2)}y_{(2)},z) = J(y_{(1)}, z_{(1)})J(x, y_{(2)}z_{(2)})\]
for $x, y, z \in H$. We say that $J$ is unitary if $J^* = J^{-1}$ in the convolution $*$-algebra $(H \otimes H)^*$, that is,
$$
J^{-1}(x,y)=\overline{J(S(x)^*,S(y)^*)}.
$$
Given $J$, we can define a convolution invertible element $u\colon H \to \C$ by
\[ u(x) = J(x_{(1)},S(x_{(2)})).\]
The following is the well-known twisting procedure for Hopf $*$-algebras, see, e.g.,~\cite[Theorem~2.3.4]{Majid}.
\begin{proposition}
\label{prop:cocycle-twist}
Let $J$ be a unitary Hopf 2-cocycle on $H$. There is a Hopf $*$-algebra ${}_JH_{J^{-1}}$ having the same coalgebra structure as $H$ and new product, antipode and involution defined by
\[ x \star_J y  = J(x_{(1)}, y_{(1)})x_{(2)}y_{(2)}J^{-1}(x_{(3)}, y_{(3)}),\]
\[ S^J(x) = u(x_{(1)})S(x_{(2)})u^{-1}(x_{(3)}), \quad x^{*_J} = u^{-1}(S(x_{(1)})^*)x^*_{(2)}u(S(x_{(3)})^*)=S^J(S^{-1}(x^*)).\]
\end{proposition}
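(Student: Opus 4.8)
The statement to prove is Proposition~\ref{prop:cocycle-twist}, the well-known cocycle twisting procedure. I'll describe how I'd prove it.

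My plan is to verify directly that the proposed operations satisfy the Hopf $*$-algebra axioms, exploiting the 2-cocycle identity for $J$ and the unitarity condition $J^{-1}(x,y) = \overline{J(S(x)^*, S(y)^*)}$ at the appropriate moments. I will first check that $\star_J$ is associative, then that $\varepsilon$ remains a counit for $\star_J$ and that $\Delta$ is still an algebra homomorphism (the coalgebra structure being unchanged, this is where the cocycle condition really bites), then that $S^J$ is an antipode, and finally that $*_J$ is an involution compatible with the new product and coproduct.

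**Key steps in order.** First I would record the consequences of the cocycle identity that will be used repeatedly: a ``dual'' cocycle identity for $J^{-1}$, and the formulas for the convolution inverse $u^{-1}$ of $u(x) = J(x_{(1)}, S(x_{(2)}))$, namely $u^{-1}(x) = J^{-1}(S(x_{(1)}), x_{(2)})$, which one derives by applying the cocycle identity with a well-chosen triple and using $S$. Then, for associativity of $\star_J$, I would expand $(x \star_J y)\star_J z$ and $x\star_J(y\star_J z)$ using coassociativity of $\Delta$ (unchanged) and reduce the equality to the pair of cocycle identities for $J$ and $J^{-1}$; this is a bookkeeping argument in Sweedler notation. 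That $\Delta$ is multiplicative for $\star_J$ is immediate since the twisting only inserts scalar factors $J, J^{-1}$ that cancel against the ``inner'' copies when one applies $\Delta$ and uses coassociativity; the counit property is similarly a short computation using $J(1,x) = \varepsilon(x) = J(x,1)$, which follows from the cocycle identity. For the antipode, I would verify $S^J(x_{(1)})\star_J x_{(2)} = \varepsilon(x)1 = x_{(1)}\star_J S^J(x_{(2)})$ by substituting the definitions of $S^J$ and $\star_J$, collecting the $u$ and $J$ factors, and invoking the cocycle identity together with the defining relation of $u^{-1}$; here one uses that $S$ is the antipode of the original $H$. Finally, for the $*$-structure I would check $(x^{*_J})^{*_J} = x$ and $(x\star_J y)^{*_J} = y^{*_J}\star_J x^{*_J}$ and $\Delta(x^{*_J}) = (x_{(1)})^{*_J}\otimes (x_{(2)})^{*_J}$; the anti-multiplicativity is where unitarity of $J$ enters decisively, via $\overline{J(x,y)} = J^{-1}(S(x)^*, S(y)^*)$, which converts the complex conjugates arising from $(\cdot)^*$ on $H$ into the $J^{-1}$ factors demanded by $\star_J$. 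The identity $x^{*_J} = S^J(S^{-1}(x^*))$ I would verify as a separate short computation comparing the two formulas, using the expression for $u^{-1}$ and the fact that $S$ is an anti-coalgebra map.

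**Main obstacle.** The genuinely delicate points are two: first, correctly identifying $u^{-1}$ and the dual cocycle identity for $J^{-1}$, since getting the placement of $S$ and the order of arguments right is what makes the antipode computation close up; and second, the compatibility of $*_J$ with $\star_J$, where one must track complex conjugates through several Sweedler legs and apply unitarity of $J$ in exactly the right spot — a sign error or a misplaced $S$ there propagates and breaks anti-multiplicativity. Coassociativity and the purely algebraic cocycle manipulations for associativity of $\star_J$ are lengthy but routine. Since the result is standard (we cite \cite[Theorem~2.3.4]{Majid}), I would in the write-up only indicate these computations rather than grind through every Sweedler index.
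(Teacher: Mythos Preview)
Your proposal is correct and matches the paper's own treatment: the paper does not give a proof at all, it simply cites \cite[Theorem~2.3.4]{Majid} for the Hopf algebra part, refers to \cite[Example~2.3.9]{NT13} for the $*$-structure in the unitary case, and remarks that ``it is not difficult to check directly that the definition works for general Hopf $*$-algebras.'' Your outlined direct verification is exactly that check, and the places you flag as delicate (the formula for $u^{-1}$ and the anti-multiplicativity of $*_J$ via unitarity of $J$) are indeed the only nonroutine points.
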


We remark that we are interested in unitary cocycles, while Majid in~\cite[Section~2.3]{Majid} considers so-called real ones. That the $*$-structure on ${}_JH_{J^{-1}}$ defined above is the correct one in the unitary case is explained in~\cite[Example~~2.3.9]{NT13} in the context of compact quantum groups. It is not difficult to check directly that the definition works for general Hopf $*$-algebras.

\smallskip

Next we consider a way to produce braided Hopf algebras, due to Majid \cite{Majid-bg}. Recall that the adjoint comodule for $H$ is given by
\[\mathrm{ad}\colon H \to H \otimes H, \quad \mathrm{ad}(x) = x^{(1)} \otimes x^{(2)} = x_{(2)} \otimes  S(x_{(1)})x_{(3)}.\]

Let us for the moment forget about the $*$-structure on $H$. The process of \textit{transmutation} produces a braided Hopf algebra from the $H$-comodule $(H,\mathrm{ad})$.

\begin{proposition}[{\cite[Theorem~4.1]{Majid-bg}}]
\label{prop:transmutation}
Assume $H$ is a Hopf algebra with a coquasitriangular structure $R\colon H \otimes H \to \C$. Then $H$ with the same coalgebra structure and new product $\cdot_R$ and antipode $S_R$, given by
\[x \cdot_R y = x_{(2)}y_{(3)}R(x_{(3)}, S(y_{(1)}))R(x_{(1)}, y_{(2)}), \]
\[S_R(x) = S(x_{(2)})R(S^2(x_{(3)})S(x_{(1)}), x_{(4)}), \]
defines a braided Hopf algebra $H_R$ over $(H,R)$, where the comodule structure is given by $\mathrm{ad}$.
\end{proposition}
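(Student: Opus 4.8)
The plan is to verify directly that $H$, equipped with the adjoint coaction $\mathrm{ad}$, the product $\cdot_R$, the unit $1$, the original coproduct $\Delta$ and counit $\varepsilon$, and the antipode $S_R$, satisfies the diagrams defining a Hopf algebra object in the braided monoidal category $(\mathcal{M}^H, R)$; the $*$-structure is ignored, as in the statement. (Conceptually $H_R$ is the braided Hopf algebra reconstructed via braided Tannaka--Krein \cite[Ch.~9]{Majid} from the identity monoidal functor $\mathcal{M}^H\to(\mathcal{M}^H,R)$, which explains why its coalgebra structure is unchanged; but extracting the explicit formulas for $\cdot_R$ and $S_R$ from the reconstruction costs the same computations, so I would carry out the hands-on check.) Several ingredients do not involve $R$ and are standard: $\mathrm{ad}$ is a right $H$-coaction, and both $\Delta\colon H\to H\otimes H$ and $\varepsilon\colon H\to\C$ are morphisms of $H$-comodules, where $H$ carries $\mathrm{ad}$, $\C$ the trivial coaction, and $H\otimes H$ the tensor-product coaction built from $\mathrm{ad}$ on each factor --- this is the precise sense in which $\mathrm{ad}$ is ``adjoint''. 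It is convenient to record first the usual consequences of convolution-invertibility of $R$: $R(1,x)=\varepsilon(x)=R(x,1)$, $\;R(S(x),y)=R^{-1}(x,y)$, and $R(S(x),S(y))=R(x,y)$.

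The verification then splits into the following steps, each done by manipulating Sweedler legs and cancelling matching $R$/$R^{-1}$ pairs against the first two defining identities of $R$ and the quasi-cocommutativity identity $y_{(1)}x_{(1)}R(x_{(2)},y_{(2)})=R(x_{(1)},y_{(1)})x_{(2)}y_{(2)}$. \emph{(i)} $\cdot_R$ is associative with unit $1$: the unit axiom is immediate from $R(1,-)=\varepsilon=R(-,1)$, while associativity is a Sweedler identity that reduces, after cancellation, to the first two identities for $R$. \emph{(ii)} $\mathrm{ad}$ is multiplicative --- equivalently $m_R$ is $H$-colinear --- so that $(H,\cdot_R)$ with coaction $\mathrm{ad}$ is an object of $\mathrm{Alg}(H,R)$: one expands $\mathrm{ad}(x\cdot_R y)$ using that $\Delta$ is an algebra homomorphism and matches it with $\mathrm{ad}(x)\,\mathrm{ad}(y)$, the quasi-cocommutativity identity entering here. \emph{(iii)} $\varepsilon$ is multiplicative for $\cdot_R$: immediate, since applying $\varepsilon$ to the product formula leaves $R(x_{(1)},y_{(2)})R(x_{(2)},S(y_{(1)}))=R\big(x,S(y_{(1)})y_{(2)}\big)=\varepsilon(x)\varepsilon(y)$. \emph{(iv)} $\Delta$ is an algebra homomorphism $H_R\to H_R\otimes_R H_R$, the target carrying the $R$-braided product of the text. \emph{(v)} $S_R$ is a morphism of $H$-comodules and satisfies $m_R(S_R\otimes\mathrm{id})\Delta=1\varepsilon=m_R(\mathrm{id}\otimes S_R)\Delta$; both are checked by direct expansion, and that $S_R$ is then a braided anti-homomorphism is a formal consequence. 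Granting \emph{(i)}--\emph{(v)}, all the diagrams required of a Hopf algebra object in $(\mathcal{M}^H,R)$ commute, which is the assertion.

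The main obstacle is step \emph{(iv)}: it is the one place where the braiding built into $\otimes_R$ must be reconciled with the two copies of $R$ hidden inside $\cdot_R$. Expanding $\Delta(x\cdot_R y)$ produces several copies of $R^{\pm1}$ coming from $\cdot_R$, whereas the $R$-braided product on $H_R\otimes_R H_R$ applied to $\Delta(x)\otimes\Delta(y)$ produces the braiding $R$ together with further copies of $R^{\pm1}$ from the two internal applications of $\cdot_R$, and these two Sweedler expressions must be shown to coincide. This is exactly the point at which the quasi-cocommutativity identity does the real work --- it is what lets one push a braiding past a multiplication --- and, combined with repeated use of the first two identities for $R$, it collapses both sides to the same expression. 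An alternative that avoids the longest computation is to establish \emph{(i)}--\emph{(iii)} by hand and then deduce \emph{(iv)} and \emph{(v)} from the reconstruction picture mentioned above.
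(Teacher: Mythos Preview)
The paper does not supply a proof of this proposition; it is stated with a bare citation to Majid's original result \cite[Theorem~4.1]{Majid-bg}. Your outline --- a direct verification of the braided Hopf-algebra axioms in $(\mathcal{M}^H,R)$, organised into steps \emph{(i)}--\emph{(v)} --- is the standard route and is essentially what Majid carries out; the identification of step \emph{(iv)} as the crux, and of the quasi-cocommutativity identity as the tool that makes it collapse, is correct.
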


Note that even though the coproduct is not changed, it is now considered as an algebra map $H \to H \otimes_R H$.

\smallskip

More generally, assume that $\pi\colon H \to K$ is a map of Hopf algebras, where $K$ has a coquasitriangular structure $R$. Although $\pi^*R=R\circ(\pi\otimes\pi)$ is not a coquasitriangular structure on $H$ in general, the same formulas as above, with $R$ replaced by $\pi^*R$, define a braided Hopf algebra~$H_R$ over $(K,R)$ with the restricted coaction
\begin{equation}
\label{eq:adjoint-restriction}
\mathrm{ad}_\pi = (\iota\otimes\pi)\mathrm{ad}\colon H \to H \otimes K.
\end{equation}

The next result relates the bosonization of $H_R$ to a Hopf algebra with the tensor product coalgebra structure.

\begin{theorem}
\label{thm:cocycle-transmutation}
Assume that $\pi\colon H \to K$ is a map of Hopf algebras and $R \colon K \otimes K \to \C$ is a coquasitriangular structure on $K$. Let $K \bowtie_R H$ denote the Hopf algebra that coincides with $K\otimes H$ as a coalgebra, but has the twisted product
\[(k \otimes h) \cdot (k' \otimes h') = R^{-1}(\pi(h_{(1)}),k_{(1)}') \, kk_{(2)}' \otimes h_{(2)}h' \, R(\pi( h_{(3)}), k_{(3)}').\]
Then the map
\[K \bowtie_R H\to K \# H_R, \quad k \otimes h \mapsto k\pi(h_{(1)}) \# h_{(2)}, \]
defines an isomorphism of Hopf algebras.
\end{theorem}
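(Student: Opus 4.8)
The plan is to verify directly that the stated linear map $\varphi\colon K\bowtie_R H\to K\#H_R$, $\varphi(k\otimes h)=k\pi(h_{(1)})\otimes h_{(2)}$ (writing $\otimes$ for $\#$), is bijective, multiplicative, and comultiplicative; being a counit-preserving bialgebra map between Hopf algebras, it is then automatically an isomorphism of Hopf algebras (compatibility with the antipode is automatic). First I would record the inverse map. Since $\pi$ is a coalgebra map, the assignment $k\otimes h\mapsto k\,S(\pi(h_{(1)}))\otimes h_{(2)}$ is a two-sided inverse at the level of vector spaces; this is a routine Sweedler computation using $\pi(h)_{(1)}\otimes\pi(h)_{(2)}=\pi(h_{(1)})\otimes\pi(h_{(2)})$ and the antipode axioms, so bijectivity is essentially free.

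Next I would check multiplicativity, which is the substantive point. Here one must unravel three products: the braided product $\cdot_R$ on $H_R$ from Proposition~\ref{prop:transmutation} (with $R$ replaced by $\pi^*R$), the smash product structure on $K\#H_R=K\otimes_R H_R$ coming from the right action \eqref{eq:right-action} of $K$ on $H_R$ via the coaction $\mathrm{ad}_\pi$, and the twisted product on $K\bowtie_R H$ displayed in the statement. I would compute $\varphi\big((k\otimes h)\cdot(k'\otimes h')\big)$ and $\varphi(k\otimes h)\cdot\varphi(k'\otimes h')$ separately and match them. The mechanism is that the $R^{\pm1}$-factors appearing in the definition of $\cdot_R$, together with those coming from the braided tensor product $\otimes_R$ over $K$, should collapse — using the bicharacter identities $R(xy,z)=R(x,z_{(1)})R(y,z_{(2)})$ and $R(x,yz)=R(x_{(1)},z)R(x_{(2)},y)$ and convolution invertibility — into exactly the two factors $R^{-1}(\pi(h_{(1)}),k'_{(1)})$ and $R(\pi(h_{(3)}),k'_{(3)})$ that define the product on $K\bowtie_R H$. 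The key simplification is that the ``extra'' instances of $R$ in $x\cdot_R y = x_{(2)}y_{(3)}R(x_{(3)},S(y_{(1)}))R(x_{(1)},y_{(2)})$ pair an argument with its own antipode-partner and get absorbed when we pass through $\pi$ and the shift $h\mapsto\pi(h_{(1)})\otimes h_{(2)}$; this is exactly the same bookkeeping that shows $H_R$ is a braided Hopf algebra, now done one level up.

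For comultiplicativity I would use that both sides have the tensor-product coalgebra structure on $K\otimes H$ on the source, and on the target the cosmash coproduct \eqref{eq:cosmash} applied to the braided Hopf algebra $H_R$ whose coproduct is still $\Delta_H$ but read into $H_R\otimes_R H_R$. Plugging $a=h_{(2)}$ and $h\rightsquigarrow k\pi(h_{(1)})$ into \eqref{eq:cosmash} and using $\mathrm{ad}_\pi(h)=h_{(2)}\otimes\pi(S(h_{(1)})h_{(3)})$ together with $\Delta\circ\pi=(\pi\otimes\pi)\circ\Delta$, the terms $S(h_{(1)})\cdots h_{(3)}$ telescope against the $\pi(h_{(1)})$ inserted by $\varphi$, leaving $(\varphi\otimes\varphi)\Delta_{K\otimes H}$. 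Counit preservation is immediate from $\varepsilon_H(h_{(1)})h_{(2)}=h$ and $\varepsilon_K\circ\pi=\varepsilon_H$.

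The main obstacle will be the multiplicativity computation: keeping the Sweedler legs straight through the nested $R$-twists of $\cdot_R$, of $\otimes_R$, and of $\bowtie_R$ simultaneously is genuinely delicate, and the cancellations only work because the coaction on $H_R$ is the \emph{restricted} adjoint coaction $\mathrm{ad}_\pi$ rather than a naive one — so I would be careful to use $\mathrm{ad}_\pi$ (not $\mathrm{ad}$) at every step and to exploit that $\pi$ intertwines it with $\Delta_K$. A cleaner alternative, which I would fall back on if the direct check becomes unwieldy, is to identify $K\bowtie_R H$ abstractly: it is the Radford biproduct associated to the projection $K\otimes H\to K$, $k\otimes h\mapsto k\varepsilon_H(h)$ and inclusion $k\mapsto k\otimes1$, so by Radford's theorem it must be the bosonization of \emph{some} braided Hopf algebra in $\mathcal{M}^K$, and one then only needs to identify that braided Hopf algebra with $H_R$ — i.e.\ recover Majid's transmutation formulas of Proposition~\ref{prop:transmutation} by applying the inverse assignment of Proposition~\ref{prop:braided-comodule}-type bookkeeping to $K\bowtie_R H$. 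This reduces the problem to matching two products on $H$, which is exactly the content of \cite[Theorem~4.1]{Majid-bg} relativized along $\pi$.
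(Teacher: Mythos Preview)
Your main route---direct verification of bijectivity, multiplicativity, and comultiplicativity---is sound and is exactly what the paper means by ``a direct computation''; the paper itself does not carry this out but simply cites \cite[Theorem~7.4.10]{Majid} for the case $K=H$, $\pi=\id$, and asserts that the general case is analogous. Your outline of the cancellations (the $R^{\pm1}$-factors from $\cdot_R$ and from $\otimes_R$ telescoping against each other via the bicharacter identities, with $\mathrm{ad}_\pi$ supplying the correct coaction) is the right mechanism, and your explicit inverse $k\otimes h\mapsto kS(\pi(h_{(1)}))\otimes h_{(2)}$ is correct.

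One inaccuracy in your fallback Radford approach: the projection $K\bowtie_R H\to K$ you propose, $k\otimes h\mapsto k\varepsilon_H(h)$, is \emph{not} an algebra map for the twisted product (it fails unless the $R$-factors collapse, which they do not in general). The correct Hopf algebra projection is $k\otimes h\mapsto k\pi(h)$, as the paper notes immediately after the theorem; with this projection the Radford argument does go through and recovers $H_R$ as the braided Hopf algebra of coinvariants.
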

\begin{proof}
For $K=H$ and $\pi=\operatorname{id}$, this is~\cite[Theorem 7.4.10]{Majid}. The general case can be proved similarly or by a direct computation.
\end{proof}

Note that $K \otimes \C 1$ is a Hopf subalgebra of $K \bowtie_R H$. Therefore $K \bowtie_R H$ is a Hopf algebra with projection $K \bowtie_R H\to K$, $k\otimes h\mapsto k\pi(h)$, and then the transmuted braided Hopf algebra~$H_R$ can be viewed as a particular case of the construction of Radford~\cite{Radford}.

\begin{remark}\label{rem:cocycle}
The formula for the product on $K\bowtie_R H$ is the same as for the cocycle twisting~by
$$
J((k \otimes h) , (k' \otimes h'))=\eps_K(k)\eps_H(h')R^{-1}(\pi(h), k'),
$$
but the element $J$ is not a Hopf $2$-cocycle on $K\otimes H$ in general. It becomes a $2$-cocycle, when~$K$ is cocommutative. Note also that if $K$ is both commutative and cocommutative, then $H$ is a $K$-comodule algebra with respect to the adjoint coaction and then as an algebra $K\bowtie_R H$ coincides with $K\# H=K\otimes_R H$.
\end{remark}

\bigskip

\section{Transmutation over abelian groups}\label{s2}
Let $(H, \Delta, S, \varepsilon)$ be a Hopf $*$-algebra and $T$ be a compact abelian group. We write $\hat{T}$ for the dual discrete group and $\C\hat{T}$ for the group Hopf $*$-algebra of $\hat{T}$. We will frequently identity $\C\hat{T}$ with the function algebra $\C[T]$, and will use the latter notation when it is natural to focus on the compact group $T$. Throughout this section we assume that we are given a Hopf $*$-algebra map $\pi\colon H \to \C\hat{T}$.

\subsection{Braided Hopf algebras over quotients of \texorpdfstring{$T$}{T}}
There are canonical commuting left and right coactions
\[ \delta_L = (\pi \otimes \iota)\Delta, \quad \delta_R = (\iota\otimes \pi)\Delta \] by $\C\hat{T}$ on $H$. It follows that $H$ is bi-graded by $\hat{T}$. More precisely, $H= \bigoplus_{a,b \in \hat{T}} H_{a,b}$, where
\[ H_{a,b} = \{ x \in H \, | \, \delta_L(x) = a\otimes x \mbox{ and } \delta_R(x) = x \otimes b \}. \]
A consequence of coassociativity is then that for any $x \in H$ we have
\begin{equation}
\label{eq:coproduct-homo}
\Delta(x) = x_{(1)}\otimes x_{(2)} \in \bigoplus_{a,b,c} H_{a,b} \otimes H_{b,c}.
\end{equation}
We use the shorthand notation $x_{(1)}^{a,b} \otimes x_{(2)}^{b,c}$ to mean the part of the sum $x_{(1)} \otimes x_{(2)}$ which is in $H_{a,b} \otimes H_{b,c}$. Thus, $\Delta(x) = \sum_{a,b,c} x_{(1)}^{a,b} \otimes x_{(2)}^{b,c}$.

\medskip

Consider a closed subgroup $T_0 \subset T$, with the corresponding restriction map $q\colon \C[T] \to \C[T_0]$. We say that $T_0$ is $H$-\textit{cocentral}, or that the map $q\pi$ is cocentral, if the induced adjoint coaction
\[ \mathrm{ad}_{q\pi} = (\iota\otimes q\pi)\mathrm{ad}\colon H \to H \otimes \C[T_0] \]
is trivial. In other words,
$$
H_{a,b}=0\quad\text{whenever}\quad q(a)\ne q(b).
$$
This condition implies that if we view $\C[T/T_0]$ as a Hopf $*$-subalgebra of $\C[T]$, then
\begin{equation}
\label{diag:ad}
\mathrm{ad}_\pi(H)\subset H\otimes\C[T/T_0],
\end{equation}
so that $(H,\mathrm{ad}_\pi)$ can be viewed as a $\C[T/T_0]$-comodule.

Denote by $\Delta(T_0)$ the diagonal subgroup in $T_0 \times T_0$. It is $(\C[T]\otimes H)$-cocentral with respect to the composition of $\iota\otimes\pi\colon\C[T]\otimes H\to \C[T]\otimes\C[T]$ with the restriction map $\C[T\times T]\to \C[\Delta(T_0)]$. It follows that the spaces of right and left coinvariants coincide,
\begin{equation}\label{eq:coinvariants}
(\C[T] \otimes H)^{\Delta(T_0)}={}^{\Delta(T_0)}(\C[T] \otimes H)=\bigoplus_{\substack{a,b,c:\\ q(ab)=1}}a\otimes H_{b,c}
=\bigoplus_{\substack{a,b,c:\\ q(ac)=1}}a\otimes H_{b,c},
\end{equation}
and define a Hopf $*$-subalgebra of $\C[T]\otimes H$.

In the present setting we observe that $H$ equipped with $\mathrm{ad}_\pi$ is an object in $\hopf(\C[T], \eps\otimes\eps)$ without any modifications.
Moreover, by~\eqref{diag:ad} it can also be viewed as an object in $\hopf(\C[T/T_0],\eps\otimes\eps)$.  The corresponding bosonization $\C[T/T_0] \ltimes H$ is just the tensor product $*$-algebra with the coproduct defined by~\eqref{eq:cosmash} (in other words, as a coalgebra, it is the \emph{smash coproduct} of $\C[T/T_0]$ and $H$):
$$
\Delta_{\C[T/T_0] \ltimes H}(a\otimes x)=\sum_d(a\otimes x^{b,d}_{(1)})\otimes (ab^{-1}d\otimes x^{d,c}_{(2)})
$$
for $a\in\widehat{T/T_0}\subset\hat T$ and $x\in H_{b,c}$.

\begin{proposition}
\label{prop:main-untwisted}
Consider $H$ as a $\C[T]$-comodule Hopf $*$-algebra under the adjoint coaction~$\mathrm{ad}_\pi$.  Then
\[\Theta\colon \C[T] \otimes H \to \C[T] \ltimes H, \quad \Theta(a \otimes x) = a\pi(x_{(1)})\otimes x_{(2)},\]
is an isomorphism of Hopf $*$-algebras. Moreover, $\Theta$ restricts to an isomorphism of Hopf $*$-algebras
\[ (\C[T] \otimes H)^{\Delta(T_0)} \cong \C[T/T_0] \ltimes H\]
for any $H$-cocentral closed subgroup $T_0 \subset T$.
\end{proposition}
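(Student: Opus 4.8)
The plan is to deduce Proposition \ref{prop:main-untwisted} from Theorem \ref{thm:cocycle-transmutation} applied to the map $\pi\colon H\to\C[T]=\C\hat T$, using that $\C[T]$ is commutative and cocommutative. In that special case Remark \ref{rem:cocycle} tells us that the twisting $2$-cocycle $J$ is genuinely a Hopf $2$-cocycle on $\C[T]\otimes H$, that $K\bowtie_R H$ with $K=\C[T]$ and the trivial coquasitriangular structure $R=\eps\otimes\eps$ is just $\C[T]\otimes H$ with the \emph{untwisted} tensor product algebra structure (so the product is unchanged), and that $H_R$ is precisely the transmutation $(H,\mathrm{ad}_\pi)$ regarded as a braided Hopf $*$-algebra over $(\C[T],\eps\otimes\eps)$, whose bosonization is $\C[T]\ltimes H$. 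Thus the first assertion of the proposition is exactly the isomorphism of Theorem \ref{thm:cocycle-transmutation} in this case, namely $k\otimes h\mapsto k\pi(h_{(1)})\otimes h_{(2)}$, once one checks it is $*$-preserving; this is a short computation using the formula \eqref{eq:braided-star} for the $*$-structure on $H\#H_R = \C[T]\otimes_R H_R$ together with $R=\eps\otimes\eps$ and commutativity of $\C[T]$, which collapses \eqref{eq:braided-star} to the ordinary tensor-product involution.

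For the second, "moreover", part I would first describe the image $\Theta\big((\C[T]\otimes H)^{\Delta(T_0)}\big)$ explicitly. Using the description \eqref{eq:coinvariants} of the coinvariants as $\bigoplus_{q(ac)=1} a\otimes H_{b,c}$, and the fact that $\pi(x_{(1)})$ for $x\in H_{b,c}$ lives in the span of grouplikes $\pi$ sends $H_{b,?}$ into — more precisely, by the bigrading \eqref{eq:coproduct-homo} the term $\pi(x_{(1)}^{b,d})\otimes x_{(2)}^{d,c}$ contributes $\pi(x_{(1)}^{b,d})\in\C d$ on the left leg — one computes that $\Theta(a\otimes x)$ for $x\in H_{b,c}$ is a sum of terms $ad\otimes x_{(2)}^{d,c}$ with $d$ ranging over $\hat T$; the constraint $q(ac)=1$ together with the $H$-cocentrality of $T_0$ (which forces $q(d)=q(c)$ whenever $x_{(2)}^{d,c}\neq0$, since $x_{(2)}^{d,c}\in H_{d,c}$) gives $q(ad)=q(ac)=1$, i.e. $ad\in\widehat{T/T_0}$. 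Hence $\Theta$ maps $(\C[T]\otimes H)^{\Delta(T_0)}$ into $\C[T/T_0]\otimes H$, which is precisely $\C[T/T_0]\ltimes H$ as a Hopf $*$-algebra by the discussion preceding the proposition. Conversely, given $a\in\widehat{T/T_0}$ and $x\in H_{b,c}$ (so $q(b)=q(c)$ by cocentrality), applying $\Theta^{-1}$, which is $a\otimes x\mapsto a\pi(S(x_{(1)}))\otimes x_{(2)}$, lands in $(\C[T]\otimes H)^{\Delta(T_0)}$ by the same bookkeeping with \eqref{eq:coinvariants}; this shows $\Theta$ restricts to a bijection between the two subalgebras.

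The main technical point to get right is the index bookkeeping in the previous paragraph — keeping track of which grouplike $\pi$ produces on each homogeneous component and matching it against the two equivalent descriptions of $(\C[T]\otimes H)^{\Delta(T_0)}$ in \eqref{eq:coinvariants} — together with verifying that $\Theta$ actually intertwines the coproducts, not merely the algebra structures. For the latter one compares the smash coproduct formula $\Delta_{\C[T/T_0]\ltimes H}(a\otimes x)=\sum_d(a\otimes x^{b,d}_{(1)})\otimes(ab^{-1}d\otimes x^{d,c}_{(2)})$ displayed just before the proposition with the tensor-product coproduct on $(\C[T]\otimes H)^{\Delta(T_0)}$ transported through $\Theta$; this is again a direct but slightly intricate Sweedler-notation computation, which I would either carry out or, since the global statement for $T_0=\{1\}$ already follows from Theorem \ref{thm:cocycle-transmutation}, obtain for general $T_0$ simply by restricting the already-established isomorphism $\C[T]\otimes H\cong\C[T]\ltimes H$ to the Hopf $*$-subalgebra $(\C[T]\otimes H)^{\Delta(T_0)}$ and identifying its image as above. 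I expect the restriction argument to be the cleanest route, so the only real work is the identification of the image, and the only real obstacle is not losing a factor while shuffling the bigrading through $\Theta$.
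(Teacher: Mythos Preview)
Your approach is exactly the paper's: invoke Theorem~\ref{thm:cocycle-transmutation} with $K=\C[T]$ and $R=\eps\otimes\eps$ for the first isomorphism (checking the $*$-structure by hand), then restrict the resulting Hopf $*$-algebra isomorphism to the coinvariant subalgebra for the second. One simplification: your bigrading bookkeeping contains a slip (for $y\in H_{b,d}$ one has $\pi(y)=\eps(y)\,b\in\C b$, not $\C d$) and is in any case unnecessary, since $(\pi\otimes\iota)\Delta(x)=\delta_L(x)=b\otimes x$ for $x\in H_{b,c}$ gives $\Theta(a\otimes H_{b,c})=ab\otimes H_{b,c}$ directly; combined with \eqref{eq:coinvariants} in the form $\bigoplus_{q(ab)=1}a\otimes H_{b,c}$, this identifies the image as $\C[T/T_0]\otimes H$ in one line, which is how the paper does it.
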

\begin{proof}
That $\Theta$ is an isomorphism is easily verified, but except for the $*$-structure it is also a special case of Theorem~\ref{thm:cocycle-transmutation}.  As $\Theta(a \otimes H_{b,c}) = ab \otimes H_{b,c}$ and $\C[T/T_0]$ is spanned by $a\in\hat T$ such that $q(a)=1$, the second part of the proposition follows from~\eqref{eq:coinvariants}.
\end{proof}

\begin{remark}
Let $G$ be a compact group with a closed abelian subgroup $T$. Assume that $T_0$ is a closed subgroup of $T \cap Z(G)$, where $Z(G)$ is the center of~$G$. As $T_0$ acts trivially under the conjugation action $(t,g) \mapsto tgt^{-1}$ of $T$ on $G$, we have an induced action on $G$ by the quotient group~$T/T_0$. Then the map
\[(T/T_0) \ltimes G\to (T \times G)/\Delta(T_0), \quad ([t], g)\mapsto [(t,tg)],\]
is a group isomorphism. The above result can be seen as a generalization of this.\ee
\end{remark}

Next, fix a bicharacter $\beta\colon \hat{T} \times \hat{T} \to \T$.
This simply means that, for all $a,b,c$ in $\hat{T}$,
\[ \beta(ab,c) = \beta(a,c)\beta(b,c) \quad \mbox{ and } \quad \beta(a,bc) = \beta(a,b)\beta(a,c). \]
We will write $\beta$ also for the extension of the bicharacter to a linear map $\C\hat{T} \otimes \C\hat{T} \to \C$. This defines a unitary coquasitriangular structure on $\C[T] = \C\hat{T}$. We want to understand the corresponding transmutation $H_\beta\in\hopf(\C[T],\beta)$.

By definition, the product on $H_\beta$ is determined by
\begin{equation}
\label{eq:trans-prod}
x \cdot_\beta y = \beta(a^{-1}b,c^{-1})xy, \quad x \in H_{a,b},\ y \in H_{c,d},
\end{equation}
the coproduct and counit remain unchanged, while the antipode is determined by
\begin{equation}
\label{eq:trans-antipode}
S_\beta(x) = \beta(a^{-1}b,b)S(x), \quad x \in H_{a,b}.
\end{equation}
In the present setting we may also introduce a $*$-structure on $H_\beta$:
\begin{lemma}
The formula
\begin{equation}\label{eq:trans-star}
x^{*_\beta} = \beta(a^{-1}b,a^{-1})x^*, \quad x \in H_{a,b},
\end{equation}
turns $H_\beta$ into a braided Hopf $*$-algebra.
\end{lemma}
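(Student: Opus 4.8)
The plan is a direct verification organised around the bigrading $H=\bigoplus_{a,b}H_{a,b}$. Since $H_\beta$ is already a braided Hopf algebra over $(\C[T],\beta)$, with coalgebra structure, product $\cdot_\beta$ and antipode $S_\beta$ as in \eqref{eq:trans-prod} and \eqref{eq:trans-antipode}, all that must be added is the $*$-structure together with its compatibilities; concretely, I would check that \eqref{eq:trans-star} defines a conjugate-linear involution which is anti-multiplicative for $\cdot_\beta$, that $\mathrm{ad}_\pi$ then becomes a $*$-homomorphism $H_\beta\to H_\beta\otimes\C[T]$ (so that $H_\beta$ lies in $\mathrm{Alg}^*(\C[T],\beta)$), and that $\Delta\colon H_\beta\to H_\beta\otimes_\beta H_\beta$ and $\eps$ are $*$-homomorphisms. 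The facts I would keep at hand are: $*$ sends $H_{a,b}$ to $H_{a^{-1},b^{-1}}$ (since $\delta_L,\delta_R$ are $*$-homomorphisms and $c^*=c^{-1}$ in $\C\hat T$); $S$ sends $H_{a,b}$ to $H_{b^{-1},a^{-1}}$; multiplication is bigraded, $H_{a,b}H_{c,d}\subset H_{ac,bd}$; $\Delta$ is bigraded as in \eqref{eq:coproduct-homo} and is a $*$-homomorphism on $H$; and the coaction is $\mathrm{ad}_\pi(x)=x\otimes a^{-1}b$ for $x\in H_{a,b}$, so that $x^{(2)*}=ab^{-1}$ for such $x$.

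For the involution: if $x\in H_{a,b}$ then $x^{*_\beta}\in H_{a^{-1},b^{-1}}$, so applying \eqref{eq:trans-star} again gives $(x^{*_\beta})^{*_\beta}=\overline{\beta(a^{-1}b,a^{-1})}\,\beta(ab^{-1},a)\,x$, and since $\overline{\beta(a^{-1}b,a^{-1})}=\beta(ab^{-1},a^{-1})$ the scalar is $\beta(ab^{-1},a^{-1}a)=1$. Anti-multiplicativity reduces, via \eqref{eq:trans-prod} and $xy\in H_{ac,bd}$, to an identity between two products of values of $\beta$ at group elements built from $a,b,c,d$; expanding everything by bicharacterness, both $(x\cdot_\beta y)^{*_\beta}$ and $y^{*_\beta}\cdot_\beta x^{*_\beta}$ turn out to equal $\beta(a^{-1}b,a^{-1})\beta(c^{-1}d,c^{-1})\beta(c^{-1}d,a^{-1})\,y^*x^*$. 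Compatibility with $\mathrm{ad}_\pi$ is then immediate, because $\mathrm{ad}_\pi(x^{*_\beta})=\beta(a^{-1}b,a^{-1})\,x^*\otimes(ab^{-1})$ is exactly $(\mathrm{ad}_\pi x)^*=(x\otimes a^{-1}b)^*$ computed in the ordinary tensor-product $*$-algebra $H_\beta\otimes\C[T]$. Finally $\eps$ is unchanged and vanishes on $H_{a,b}$ unless $a=b$, where $\beta(a^{-1}b,a^{-1})=1$, so $\eps$ is automatically a $*$-map.

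The heart of the matter, and the only place where the braided tensor product genuinely intervenes, is that $\Delta$ is a $*$-homomorphism into $H_\beta\otimes_\beta H_\beta$. Here I would compute $(\Delta x)^*$ straight from \eqref{eq:braided-star}, keeping in mind that the comodule structure on each copy of $H_\beta$ is $\mathrm{ad}_\pi$ and the involution on the factors is $*_\beta$. For $x\in H_{a,b}$, writing $\Delta(x)=\sum_e x_{(1)}^{a,e}\otimes x_{(2)}^{e,b}$, the $e$-th summand of $(\Delta x)^*$ comes out as $\beta(b^{-1}e,e^{-1}a)\,\beta(a^{-1}e,a^{-1})\,\beta(e^{-1}b,e^{-1})$ times $(x_{(1)}^{a,e})^*\otimes(x_{(2)}^{e,b})^*$. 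The step I expect to be the main obstacle — essentially a matter of not mislaying which slot of $\beta$ each element belongs to — is to verify that, after a bicharacter expansion, all the $e$-dependent factors cancel and this scalar collapses to $\beta(a,a)\beta(b,a)^{-1}=\beta(a^{-1}b,a^{-1})$, independently of $e$. Once that is done, using that $\Delta$ is a $*$-homomorphism on $H$ so that $\sum_e(x_{(1)}^{a,e})^*\otimes(x_{(2)}^{e,b})^*=\Delta(x^*)$, one obtains $(\Delta x)^*=\beta(a^{-1}b,a^{-1})\Delta(x^*)=\Delta(x^{*_\beta})$, which finishes the proof. No separate condition on the antipode has to be imposed: the remaining axioms of a braided Hopf $*$-algebra are precisely those already satisfied by $H_\beta$ as a braided Hopf algebra.
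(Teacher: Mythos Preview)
Your proposal is correct and follows essentially the same route as the paper: a direct verification on homogeneous elements, with the only nontrivial step being that $\Delta$ is $*$-preserving for the braided tensor product, which you handle by the same bicharacter cancellation as in the paper (your scalar $\beta(b^{-1}e,e^{-1}a)\beta(a^{-1}e,a^{-1})\beta(e^{-1}b,e^{-1})=\beta(a^{-1}b,a^{-1})$ is exactly the paper's computation with $e$ playing the role of the intermediate index). You are slightly more thorough in also recording the checks for $\mathrm{ad}_\pi$, $\eps$, and the automatic antipode compatibility, which the paper leaves implicit.
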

\begin{proof}
Using that $x^*\in H_{a^{-1},b^{-1}}$ for $x\in H_{a,b}$, it is straightforward to check that $*_\beta$ is involutive. Using formula~\eqref{eq:trans-prod}, it is also easy to see that $(x \cdot_\beta y)^{*_\beta} = y^{*_\beta} \cdot_\beta x^{*_\beta}$. To show that $\Delta\colon H_\beta \to H_\beta \otimes_\beta H_\beta$ is $*$-preserving, notice that if $x \in H_{a,b}$ and $y \in H_{b,c}$, then by~\eqref{eq:braided-star} we have
\begin{align*}
(x \otimes y)^{*_\beta \, \otimes_\beta \, *_\beta}
&= \beta(a^{-1}b,a^{-1})\beta(b^{-1}c,b^{-1})\beta(b^{-1}c,a^{-1}b)\,x^* \otimes y^* \\
&= \beta(a^{-1}c,a^{-1})\, x^* \otimes y^*.
\end{align*}
That $\Delta$ preserves the new $*$-structure is thus a consequence of~\eqref{eq:coproduct-homo}.
\end{proof}

By~\eqref{diag:ad} we thus get the following:
\begin{proposition}
\label{prop:braided-over-quotient}
For any $H$-cocentral closed subgroup $T_0 \subset T$, the transmutation $H_\beta$ can be viewed as an object in $\hopf(\C[T/T_0], i^*\beta)$, where $i\colon\C[T/T_0]\to\C[T]$ is the embedding map.
\end{proposition}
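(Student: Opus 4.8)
The plan is to deduce the statement formally from~\eqref{diag:ad} together with the fact that the embedding $i\colon\C[T/T_0]\to\C[T]$ induces a braided monoidal embedding of comodule categories. First I would observe that corestriction along $i$, namely $(M,\delta_M)\mapsto(M,(\iota\otimes i)\delta_M)$, gives a functor $F\colon\mathcal{M}^{\C[T/T_0]}\to\mathcal{M}^{\C[T]}$ which is strict monoidal and, since $i^*\beta=\beta\circ(i\otimes i)$ and the degrees of objects in the image of $F$ lie in $\widehat{T/T_0}\subset\hat T$, also respects the braidings and the $*$-structures~\eqref{eq:braided-star} on braided tensor products. Moreover $F$ is fully faithful, because $i$ is injective: a $\C[T]$-colinear map between two comodules whose coactions factor through $\C[T/T_0]$ is automatically $\C[T/T_0]$-colinear. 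The essential image of $F$ consists exactly of those $\C[T]$-comodules whose coaction factors through $\C[T/T_0]$.

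Next, by the lemma above $H_\beta$ is an object of $\hopf(\C[T],\beta)$, and by~\eqref{diag:ad} the adjoint coaction $\mathrm{ad}_\pi$ factors through $\C[T/T_0]$, so $H_\beta$ lies in the essential image of $F$; write $H_\beta=F(A)$ for the corresponding $\C[T/T_0]$-comodule $A$. Since $F$ is monoidal it carries $A\otimes_{i^*\beta}A$ (with its $*$-structure) to $H_\beta\otimes_\beta H_\beta$. All the defining data of $H_\beta$ as an object of $\hopf(\C[T],\beta)$ --- the product $H_\beta\otimes_\beta H_\beta\to H_\beta$, the coproduct $\Delta\colon H_\beta\to H_\beta\otimes_\beta H_\beta$, the counit, the antipode $S_\beta$, the unit, and the involution $*_\beta$ (in the sense of compatibility of $\delta$ with the $*$-structures) --- are morphisms, respectively $*$-compatible maps, in $\mathcal{M}^{\C[T]}$; by fullness of $F$ the same is true in $\mathcal{M}^{\C[T/T_0]}$. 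Finally, the braided Hopf $*$-algebra axioms are equalities between such maps built out of these data and the (monoidal, braiding) structure; they hold in $\mathcal{M}^{\C[T]}$, and since $F$ is faithful and preserves all of this structure, they hold already in $\mathcal{M}^{\C[T/T_0]}$. Hence $A$, i.e.\ $H_\beta$ regarded over $\C[T/T_0]$, is an object of $\hopf(\C[T/T_0],i^*\beta)$.

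For a concrete cross-check one can instead argue by inspection: on each homogeneous component $H_{a,b}$ the coaction $\mathrm{ad}_\pi$ reduces to $x\mapsto x\otimes(a^{-1}b)$, and $a^{-1}b\in\widehat{T/T_0}$ by $H$-cocentrality, so the braiding, the braided tensor product~$\otimes_\beta$, the $*$-structure~\eqref{eq:braided-star}, and the formulas~\eqref{eq:trans-prod}, \eqref{eq:trans-antipode}, \eqref{eq:trans-star} only ever evaluate $\beta$ on pairs of elements of $\widehat{T/T_0}$, that is, through $i^*\beta$. I do not expect a genuine obstacle here: the only point needing a little care is the compatibility, under $F$, of the two braided tensor products together with their $*$-structures, and this is precisely what the fact that $F$ is a fully faithful braided monoidal functor delivers.
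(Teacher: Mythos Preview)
Your proposal is correct and is essentially the paper's approach, just spelled out in full categorical detail: the paper proves this proposition in one line, ``By~\eqref{diag:ad} we thus get the following,'' leaving implicit exactly the corestriction-along-$i$ argument you carefully work through. Your elaboration via the fully faithful braided monoidal functor $F$ is a sound way to make that one line rigorous, and your concrete cross-check at the end is in fact closer in spirit to how the paper treats the surrounding material (formulas \eqref{eq:trans-prod}--\eqref{eq:trans-star} are all written in terms of $\beta$ evaluated on elements $a^{-1}b\in\widehat{T/T_0}$).
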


The following is the main result of this section.
\begin{theorem}
\label{thm:main}
Given a bicharacter $\beta\colon\hat T\times\hat T\to\T$, let $J_1, J_2\colon (\hat{T}\times \hat{T}) \times (\hat{T}\times \hat{T}) \to \T$ be the $2$-cocycles defined by
\[J_1((a,b),(c,d)) = \overline{\beta(b,c)}, \quad J_2((a,b), (c,d)) = \overline{\beta(b,cd^{-1})}.\]
Then, for any $H$-cocentral closed subgroup $T_0 \subset T$, we have Hopf $*$-algebra isomorphisms
\begin{align*}
{}_{J_1}((\C[T] \otimes H)^{\Delta(T_0)})_{J_1{}^{-1}} &\cong \C[T/T_0] \# H_\beta,\quad a \otimes x \mapsto a\pi(x_{(1)}) \# x_{(2)}, \\
{}_{J_{2}}(\C[T/T_0] \ltimes H)_{J_2{}^{-1}} &\cong \C[T/T_0] \# H_\beta, \quad a\otimes x\mapsto a\# x.
\end{align*}
\end{theorem}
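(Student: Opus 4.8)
The plan is to deduce both isomorphisms from Theorem~\ref{thm:cocycle-transmutation} and Proposition~\ref{prop:main-untwisted}. Since $\beta$ is a bicharacter, $J_1$ and $J_2$ are bicharacters on $\hat T\times\hat T$, hence unitary Hopf $2$-cocycles on $\C[T]\otimes\C[T]=\C\widehat{T\times T}$. We regard them as unitary Hopf $2$-cocycles on $\C[T]\otimes H$ and on $\C[T/T_0]\ltimes H$ by pulling back along $\iota\otimes\pi$, and on $(\C[T]\otimes H)^{\Delta(T_0)}$ by restriction. Here $\iota\otimes\pi$ is a Hopf $*$-algebra map out of $\C[T]\otimes H$ with its tensor product coalgebra structure, and, because $\pi$ annihilates the off-diagonal part of $H$ (on which the smash coproduct differs from the tensor product coproduct), it remains a coalgebra --- hence Hopf $*$-algebra --- map out of $\C[T/T_0]\ltimes H$ with its smash coproduct.

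For the first isomorphism, evaluating the pullback of $J_1$ on simple tensors $(a\otimes x)\otimes(c\otimes y)$ with $a,c\in\hat T$ gives $\eps_H(y)\,\beta^{-1}(\pi(x),c)$; since $\C[T]$ is cocommutative, this is precisely the $2$-cocycle $J$ of Remark~\ref{rem:cocycle} with $K=\C[T]$ and $R=\beta$. Hence twisting $\C[T]\otimes H$ by $J_1$ yields the Hopf algebra $\C[T]\bowtie_\beta H$, and Theorem~\ref{thm:cocycle-transmutation} provides a Hopf algebra isomorphism ${}_{J_1}(\C[T]\otimes H)_{J_1{}^{-1}}\to\C[T]\# H_\beta$, $a\otimes x\mapsto a\pi(x_{(1)})\# x_{(2)}$. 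Cocycle twisting does not change the coalgebra, and the twisted product of two elements lies in the span of products of their coproducts, so $(\C[T]\otimes H)^{\Delta(T_0)}$ remains a Hopf $*$-subalgebra of the twisted algebra; on it the isomorphism above sends $a\otimes x$ (with $x\in H_{b,c}$) to $ab\# x$, so by~\eqref{eq:coinvariants} its image is exactly the Hopf $*$-subalgebra $\C[T/T_0]\# H_\beta\subset\C[T]\# H_\beta$ (the latter being a Hopf $*$-subalgebra by cocentrality, cf.~\eqref{diag:ad}). This gives the first isomorphism, once we check it respects the $*$-structures.

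That check is the one point not covered by Theorem~\ref{thm:cocycle-transmutation}, which is stated without $*$-structures, and I expect it to be the main computational obstacle: one must match the twisted involution of Proposition~\ref{prop:cocycle-twist} against the bosonization involution built from~\eqref{eq:braided-star} and the transmuted involution~\eqref{eq:trans-star}. The plan is to first compute $u(\xi)=J_1(\xi_{(1)},S(\xi_{(2)}))$ on the bi-homogeneous basis --- $u$ vanishes off the diagonal and $u(a\otimes x)=\eps_H(x)\,\overline{\beta(b,a^{-1})}$ for $x\in H_{b,b}$, with the analogous formula for $u^{-1}$ --- and then to deduce, the two $\eps_H$-factors collapsing through the counit, that $(a\otimes x)^{*_{J_1}}=\beta(b_1b_2^{-1},a^{-1})\,a^{-1}\otimes x^*$ for $x\in H_{b_1,b_2}$. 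Since the isomorphism carries such an $a\otimes x$ to $ab_1\# x$ and $a^{-1}\otimes x^*$ to $a^{-1}b_1^{-1}\# x^*$, while~\eqref{eq:braided-star} and~\eqref{eq:trans-star} give $(ab_1\# x)^*=\beta(b_1b_2^{-1},a^{-1})\,a^{-1}b_1^{-1}\# x^*$, the two sides agree.

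Finally, the second isomorphism is obtained by transporting the first along the Hopf $*$-algebra isomorphism $\Theta\colon(\C[T]\otimes H)^{\Delta(T_0)}\to\C[T/T_0]\ltimes H$ of Proposition~\ref{prop:main-untwisted}. A short computation on bi-homogeneous elements, using $\Theta(a\otimes x)=ab_1\otimes x$ for $x\in H_{b_1,b_2}$ and again that $\pi$ kills the off-diagonal part of $H$, shows $J_2\circ(\Theta\otimes\Theta)=J_1$; hence, by functoriality of the twisting construction of Proposition~\ref{prop:cocycle-twist} under Hopf $*$-algebra isomorphisms, $\Theta$ is a Hopf $*$-algebra isomorphism ${}_{J_1}((\C[T]\otimes H)^{\Delta(T_0)})_{J_1{}^{-1}}\to{}_{J_2}(\C[T/T_0]\ltimes H)_{J_2{}^{-1}}$. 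Composing its inverse with the first isomorphism, and simplifying $\Theta^{-1}(a\otimes x)=ab^{-1}\otimes x$ (for $x\in H_{b,c}$) followed by $ab^{-1}\otimes x\mapsto ab^{-1}\pi(x_{(1)})\# x_{(2)}=a\# x$, gives the second isomorphism with the stated map. Alternatively, one can check directly that the identity of $\C[T/T_0]\otimes H$ intertwines the twisted product $\star_{J_2}$ with the braided tensor product of $\C[T/T_0]\# H_\beta$, the $\eps_H$-factors arising from the pullback of $J_2$ telescoping through the counit in the same way.
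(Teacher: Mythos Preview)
Your proposal is correct and follows essentially the same approach as the paper: identify ${}_{J_1}(\C[T]\otimes H)_{J_1^{-1}}$ with $\C[T]\bowtie_\beta H$ via Remark~\ref{rem:cocycle}, invoke Theorem~\ref{thm:cocycle-transmutation} for the Hopf algebra isomorphism, check the $*$-structures by hand, restrict to the $\Delta(T_0)$-coinvariants using~\eqref{eq:coinvariants}, and then obtain the second isomorphism by transporting along $\Theta$ from Proposition~\ref{prop:main-untwisted}. Your presentation is somewhat more explicit than the paper's---in particular you spell out the relation $J_2\circ(\Theta\otimes\Theta)=J_1$ and the $u$-computation for the twisted involution---but there is no substantive difference in strategy.
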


More pedantically, by restriction $J_1$ defines a $2$-cocycle on $\Delta(T_0)^\perp\subset\hat T\times\hat T$, hence a Hopf $2$-cocycle on $\C[(T\times T)/\Delta(T_0)]$. The latter gives rise to a Hopf $2$-cocycle on $(\C[T] \otimes H)^{\Delta(T_0)}$ using the map $\iota\otimes\pi\colon (\C[T] \otimes H)^{\Delta(T_0)}\to\C[(T\times T)/\Delta(T_0)]$. This is the cocycle we use to define ${}_{J_1}((\C[T] \otimes H)^{\Delta(T_0)})_{J_1{}^{-1}}$. Similarly, $J_2$ defines a Hopf $2$-cocycle on $\C[T/T_0] \ltimes H$.

\begin{proof}[Proof of Theorem~\ref{thm:main}.]
We note that ${}_{J_1}(\C[T] \otimes H)_{{J_1}{}^{-1}}= \C[T] \bowtie_\beta H$, where the latter Hopf algebra is defined as in Theorem~\ref{thm:cocycle-transmutation}. By that theorem, this immediately gives the first Hopf algebra isomorphism for trivial $T_0$. The isomorphism is readily verified to be $*$-preserving, using that by~\eqref{eq:braided-star} and Definition~\ref{def:bos} the involution on $\C[T] \# H_\beta$ is given by
$$
(a\# x)^*=\beta(b^{-1}c,a)\,a^{-1}\# x^{*_\beta}=\beta(b^{-1}c,ab^{-1})\,a^{-1}\# x^*,\quad a\in\hat T,\ x\in H_{b,c},
$$
while by Proposition~\ref{prop:cocycle-twist} the involution on ${}_{J_1}(\C[T] \otimes H)_{J_1{}^{-1}}$ is given by
$$
(a\otimes x)^{*_{J_1}}=\beta(b^{-1}c,a)\,a^{-1}\otimes x^*,\quad a\in\hat T,\ x\in H_{b,c}.
$$

For the same reason as in Proposition~\ref{prop:main-untwisted},  for every $H$-cocentral closed subgroup $T_0\subset T$, the isomorphism  ${}_{J_1}(\C[T] \otimes H)_{J_1{}^{-1}} \cong \C[T] \# H_\beta$ defines by restriction the first isomorphism in the formulation of the theorem. The second isomorphism follows from this and again Proposition~\ref{prop:main-untwisted}.
\end{proof}

It is well-known that $2$-cocycle twisting preserves the monoidal categories of comodules. We thus get the following:
\begin{corollary}
\label{cor:monoidal}
For any $H$-cocentral closed subgroup $T_0 \subset T$, the category $\mathcal{M}^{\C[T/T_0] \# H_\beta}$ is monoidally equivalent to $\mathcal{M}^{\C[T/T_0] \ltimes H}$.
\end{corollary}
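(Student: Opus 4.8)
The plan is to reduce everything to the well-known fact, recalled just before the statement, that twisting a Hopf ($*$-)algebra by a Hopf $2$-cocycle induces a monoidal equivalence of the categories of comodules, and to feed into it the second isomorphism of Theorem~\ref{thm:main}. First I would observe that, by that isomorphism, $\C[T/T_0]\#H_\beta$ is isomorphic as a Hopf $*$-algebra — hence in particular as a Hopf algebra — to the twist ${}_{J_2}(\C[T/T_0]\ltimes H)_{J_2{}^{-1}}$, where $J_2$ is the Hopf $2$-cocycle on $\C[T/T_0]\ltimes H$ described after Theorem~\ref{thm:main}. Any isomorphism of Hopf algebras induces an isomorphism of the monoidal categories of comodules, so
\[
\mathcal{M}^{\C[T/T_0]\#H_\beta}\;\cong\;\mathcal{M}^{{}_{J_2}(\C[T/T_0]\ltimes H)_{J_2{}^{-1}}}
\]
monoidally, and it remains to identify the right-hand side, up to monoidal equivalence, with $\mathcal{M}^{\C[T/T_0]\ltimes H}$.

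For the last step I would invoke the general construction, applied with $B=\C[T/T_0]\ltimes H$ and $J=J_2$. By Proposition~\ref{prop:cocycle-twist} the Hopf algebra ${}_JB_{J^{-1}}$ has the same underlying coalgebra as $B$, so $\mathcal{M}^{{}_JB_{J^{-1}}}$ and $\mathcal{M}^B$ have literally the same objects and morphisms; what differs is the tensor product, since the coaction on a tensor product of comodules is built from the multiplication of $B$, which has been twisted. One promotes the identity functor to a monoidal equivalence $\mathcal{M}^{{}_JB_{J^{-1}}}\simeq\mathcal{M}^B$ by equipping it with the tensor structure whose component at $(M,N)$ is the linear isomorphism
\[
m\otimes n\;\longmapsto\; J(m^{(2)},n^{(2)})\,m^{(1)}\otimes n^{(1)}
\]
(or its convolution inverse, depending on the direction chosen). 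The two things to check are that this map intertwines the coaction of ${}_JB_{J^{-1}}$ with that of $B$ on the tensor product — which is exactly the compatibility of $J$ with the twisted versus untwisted multiplication — and that the family of these maps satisfies the coherence condition turning the identity functor into a monoidal functor, which unwinds precisely to the Hopf $2$-cocycle equation for $J$. In the $*$-setting one moreover uses that $J_2$ is \emph{unitary} to conclude that these maps are unitary on unitary comodules, so the equivalence also restricts to the subcategories of unitary comodules.

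I do not expect a genuine obstacle: the substantive content is entirely in Theorem~\ref{thm:main}, and the remaining ingredient is the classical cocycle-twist equivalence (see \cite[Section~2.3]{Majid}), whose verification is a routine manipulation of convolution and $2$-cocycle identities. The only point deserving a little care, if one wants an equivalence of $*$-categories rather than merely of comodule categories, is the compatibility with the $*$-structures, which follows from the unitarity of $J_2$ together with the description of the twisted involution in Proposition~\ref{prop:cocycle-twist} (cf.\ \cite[Example~2.3.9]{NT13}). Altogether, the corollary follows at once from Theorem~\ref{thm:main}.
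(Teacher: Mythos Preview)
Your proposal is correct and follows exactly the same approach as the paper: the paper simply notes, immediately before the corollary, that cocycle twisting preserves the monoidal category of comodules, and then deduces the result from Theorem~\ref{thm:main}. Your write-up is in fact more detailed than the paper's, which omits the explicit tensor-structure maps and the $*$-compatibility remark.
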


\begin{remark}\label{rem:monoidal-equivalence}
The cocentral homomorphism $q\pi\colon H\to\C[T_0]$ defines a $\hat T_0$-grading on the category~$\mathcal{M}^H_f$ of finite dimensional comodules. Then $\mathcal{M}^{\C[T]\otimes H}_f\cong\operatorname{Vect}_f^{\hat T}\boxtimes\mathcal{M}^H_f$, where $\operatorname{Vect}_f^{\hat T}$ is the category of finite dimensional $\hat T$-graded vector spaces, is bi-graded by $\hat T\times\hat T_0$. We can conclude that $\mathcal{M}^{\C[T/T_0] \# H_\beta}_f$ is monoidally equivalent to the subcategory of $\operatorname{Vect}_f^{\hat T}\boxtimes\mathcal{M}^H_f$ generated by the homogeneous components of bi-degree $(a,b)$ such that $q(a)b=1$.\ee
\end{remark}

From Theorems~\ref{thm:cocycle-transmutation} or~\ref{thm:main} we see that we have a Hopf $*$-algebra inclusion
\begin{equation}
\label{eq:H-in-bos}
\phi\colon H \to \C[T] \# H_\beta, \quad \phi(x) = \pi(x_{(1)})\# x_{(2)}.
\end{equation}
This map induces a monoidal functor from the category $H$-comodules to the category of $H_\beta$-comodules. It will be convenient to have the following description of this functor.

\begin{lemma}
\label{lem:braided-reps}
Let $\delta\colon M \to M \otimes H$ be an $H$-comodule. Then $(M, (\iota\otimes \pi)\delta, \delta)$ is an $H_\beta$-comodule.
\end{lemma}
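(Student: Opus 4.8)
The plan is to verify directly that the triple $(M, (\iota\otimes\pi)\delta, \delta)$ satisfies the definition of an $H_\beta$-comodule, which by the conventions set up before Proposition~\ref{prop:braided-comodule} means three things: (i) $(M,(\iota\otimes\pi)\delta)$ is a $\C[T]$-comodule; (ii) $\delta\colon M\to M\otimes H$ is a morphism of $\C[T]$-comodules, where $M$ carries the $\C[T]$-coaction $(\iota\otimes\pi)\delta$ and $H_\beta$ carries $\mathrm{ad}_\pi$; and (iii) $\delta$ makes $M$ a comodule for the coalgebra underlying $H_\beta$. Point (i) is immediate since $\pi$ is a Hopf algebra map, so $(\iota\otimes\pi)\delta$ is the composition of the $H$-coaction with a coalgebra map. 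Point (iii) is also immediate: the coalgebra structure of $H_\beta$ is literally that of $H$ (only the product, antipode, and $*$ are twisted), so $\delta$ being an $H$-comodule map is exactly what is needed.

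The content is therefore in point (ii), the compatibility of $\delta$ with the $\C[T]$-coactions. Concretely, decompose $M$ into bigraded pieces: since $(\iota\otimes\pi)\delta$ arises from an $H$-comodule, we can write $M=\bigoplus_{a,b}M_{a,b}$ in the same style as $H=\bigoplus_{a,b}H_{a,b}$, where $\delta(m)\in\bigoplus_{a,b,c}M_{a,b}\otimes H_{b,c}$ for $m\in M_{a,c}$ — this is the exact analogue of~\eqref{eq:coproduct-homo} and follows from coassociativity of $\delta$ together with the definition of the $\C[T]$-coaction $(\iota\otimes\pi)\delta$. The $\C[T/T_0]$-coaction on $H_\beta$ that we must match is $\mathrm{ad}_\pi$, which sends $x\in H_{b,c}$ to $x\otimes b^{-1}c$. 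So the requirement is that the diagram expressing $\delta$-equivariance commutes: applying $(\iota\otimes\pi)\delta$ after $\delta$ on the $M$-leg should agree, up to the coaction on the $H$-leg, with the formula $m\mapsto \delta(m)^{(1)}\otimes b^{-1}c$ on a homogeneous element mapping into $M_{a,b}\otimes H_{b,c}$. Tracking the gradings: for $m\in M_{a,c}$ the term of $\delta(m)$ in $M_{a,b}\otimes H_{b,c}$ has its $M$-component in degree-left $a$, degree-right $b$, so its $\C[T]$-coaction (applied to the $M$-leg) is $a\otimes -$ on the left and $-\otimes b^{-1}c$ won't appear — rather the check is that the two natural ways of producing a $\C[T]$-coaction on $\delta(m)\in M\otimes H_\beta$, one via the $M$-coaction and one via $\mathrm{ad}_\pi$ on the $H_\beta$-factor, both give the total degree $a\otimes(b^{-1}c)(b)=a\otimes c$ relative to $m\in M_{a,c}$, hence are compatible. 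This is a routine bookkeeping verification with the bigrading, entirely parallel to the computation~\eqref{eq:coinvariants} and to the proof of Proposition~\ref{prop:main-untwisted}.

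I expect the only mild subtlety — the ``hard part'', though it is not hard — to be keeping the direction of the coactions and the placement of the bicharacter straight: the $*$-structure and product on $H_\beta$ carry factors of $\beta$, but since a comodule only sees the coalgebra structure of $H_\beta$, none of those $\beta$-factors enter, and one must resist the temptation to insert them. Once the bigrading is set up, the verification of (ii) is a one-line check on homogeneous elements, and (i), (iii) are formal. Alternatively, and perhaps more cleanly, one can phrase the whole statement as a consequence of Proposition~\ref{prop:braided-comodule} applied to the Hopf $*$-algebra inclusion $\phi\colon H\to\C[T]\#H_\beta$ of~\eqref{eq:H-in-bos}: pulling back an $H$-comodule $(M,\delta)$ along $\phi$ gives a $\C[T]\#H_\beta$-comodule, namely $(M,(\phi\otimes\iota)\delta)=(M,((\iota\otimes\pi)\delta\otimes\iota)\delta)$ after reindexing, and then Proposition~\ref{prop:braided-comodule} identifies this with an $H_\beta$-comodule whose underlying $\C[T]$-coaction and $H_\beta$-coaction are exactly $(\iota\otimes\pi)\delta$ and $\delta$. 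I would present the short bigrading computation as the main argument and mention the $\phi$-pullback description as the conceptual reason.
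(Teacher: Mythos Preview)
Your alternative approach---pushing the $H$-comodule along $\phi\colon H\to\C[T]\#H_\beta$ and then invoking Proposition~\ref{prop:braided-comodule}---is precisely the paper's proof: it sets $\delta'=(\iota\otimes\phi)\delta$, computes $(\iota\otimes(\iota\#\varepsilon))\delta'=(\iota\otimes\pi)\delta$ and $(\iota\otimes(\varepsilon\#\iota))\delta'=\delta$, and concludes. (Two small slips in your description: it should be $(\iota\otimes\phi)\delta$, not $(\phi\otimes\iota)\delta$, and this is a push-forward of comodules along $\phi$, not a pull-back.)

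Your proposed main argument, the direct verification of (i)--(iii), is also correct and is a genuinely different route, though more bookkeeping-heavy. One correction there: the comodule $M$ acquires only a \emph{single} $\hat T$-grading from $(\iota\otimes\pi)\delta$, not a bigrading; the analogue of~\eqref{eq:coproduct-homo} reads $\delta(M_c)\subset\bigoplus_b M_b\otimes H_{b,c}$, and then the check of (ii) is that each summand has total $\C[T]$-degree $b\cdot b^{-1}c=c$ under the tensor-product coaction on $M\otimes H_\beta$. With that adjustment your sketch goes through. The $\phi$-argument is shorter and is what the paper chose; your direct argument has the advantage of not relying on the isomorphism of Theorem~\ref{thm:cocycle-transmutation} hidden behind~\eqref{eq:H-in-bos}.
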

\begin{proof}
Write $\delta' = (\iota\otimes\phi)\delta$. Then
\[(\iota \otimes (\iota\# \varepsilon))\delta' = (\iota\otimes(\pi\otimes \varepsilon)\Delta)\delta = (\iota\otimes\pi)\delta, \]
and
\[ (\iota\otimes (\varepsilon\# \iota))\delta' = (\iota\otimes(\varepsilon\pi\otimes \iota)\Delta)\delta = \delta.\]
Hence, by Proposition \ref{prop:braided-comodule}, the claim follows.
\end{proof}

\subsection{Another view on \texorpdfstring{$H_\beta$}{Hb}}
Motivated by the recent of work of Bochniak and Sitarz~\cite{BS}, we now give another interpretation of the structure maps for $H_\beta$.

Using the left and right coactions of $\C[T]$ on $H$, we can view $H$ as a $\C[T\times T]$-comodule algebra. Then the new product $\cdot_\beta$ on $H_\beta$ is obtained by cocycle twisting (see~\cite[Section~2.3]{Majid}) the original product by the $2$-cocycle
$$
(\hat{T}\times \hat{T}) \times (\hat{T}\times \hat{T}) \to \T,\quad((a,b),(c,d))\mapsto \beta(a^{-1}b,c^{-1}).
$$

On the other hand, $H$ is also a $\C[T]$-comodule coalgebra, so its coalgebra structure can be twisted by a $2$-cocycle $\omega\in Z^2(\hat T;\T)$:
$$
\Delta_\omega(x)=\sum_c\omega(a^{-1}c,c^{-1}b)x^{a,c}_{(1)}\otimes x^{c,b}_{(2)},\quad x\in H_{a,b}.
$$
As a consequence of the following lemma, this always gives an isomorphic comodule coalgebra.

\begin{lemma}
Assume $\omega\in Z^2(\hat T;\T)$ is a normalized cocycle (so $\omega(1,1)=1$) and $\gamma\colon\hat T\times\hat T\to\T$ is a function. Then the identity
\begin{equation}\label{eq:cobound1}
\omega(a^{-1}c,c^{-1}b)=\gamma(a,c)\gamma(c,b)\gamma(a,b)^{-1}
\end{equation}
holds for all $a,b,c\in\hat T$ if and only if
\begin{equation}\label{eq:cobound2}
\gamma(a,b)=\omega(a^{-1},b)^{-1}f(a)g(b)
\end{equation}
for all $a,b\in\T$, where $f,g\colon\hat T\to\T$ are arbitrary functions such that
$$
f(a)g(a)=\omega(a,a^{-1}),\quad a\in \hat T.
$$
\end{lemma}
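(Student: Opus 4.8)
The plan is to read \eqref{eq:cobound1} as a coboundary identity in the simplicial cochain complex on the vertex set $\hat T$, to produce one explicit solution $\gamma$, and then to describe all of them; the normalization consequences $\omega(1,x)=\omega(x,1)=1$ and $\omega(a,a^{-1})=\omega(a^{-1},a)$ of the $2$-cocycle identity $\omega(x,y)\omega(xy,z)=\omega(y,z)\omega(x,yz)$ will be used throughout. First I would observe that, for fixed $c$, the pair $(a^{-1}c,c^{-1}b)$ runs over all of $\hat T\times\hat T$ as $a,b$ vary, so \eqref{eq:cobound1} is literally the statement that the ``coboundary'' $\gamma(a,c)\gamma(c,b)\gamma(a,b)^{-1}$ of the edge-function $\gamma$ over the triangle with vertices $a,c,b$ equals $\omega(a^{-1}c,c^{-1}b)$. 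Applying the cocycle identity with $(x,y,z)=(a,\,a^{-1}c,\,c^{-1}b)$ gives $\omega(a,a^{-1}c)\,\omega(c,c^{-1}b)=\omega(a^{-1}c,c^{-1}b)\,\omega(a,a^{-1}b)$, which says precisely that $\gamma_0(a,b):=\omega(a,a^{-1}b)$ is one solution of \eqref{eq:cobound1}.

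Next I would classify all solutions. If $\gamma$ is another solution, then $\rho:=\gamma\gamma_0^{-1}$ satisfies $\rho(a,c)\rho(c,b)=\rho(a,b)$ for all $a,b,c$ (divide the two coboundary identities). Fixing a base point $a_0$ and putting $\lambda(x):=\rho(a_0,x)$, this forces $\rho(a,b)=\lambda(b)\lambda(a)^{-1}$; conversely, every function of this form leaves the coboundary unchanged. Hence \eqref{eq:cobound1} holds if and only if
\[
\gamma(a,b)=\omega(a,a^{-1}b)\,\lambda(b)\lambda(a)^{-1}
\]
for some $\lambda\colon\hat T\to\T$.

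Finally I would convert this into the asserted form. The cocycle identity with $(x,y,z)=(a,a^{-1},b)$ together with $\omega(1,b)=1$ gives $\omega(a^{-1},b)\,\omega(a,a^{-1}b)=\omega(a,a^{-1})$, i.e.\ $\omega(a,a^{-1}b)=\omega(a^{-1},b)^{-1}\omega(a,a^{-1})$; substituting into the displayed formula yields $\gamma(a,b)=\omega(a^{-1},b)^{-1}\,\big(\omega(a,a^{-1})\lambda(a)^{-1}\big)\,\lambda(b)$, which is \eqref{eq:cobound2} with $f(a):=\omega(a,a^{-1})\lambda(a)^{-1}$ and $g(b):=\lambda(b)$, and these indeed satisfy $f(a)g(a)=\omega(a,a^{-1})$. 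Conversely, given any $f,g$ with $f(a)g(a)=\omega(a,a^{-1})$, one sets $\lambda:=g$ and runs the computation backwards. I do not expect a genuine obstacle; the only delicate point is that $\omega(a^{-1},b)^{-1}$ by itself is \emph{not} a solution of \eqref{eq:cobound1} — it differs from $\gamma_0$ by the one-variable factor $\omega(a,a^{-1})$, which is not a $1$-coboundary of the form $\lambda(b)\lambda(a)^{-1}$ — so one must choose the substitutions in the two applications of the cocycle identity exactly so as to isolate that factor, which is precisely what the constraint $f(a)g(a)=\omega(a,a^{-1})$ is there to absorb.
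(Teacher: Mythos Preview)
Your proof is correct but follows a genuinely different route from the paper's. The paper proceeds by direct specialization: setting $c=1$ in \eqref{eq:cobound1} immediately yields $\omega(a^{-1},b)=\gamma(a,1)\gamma(1,b)\gamma(a,b)^{-1}$, so one reads off $f(a)=\gamma(a,1)$ and $g(b)=\gamma(1,b)$; the constraint $f(a)g(a)=\omega(a,a^{-1})$ is then checked by first noting (via $a=b=c$) that $\gamma(a,a)=1$. For the converse the paper substitutes \eqref{eq:cobound2} into the right-hand side of \eqref{eq:cobound1} and simplifies by two applications of the cocycle identity, arriving at the equivalent condition $f(c)g(c)=\omega(c,c^{-1})$.

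You instead interpret \eqref{eq:cobound1} cohomologically: one cocycle substitution produces an explicit solution $\gamma_0(a,b)=\omega(a,a^{-1}b)$, the general solution is then $\gamma_0$ times a ``$1$-coboundary'' $\lambda(b)\lambda(a)^{-1}$, and a second cocycle substitution rewrites $\gamma_0(a,b)=\omega(a^{-1},b)^{-1}\omega(a,a^{-1})$, from which \eqref{eq:cobound2} and the constraint on $f,g$ emerge. The paper's argument is shorter and entirely mechanical; yours is more structural and makes transparent why the solution set is a torsor under the one-parameter family $\lambda$ (equivalently, under the choice of splitting $\omega(a,a^{-1})=f(a)g(a)$), which is exactly the content of the word ``arbitrary'' in the statement.
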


\bp
Assume~\eqref{eq:cobound1} holds. Letting $c=1$ we get
$$
\omega(a^{-1},b)=\gamma(a,1)\gamma(1,b)\gamma(a,b)^{-1}.
$$
Therefore~\eqref{eq:cobound2} holds with $f(a)=\gamma(a,1)$ and $g(b)=\gamma(1,b)$. The identity $f(a)g(a)=\omega(a,a^{-1})$ is satisfied by~\eqref{eq:cobound2}, since by letting $a=b=c$ in~\eqref{eq:cobound1} we see that $\gamma(a,a)=1$ (recall also that $\omega(a,a^{-1})=\omega(a^{-1},a)$ by the cocycle identity).

Conversely, assume~\eqref{eq:cobound2} holds for some functions $f,g\colon\hat T\to\T$. Then
$$
\gamma(a,c)\gamma(c,b)\gamma(a,b)^{-1}=\omega(a^{-1},c)^{-1}\omega(c^{-1},b)^{-1}\omega(a^{-1},b)f(c)g(c).
$$
Therefore~\eqref{eq:cobound1} holds if and only if
$$
\omega(a^{-1},c)\omega(a^{-1}c,c^{-1}b)\omega(c^{-1},b)=\omega(a^{-1},b)f(c)g(c).
$$
Using the cocycle identity twice, the left hand side equals
$$
\omega(a^{-1},b)\omega(c,c^{-1}b)\omega(c^{-1},b)=\omega(a^{-1},b)\omega(c,c^{-1})\omega(1,b)=\omega(a^{-1},b)\omega(c,c^{-1}),
$$
where the last identity holds, since $\omega$ is normalized. Thus, identity~\eqref{eq:cobound1} holds if and only if $f(c)g(c)=\omega(c,c^{-1})$.
\ep

From this we see that up to an isomorphism $H_\beta$ can be obtained in many different ways by simultaneously twisting the product and coproduct on $H$:

\begin{proposition}
Given a bicharacter $\beta\colon\hat T\times\hat T\to\T$ and a normalized $2$-cocycle $\omega\in Z^2(\hat T;\T)$, choose a function $\gamma\colon\hat T\times\hat T\to\T$ satisfying~\eqref{eq:cobound1} and define a $2$-cocycle $\Omega\in Z^2(\hat T\times\hat T;\T)$ by
$$
\Omega((a,b),(c,d))=\beta(a^{-1}b,c^{-1})\gamma(a,b)^{-1}\gamma(c,d)^{-1}\gamma(ac,bd).
$$
Then we can define new product, coproduct and involution on the $\C[T]$-comodule $H$ by
$$
x\cdot_\Omega y=\Omega((a,b),(c,d))xy,\quad x\in H_{a,b},\ y\in H_{c,d},
$$
$$
\Delta_\omega(x)=\sum_c\omega(a^{-1}c,c^{-1}b)x^{a,c}_{(1)}\otimes x^{c,b}_{(2)},\quad x\in H_{a,b},
$$
$$
x^\star=\overline{\Omega((a,b),(a,b)^{-1})}x^*=\beta(a^{-1}b,a^{-1})\gamma(a,b)\gamma(a^{-1},b^{-1})x^*,\quad x\in H_{a,b},
$$
to get a braided Hopf $*$-algebra $H_{\Omega,\omega}\in\hopf(\C[T],\beta)$. We have an isomorphism
$$
H_\beta\cong H_{\Omega,\omega},\quad H_{a,b}\ni x\mapsto \gamma(a,b)x.
$$
\end{proposition}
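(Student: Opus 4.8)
The plan is to obtain $H_{\Omega,\omega}$ from $H_\beta$ by \emph{transport of structure} along a graded rescaling. Let $\Phi\colon H\to H$ be the linear automorphism that multiplies $H_{a,b}$ by $\gamma(a,b)$, so that $\Phi^{-1}$ multiplies $H_{a,b}$ by $\gamma(a,b)^{-1}$. Since each $H_{a,b}$ sits inside a single homogeneous component of $H$ for the coaction $\mathrm{ad}_\pi$ (the one of degree $a^{-1}b$), $\Phi$ acts as a scalar on each such component and is therefore an automorphism of $H_\beta$ in the category $\mathcal{M}^{\C[T]}$. I would then simply \emph{define} $H_{\Omega,\omega}$ to be $H_\beta$ with its product, unit, coproduct, counit, antipode and involution all transported along $\Phi$. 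Because the axioms for a Hopf $*$-algebra object in $(\mathcal{M}^{\C[T]},\beta)$ are commuting diagrams inside $\mathcal{M}^{\C[T]}$, and because $\Phi$ — and hence $\Phi\otimes\Phi$, by monoidality of $\otimes_\beta$ — is an isomorphism in that category, the transported data automatically makes $H_{\Omega,\omega}$ a braided Hopf $*$-algebra in $\hopf(\C[T],\beta)$ for which $\Phi\colon H_\beta\to H_{\Omega,\omega}$ is an isomorphism by construction. What is left is to match the transported structure maps with the explicit formulas in the statement.

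This matching is a bookkeeping computation relying on three ingredients: (i) the behaviour of the $\hat{T}\times\hat{T}$-bigrading under the Hopf operations — $H_{a,b}H_{c,d}\subseteq H_{ac,bd}$, $\Delta(H_{a,b})\subseteq\bigoplus_c H_{a,c}\otimes H_{c,b}$ (this is~\eqref{eq:coproduct-homo}), $H_{a,b}^*\subseteq H_{a^{-1},b^{-1}}$, and $S(H_{a,b})\subseteq H_{b^{-1},a^{-1}}$ (the last deduced from $\Delta S=(S\otimes S)\Delta^{\mathrm{op}}$, $\pi S=S_{\C\hat{T}}\pi$ and the counit axioms); (ii) the transmutation formulas~\eqref{eq:trans-prod}, \eqref{eq:trans-antipode} and~\eqref{eq:trans-star} for $H_\beta$; and (iii) the identity~\eqref{eq:cobound1} defining $\gamma$, together with $\gamma(a,a)=1$, obtained by setting $a=b=c$ in~\eqref{eq:cobound1}. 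For $x\in H_{a,b}$ and $y\in H_{c,d}$ one computes the transported product to be
$$\Phi\bigl(\Phi^{-1}(x)\cdot_\beta\Phi^{-1}(y)\bigr)=\gamma(a,b)^{-1}\gamma(c,d)^{-1}\gamma(ac,bd)\,\beta(a^{-1}b,c^{-1})\,xy=\Omega((a,b),(c,d))\,xy,$$
which is precisely $\cdot_\Omega$, and the transported coproduct to be
$$(\Phi\otimes\Phi)\,\Delta\bigl(\Phi^{-1}(x)\bigr)=\sum_c\gamma(a,c)\gamma(c,b)\gamma(a,b)^{-1}\,x_{(1)}^{a,c}\otimes x_{(2)}^{c,b}=\Delta_\omega(x),$$
where the last equality is~\eqref{eq:cobound1}.

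For the involution, \eqref{eq:trans-star} and $H_{a,b}^*\subseteq H_{a^{-1},b^{-1}}$ give $\Phi\bigl((\Phi^{-1}(x))^{*_\beta}\bigr)=\gamma(a,b)\gamma(a^{-1},b^{-1})\beta(a^{-1}b,a^{-1})\,x^*$ for $x\in H_{a,b}$, and a short manipulation using $\gamma(1,1)=1$ and $\beta(g,h)^{-1}=\beta(g,h^{-1})$ identifies this scalar with $\overline{\Omega((a,b),(a,b)^{-1})}$. The transported counit is still $\eps$, and $\Phi$ intertwines it because $\eps$ is supported on $\bigoplus_a H_{a,a}$ (from $\pi(x)=\eps(x)a=\eps(x)b$ for $x\in H_{a,b}$ one gets $a=b$ once $\eps(x)\ne0$) and there $\gamma(a,a)=1$; the transported antipode comes out to $S_{\Omega,\omega}(x)=\gamma(a,b)^{-1}\gamma(b^{-1},a^{-1})\beta(a^{-1}b,b)S(x)$ for $x\in H_{a,b}$, checked in the same way.

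Finally, $\Omega$ is genuinely a $2$-cocycle on $\hat{T}\times\hat{T}$: it factors as the product of the $2$-cocycle $((a,b),(c,d))\mapsto\beta(a^{-1}b,c^{-1})$ — the one used to twist the product in passing from $H$ to $H_\beta$ — with the coboundary of the $1$-cochain $(a,b)\mapsto\gamma(a,b)^{-1}$, namely $((a,b),(c,d))\mapsto\gamma(a,b)^{-1}\gamma(c,d)^{-1}\gamma(ac,bd)$; in any event this cocycle property is already implied by the transport-of-structure argument, which delivers associativity of $\cdot_\Omega$ for free. The only point demanding genuine care throughout is the grading bookkeeping — one must track the full $\hat{T}\times\hat{T}$-bigrading, and not merely the $\hat{T}$-grading by $\mathrm{ad}_\pi$, through the product, antipode and $*$ — together with reconciling the conjugate-linearity of $*$ with the $\T$-valued rescaling by $\gamma$; everything else is routine.
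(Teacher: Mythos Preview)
Your proof is correct and fills in details the paper leaves implicit: the paper states the proposition without a proof block, regarding it as a direct consequence of the preceding lemma characterizing~$\gamma$ through~\eqref{eq:cobound1}--\eqref{eq:cobound2}. Your transport-of-structure argument along the graded rescaling~$\Phi$ is precisely the mechanism the paper has in mind, and the verifications that the transported product, coproduct, counit and involution match the displayed formulas for $H_{\Omega,\omega}$ are accurate.

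One small imprecision worth noting: $\Phi$ does not in general act by a \emph{single} scalar on each $\mathrm{ad}_\pi$-homogeneous component --- the component of degree $c\in\hat T$ is $\bigoplus_a H_{a,ac}$, and $\gamma(a,ac)$ depends on $a$. What is true, and all you need, is that $\Phi$ preserves each $H_{a,b}$ and hence preserves the $\mathrm{ad}_\pi$-grading; this already makes $\Phi$ a $\C[T]$-comodule automorphism, so the conclusion and the rest of the argument are unaffected.
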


\begin{example}
Consider $T=\T$ and, identifying $\hat\T$ with $\Z$, let
$$
\beta(m,n)=e^{2i\phi mn},\qquad \omega(m,n)=e^{i\phi mn}
$$
for some $\phi\in\R$. By taking $\gamma(m,n)=\omega(-m,n)^{-1}e^{-i\phi m^2}=e^{i\phi(mn-m^2)}$, we get
$$
\Omega((k,l),(m,n))=e^{i\phi(kn-lm)}.
$$
Then $H_{\Omega,\omega}$ coincides with the braided Hopf algebra defined in~\cite{BS}.
\end{example}

\subsection{Braided compact matrix quantum groups}
In our examples we will mainly be interested in transmutations of compact quantum groups. A compact quantum group $G$ is a Hopf $*$-algebra $\C[G]$ that is spanned (equivalently, generated as an algebra) by matrix coefficients of finite dimensional unitary comodules. We refer the reader to \cite{NT13} for an introduction to the subject and we will often use the terminology there. For instance, a $\C[G]$-comodule will sometimes be called a representation of~$G$.

\smallskip

Recall that fixing a basis in the underlying vector space of an $m$-dimensional $H$-comodule defines a \textit{corepresentation matrix} for $H$. This is a matrix $U = (u_{ij})_{i,j} \in\Mat_m(H)$ such that
\begin{equation}
\label{eq:corepmatrix}
\Delta(u_{ij}) = \sum_k u_{ik} \otimes u_{kj}, \quad \varepsilon(u_{ij}) = \delta_{ij}.
\end{equation}
Conversely any such matrix defines an $m$-dimensional $H$-comodule $\delta_U\colon M \to M \otimes H$, by setting
\begin{equation}
\label{eq:delta_U}
\delta_U(e_j) = \sum_i e_i \otimes u_{ij}
\end{equation}
for a fixed vector space $M$ with basis $(e_i)_i$. If $U$ is unitary, then the conjugate corepresentation matrix is $\bar{U} = (u_{ij}^*)_{i,j}$.

\begin{definition}[\cite{Wor1,Wor2}]
\label{def:matrix}
A \emph{compact matrix quantum group} $G$ is a Hopf $*$-algebra $\C[G]$ with generators $u_{ij}$, $1 \leq i,j \leq m$, such that
\begin{itemize}
\item[(i)]  $U = (u_{ij})_{i,j}$ is a unitary corepresentation matrix;
\item[(ii)] $\bar{U} = (u_{ij}^*)_{i,j}$ is equivalent to a unitary corepresentation matrix.
\end{itemize}
The coproduct $\Delta$, counit $\varepsilon$ and antipode $S$ are then given by
\[ \Delta(u_{ij}) = \sum_k u_{ik} \otimes u_{kj}, \quad \varepsilon(u_{ij}) = \delta_{ij}, \quad S(u_{ij}) = u_{ji}^*. \]
The matrix $U$ is called the \emph{fundamental unitary} for the compact matrix quantum group.
\end{definition}

We remark that this is not the original definition of Woronowicz, but it is equivalent to that by a result of Dijkhuizen and Koornwinder~\cite{DK}, see also~\cite[Section~1.6]{NT13}.

\smallskip

Next, we want to introduce a braided analogue of this definition, but first we need some preparation. Suppose that $A \in \hopf(K,R)$ for a Hopf $*$-algebra $K$ with a unitary coquasitriangular structure $R$. Suppose $U=(u_{ij})_{i,j} \in \Mat_m(A)$ satisfies $\Delta_A(u_{ij})=\sum_ku_{ik}\otimes u_{kj}$, $\eps_A(u_{ij})=\delta_{ij}$. We can still define $\delta_U\colon M \to M \otimes A$ as in~\eqref{eq:delta_U} and this gives a comodule for the coalgebra~$A$. By definition, if $Z=(z_{ij})_{i,j} \in \Mat_m(K)$ is a corepresentation matrix, the triple $(M, \delta_Z, \delta_U)$ defines an $A$-comodule if and only if
\[ (\delta_U\otimes\iota) \circ \delta_Z = \delta_{M\otimes A} \circ \delta_U,\]
where $\delta_{M\otimes A}$ denotes the tensor product comodule in $\mathcal{M}^K$. We have the following characterization:
\begin{lemma}
\label{lem:conjugate}
In the above setting, the pair $(Z,U)$ defines an $A$-comodule if and only if
\begin{equation}
\label{eq:condition}
\delta(u_{ij}) = \sum_{s,t} u_{st} \otimes S(z_{is})z_{tj},
\end{equation}
where $\delta\colon A \to A \otimes K$ is the coaction by $K$ on $A$. Furthermore, the $A$-comodule we thus get is unitary (that is, the corresponding $(K\#A)$-comodule is unitary) if and only if $U$ and $Z$ are unitary, and then the conjugate comodule is given by the pair $(\bar{Z}, \bar{U}_Z)$, where
\begin{equation}\label{eq:barUZ}
\bar{U}_Z = (\bar{u}_{ij}^Z)_{i,j}, \quad \bar{u}_{ij}^Z = \sum_{s,l,t} R(z_{tj}^*z_{sl}, z_{il}^*)u_{st}^*,
\end{equation}
while the antipode on $A$ satisfies $S_A(u_{ij})=u^*_{ji}$.
\end{lemma}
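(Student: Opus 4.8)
The plan is to establish the four claims in turn --- the criterion~\eqref{eq:condition}, the unitarity statement, the formula $S_A(u_{ij})=u_{ji}^*$, and the description of the conjugate --- consistently using Proposition~\ref{prop:braided-comodule} to pass between the $A$-comodule $(M,\delta_Z,\delta_U)$ and the associated $(K\#A)$-comodule. For~\eqref{eq:condition}: by definition $(M,\delta_Z,\delta_U)$ is an $A$-comodule exactly when $\delta_U$ is a morphism of $K$-comodules, i.e.\ $(\delta_U\otimes\iota)\delta_Z=\delta_{M\otimes A}\delta_U$. I would write this out on the basis $(e_i)$ and compare coefficients; it becomes the identity $\sum_s u_{is}\otimes z_{sj}=\sum_k u_{kj}^{(1)}\otimes z_{ik}u_{kj}^{(2)}$ in $A\otimes K$, where $\delta(a)=a^{(1)}\otimes a^{(2)}$. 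Substituting~\eqref{eq:condition} and using $\sum_k z_{ik}S(z_{ks})=\delta_{is}1$ gives one implication; conversely, applying $S(z_{li})\otimes\iota$ to the $K$-leg of the displayed identity, summing over $i$, and using $\sum_i S(z_{li})z_{ik}=\delta_{lk}1$ recovers $\delta(u_{lj})=\sum_{i,s}u_{is}\otimes S(z_{li})z_{sj}$, which is~\eqref{eq:condition}.

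For the remaining claims I would first compute the $(K\#A)$-corepresentation matrix. By Proposition~\ref{prop:braided-comodule} the associated coaction is $(\delta_Z\otimes\iota)\delta_U$, and since $1\in K$ is grouplike --- so its coactions are trivial and $R(1,\cdot)=R(\cdot,1)=\varepsilon$ --- one finds $W=\tilde Z\,\tilde U$, where $\tilde Z=(z_{ij}\#1)_{i,j}$ and $\tilde U=(1\#u_{ij})_{i,j}$. The same kind of computation shows $(k\#1)(k'\#1)=kk'\#1$ and $(1\#a)(1\#a')=1\#aa'$, and~\eqref{eq:braided-star} gives $(k\#1)^*=k^*\#1$ and $(1\#a)^*=1\#a^*$; thus $\tilde Z^*=\widetilde{Z^*}$, $\tilde U^*=\widetilde{U^*}$ with $Z^*=(z_{ji}^*)$, $U^*=(u_{ji}^*)$, and $Z\mapsto\tilde Z$, $U\mapsto\tilde U$ are multiplicative. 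The unitarity statement is then immediate: if $Z,U$ are unitary then $W^*W=\widetilde{U^*}\,\widetilde{Z^*}\,\tilde Z\,\tilde U=\widetilde{U^*U}=I$ and likewise $WW^*=I$; conversely, applying the $*$-algebra projection $\iota\#\varepsilon_A\colon K\#A\to K$ to $W$ yields $Z$, so $Z$ is unitary, whence $\tilde Z^{-1}=\widetilde{Z^*}$ and $\tilde U=\widetilde{Z^*}\,W$ is a product of unitary matrices, forcing $U$ to be unitary because $\tilde U^*\tilde U$ has entries $1\#(U^*U)_{ij}$ and $A\hookrightarrow K\#A$ is injective. The antipode formula then costs nothing more: from $\Delta_A(u_{ij})=\sum_k u_{ik}\otimes u_{kj}$, $\varepsilon_A(u_{ij})=\delta_{ij}$ and the antipode axioms for the braided Hopf algebra $A$ we get $\sum_k S_A(u_{ik})u_{kj}=\sum_k u_{ik}S_A(u_{kj})=\delta_{ij}\un$, so $S_A(U)$ is a two-sided inverse of $U$ in $\Mat_m(A)$, and since $U$ is unitary this forces $S_A(U)=U^*$.

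For the conjugate comodule I would use that the conjugate of the unitary $(K\#A)$-comodule $W$ has corepresentation matrix $\bar W=(w_{ij}^*)$, and read off the corresponding $A$-comodule via the inverse assignment of Proposition~\ref{prop:braided-comodule}, i.e.\ by applying $\iota\#\varepsilon_A$ and $\varepsilon_K\#\iota$ to the entries $w_{ij}^*$; the resulting pair then automatically satisfies~\eqref{eq:condition}, being extracted from a genuine $(K\#A)$-comodule. Expanding $w_{ij}^*=\bigl(\sum_k(z_{ik}\#1)(1\#u_{kj})\bigr)^*$ with~\eqref{eq:braided-star}, substituting~\eqref{eq:condition} for $\delta(u_{kj})$, and using $S(z_{ks})^*=z_{sk}$ (valid because $Z$ is unitary) produces an explicit formula for $w_{ij}^*$ from which $\iota\#\varepsilon_A$ extracts $z_{ij}^*$ --- so the $K$-corepresentation matrix of the conjugate is $\bar Z$ --- and $\varepsilon_K\#\iota$ extracts exactly the matrix $\bar U_Z$ of~\eqref{eq:barUZ}. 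I expect this last computation to be the main obstacle: one has to track the $R$-factors produced by the braided involution~\eqref{eq:braided-star} and verify, via the coquasitriangularity axioms and the antipode and unitarity relations for $Z$, that they collapse to the stated closed form; a mild auxiliary point is that $\iota\#\varepsilon_A$ (the bosonization projection) is genuinely $*$-preserving.
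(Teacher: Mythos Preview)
Your proposal is correct and follows essentially the same route as the paper: the same manipulation with $S(z_{is})$ for~\eqref{eq:condition}, the identification $W=Z\#U$ together with the $*$-homomorphism $\iota\#\varepsilon_A$ for the unitarity claim, the antipode identity for $S_A(u_{ij})=u_{ji}^*$, and reading off $(\bar Z,\bar U_Z)$ from $\bar W$ via Proposition~\ref{prop:braided-comodule}. The only minor addition in the paper is that it also notes an alternative derivation of~\eqref{eq:barUZ} using the explicit antipode formula for the bosonization in Definition~\ref{def:bos}, which avoids tracking the $R$-factors from~\eqref{eq:braided-star} directly.
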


Here by the conjugate $A$-comodule we mean the comodule obtained by taking the conjugate ($K\#A)$-comodule.

\begin{proof}
The condition $(\delta_U\otimes\iota) \circ \delta_Z = \delta_{M\otimes A} \circ \delta_U$ is satisfied if and only if
\[ \sum_t u_{st} \otimes z_{tj} = \sum_k u_{kj}^{(1)} \otimes z_{sk}u_{kj}^{(2)} \]
for $1 \leq s,j \leq m$, where we write $\delta(a) = a^{(1)} \otimes a^{(2)}$, $a \in A$. Multiplying both sides by $S(z_{is})$ and summing over $s$ yields
\[ \sum_{s,t} u_{st} \otimes S(z_{is})z_{tj} = \sum_{k,s} u_{kj}^{(1)} \otimes S(z_{is})z_{sk}u_{kj}^{(2)} = u_{ij}^{(1)} \otimes u_{ij}^{(2)}. \]
This implies the first statement.

For the second one, note that the $A$-comodule defined by $(Z,U)$ corresponds to the $(K \# A)$-comodule given by the matrix
\[ W =(w_{ij})_{i,j}= Z\# U, \quad \text{so}\quad w_{ij} = \sum_k z_{ik} \# u_{kj}.\]
Using that  $\iota\#\eps_A\colon K\#A\to K$ is a $*$-homomorphism, it is easy to see that $W$ is unitary if and only $Z$ and $U$ are unitary.

When $U$ is unitary, the equality $S_A(u_{ij}) = u_{ji}^*$ holds by the antipode identity. The claim about the conjugate comodule follows from the $*$-structure on $K \# A$ and the fact that $\bar{U}_Z = (\varepsilon_K\#\iota)(\bar{W})$. Alternatively, we can use the formula for the antipode in Definition \ref{def:bos} to get
\[ \bar{u}^Z_{ij} = (\varepsilon_K \# \iota)S(w_{ji}) = \sum_r R( S_A(u_{jr})^{(2)}, S_{K}(z_{ri}))S_A(u_{jr})^{(1)}. \]
As $S_A(u_{jr}) = u_{rj}^*$, we recover~\eqref{eq:barUZ}.
\end{proof}

We remark that it is important to keep track of both $R$ and $Z$ in the definition of $\bar{U}_Z$ above. However, we stick to the notation $\bar{U}_Z$ for the rest of the paper, as $R$ will always be given by a fixed bicharacter $\beta$.

\begin{definition}\label{def:braided-matrix}
Let $T$ be a compact abelian group with a fixed unitary corepresentation matrix $Z \in \Mat_m(\C[T])$ and a bicharacter $\beta$ on $\hat{T}$. A \emph{braided compact matrix quantum group} over the triple $(T,Z, \beta)$ is an object $A \in \hopf(\C[T],\beta)$ generated as a $*$-algebra by elements~$u_{ij}$, $1 \leq i,j \leq m$, such that, for $U = (u_{ij})_{i,j}$,
\begin{itemize}
\item[(i)] $(Z,U)$ defines a unitary $A$-comodule;
\item[(ii)] $(\bar{Z}, \bar{U}_Z)$ defines a unitarizable $A$-comodule.
\end{itemize}
We say that $U$ is the \emph{fundamental unitary} for $A$, while the pair $(Z,U)$ is the \emph{fundamental unitary representation}.
\end{definition}

More explicitly, by Lemma~\ref{lem:conjugate}, conditions (i) and (ii) mean that $U\in\Mat_m(A)$ is unitary, there is $F\in\operatorname{GL}_m(\C)$ such that both $F\bar ZF^{-1}$ and $F\bar U_Z F^{-1}$ are unitary, and the structure maps for $A$ satisfy the following properties: the coaction of $\C[T]$ on $A$ is given by~\eqref{eq:condition}, and
$$
\Delta_A(u_{ij})=\sum_ku_{ik}\otimes u_{kj},\qquad \eps_A(u_{ij})=\delta_{ij},\qquad S_A(u_{ij})=u_{ji}^*.
$$

We remark that in view of Lemma~\ref{lem:conjugate} we can similarly  define a braided compact matrix quantum group over $(K,Z,R)$ for any Hopf $*$-algebra $K$ with a unitary coquasitriangular structure $R$ and a unitary corepresentation matrix $Z=(z_{ij})_{i,j} \in \Mat_m(K)$, but the adjective ``compact'' in this generality might be somewhat misleading.

\begin{proposition}\label{prop:bosonizationBMQG}
Given a compact abelian group $T$, a unitary corepresentation matrix $Z \in \Mat_m(\C[T])$ and a bicharacter $\beta$ on $\hat{T}$, the bosonization of any braided compact matrix quantum group~$A$ over $(T,Z, \beta)$ is a compact quantum group.
\end{proposition}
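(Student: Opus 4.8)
The plan is to verify that $\mathcal A=\C[T]\#A$, which is a genuine Hopf $*$-algebra by Definition~\ref{def:bos}, is spanned by matrix coefficients of finite-dimensional unitary corepresentations; by the definition recalled in Section~\ref{s2} this is exactly what it means for $\mathcal A$ to be a compact quantum group.

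First I would turn the fundamental unitary representation into a corepresentation of $\mathcal A$. By Proposition~\ref{prop:braided-comodule} the pair $(Z,U)$ corresponds to a finite-dimensional $\mathcal A$-comodule with corepresentation matrix $W=Z\#U$, $w_{ij}=\sum_k z_{ik}\#u_{kj}$. Since $(Z,U)$ defines a unitary $A$-comodule, Lemma~\ref{lem:conjugate} gives that $Z$ and $U$ are unitary, that $W$ is a unitary corepresentation matrix, and that its conjugate corresponds to the pair $(\bar Z,\bar U_Z)$, which is unitarizable by Definition~\ref{def:braided-matrix}(ii); hence $\bar W$ is unitarizable. Besides $W$, each character $a\in\hat T$ yields a one-dimensional unitary corepresentation $a\#1$ of $\mathcal A$ whose conjugate $a^{-1}\#1$ is again unitary.

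Next I would observe that $\mathcal A$ is generated as a $*$-algebra by the elements $a\#1$ ($a\in\hat T$) together with the $1\#u_{ij}$: indeed $\C[T]\#1$ and $1\#A$ are $*$-subalgebras of $\mathcal A$ isomorphic to $\C[T]$ and to $A$, one has $(h\#1)(1\#a)=h\#a$, so these two subalgebras generate $\mathcal A$, and each is generated as a $*$-algebra by the indicated elements ($\C[T]=\C\hat T$ by the group $\hat T$, and $A$ by the $u_{ij}$ by assumption). The crucial step is to recover the $1\#u_{ij}$ from matrix coefficients of $W$ and characters. Writing $\mathbf Z=(z_{ij}\#1)_{i,j}$ for the image of $Z$ under the canonical $*$-homomorphism $\C[T]\to\mathcal A$, the matrix $\mathbf Z$ is invertible (as $Z$ is unitary over $\C[T]$) with $\mathbf Z^{-1}=(z_{ji}^*\#1)_{i,j}$, and a short computation with the smash-product product shows $W=\mathbf Z\cdot(1\#u_{ij})_{i,j}$; therefore $1\#u_{ij}=\sum_k(z_{ki}^*\#1)\,w_{kj}$.

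Finally, let $\mathcal C\subseteq\mathcal A$ be the linear span of matrix coefficients of all finite-dimensional corepresentations $V$ of $\mathcal A$ for which both $V$ and $\bar V$ are unitarizable. Since tensor products and conjugates of such corepresentations are again of this type, $\mathcal C$ is a unital $*$-subalgebra. By the previous paragraphs $\mathcal C$ contains every $a\#1$, hence all of $\C[T]\#1$, as well as every $w_{ij}$, and hence, by the identity just established, every $1\#u_{ij}$; consequently $\mathcal C$ contains the $*$-subalgebra generated by all these elements, so $\mathcal C=\mathcal A$. As the matrix coefficients of a unitarizable corepresentation lie in the span of those of an equivalent unitary one, this exhibits $\mathcal A$ as spanned by matrix coefficients of finite-dimensional unitary corepresentations, i.e.\ as a compact quantum group (cf.~\cite{NT13}). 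The only computation that is not pure bookkeeping is the identity $1\#u_{ij}=\sum_k(z_{ki}^*\#1)w_{kj}$, which rests on $Z^*Z=1$ in $\Mat_m(\C[T])$ and the definition of the smash product; the point of passing through $\mathcal C$ rather than $W$ alone is that the $w_{ij}$ do not generate $\mathcal A$ by themselves and that the conjugate $\bar W$ is only unitarizable, not unitary.
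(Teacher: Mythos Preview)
Your proof is correct and follows essentially the same strategy as the paper's one-line argument: show that $\C[T]\#A$ is generated as a $*$-algebra by the entries of $W=Z\#U$ together with the characters of~$T$, and then use that $\bar W$ is unitarizable. The only difference is a shortcut: instead of inverting $\mathbf Z$ to obtain $1\#u_{ij}=\sum_k(z_{ki}^*\#1)w_{kj}$, the paper passes to an orthonormal basis in which $Z$ is diagonal (possible since $T$ is abelian), so that $w_{ij}=z_{ii}\#u_{ij}$ and hence $1\#u_{ij}=(z_{ii}^{-1}\#1)w_{ij}$ is immediate---the content is the same.
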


\bp
By working in an orthonormal basis where $Z$ is diagonal, we see that the $*$-algebra $\C[T]\# A$ is generated by the matrix coefficients of the fundamental unitary representation and the characters of $T$.
\ep

\begin{proposition} \label{prop:MQG-trans}
Let $G$ be a compact matrix quantum group with fundamental unitary $U = (u_{ij})^m_{i,j=1}$. Assume that $T$ is a compact abelian group with a Hopf $*$-algebra map $\pi\colon\C[G] \to \C[T]$, and let $\beta$ be a bicharacter on $\hat{T}$. Then the transmutation $\C[G]_\beta$ is a braided compact matrix quantum group over $(T, \pi(U), \beta)$ with fundamental unitary $U$.
\end{proposition}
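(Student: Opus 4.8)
I plan to verify, for $A=\C[G]_\beta$ and $Z=\pi(U)$, the three requirements of Definition~\ref{def:braided-matrix}: that the $u_{ij}$ generate $A$ as a $*$-algebra, that $(Z,U)$ defines a unitary $A$-comodule, and that $(\bar Z,\bar U_Z)$ defines a unitarizable one. First I record the preliminaries. By Proposition~\ref{prop:transmutation} and~\eqref{eq:trans-star} we have $A=\C[G]_\beta\in\hopf(\C[T],\beta)$, and since the coalgebra structure of $A$ is that of $\C[G]$, the matrix $U$ still satisfies $\Delta(u_{ij})=\sum_k u_{ik}\otimes u_{kj}$ and $\eps(u_{ij})=\delta_{ij}$ in $A$; moreover $Z=\pi(U)$ is a unitary corepresentation matrix of $\C[T]$ because $U$ is one for $\C[G]$ and $\pi$ is a Hopf $*$-algebra map. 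As $Z$ is a unitary corepresentation matrix of the commutative Hopf $*$-algebra $\C[T]=\C\hat T$, after replacing $U$ by $VUV^{-1}$ for a suitable unitary matrix $V$ (which replaces $Z$ by $VZV^{-1}$ and, being a change of basis in the candidate fundamental comodule together with an invertible linear change of the generators, leaves the conditions of Definition~\ref{def:braided-matrix} unaffected), I may assume $Z=\diag(a_1,\dots,a_m)$ with $a_i\in\hat T$, so that $u_{ij}\in\C[G]_{a_i,a_j}$. Generation is then clear: $\C[G]$ is spanned by $*$-monomials in the $u_{ij}$ by Definition~\ref{def:matrix}, each such monomial is bihomogeneous, and on bihomogeneous elements $\cdot_\beta$ and $*_\beta$ differ from $\cdot$ and $*$ only by scalar factors by~\eqref{eq:trans-prod} and~\eqref{eq:trans-star}; hence every $*$-monomial is a scalar multiple of a $*_\beta$-monomial in the $u_{ij}$.

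For condition~(i), by Lemma~\ref{lem:braided-reps} the pair $(Z,U)$ with $Z=\pi(U)$ defines a $\C[G]_\beta$-comodule, so~\eqref{eq:condition} holds. By Lemma~\ref{lem:conjugate} the corresponding $(\C[T]\#\C[G]_\beta)$-comodule has corepresentation matrix $Z\#U$, whose $(i,j)$-entry equals $\sum_k\pi(u_{ik})\#u_{kj}=\phi(u_{ij})$, where $\phi\colon\C[G]\to\C[T]\#\C[G]_\beta$ is the Hopf $*$-algebra inclusion of~\eqref{eq:H-in-bos}. Since $\phi$ is a morphism of Hopf $*$-algebras and $U$ is unitary in $\C[G]$ (being the fundamental unitary of the compact matrix quantum group $G$), the matrix $\phi(U)$ is unitary, so this $(\C[T]\#\C[G]_\beta)$-comodule is unitary; by definition this is exactly the statement that $(Z,U)$ defines a \emph{unitary} $\C[G]_\beta$-comodule, and by the last assertion of Lemma~\ref{lem:conjugate} it also follows that $U$ is unitary in $\C[G]_\beta$.

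For condition~(ii), since $(Z,U)$ defines a unitary $\C[G]_\beta$-comodule, Lemma~\ref{lem:conjugate} gives that $(\bar Z,\bar U_Z)$ defines its conjugate, that is, the $\C[G]_\beta$-comodule corresponding to the conjugate of the unitary $(\C[T]\#\C[G]_\beta)$-comodule with matrix $\phi(U)$. This conjugate has corepresentation matrix $(\phi(u_{ij})^*)_{i,j}=\phi(\bar U)$, so it is the image under $\phi$ of the conjugate $\C[G]$-comodule $\bar M$, which has matrix $\bar U=(u_{ij}^*)_{i,j}$. Over the compact matrix quantum group $G$ the comodule $\bar M$ is unitarizable, so there is $F\in\operatorname{GL}_m(\C)$ with $F\bar UF^{-1}$ a unitary corepresentation matrix of $\C[G]$; applying the $*$-homomorphism $\phi$ shows $F\phi(\bar U)F^{-1}$ is unitary in $\Mat_m(\C[T]\#\C[G]_\beta)$, so $(\bar Z,\bar U_Z)$ defines a unitarizable $\C[G]_\beta$-comodule. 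Together with the generation statement this verifies all of Definition~\ref{def:braided-matrix}, with $U$ as fundamental unitary.

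I do not anticipate a genuine obstacle: the whole argument is formal once Lemmas~\ref{lem:braided-reps} and~\ref{lem:conjugate} are available. The two places needing a little care are the reduction to a basis in which $Z$ is diagonal — this is what makes the bigrading, and hence the generation step, transparent — and the routine identification (via~\eqref{eq:H-in-bos}, Proposition~\ref{prop:braided-comodule} and Lemma~\ref{lem:conjugate}) of the comodule corresponding to $(Z,U)$ with the image under $\phi$ of the $\C[G]$-comodule $M$ carrying $U$, together with the elementary fact that a morphism of Hopf $*$-algebras induces a $*$-preserving monoidal functor between comodule categories, so that it carries unitary comodules to unitary comodules and conjugate comodules to conjugate comodules.
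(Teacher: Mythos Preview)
Your proof is correct and follows essentially the same approach as the paper's: both use the Hopf $*$-algebra map $\phi$ of~\eqref{eq:H-in-bos} together with Lemmas~\ref{lem:braided-reps} and~\ref{lem:conjugate} to identify the $(\C[T]\#\C[G]_\beta)$-comodules attached to $(Z,U)$ and $(\bar Z,\bar U_Z)$ with $\phi(U)$ and $\phi(\bar U)$, and both handle generation by diagonalizing $Z$ and noting that $\cdot_\beta$, $*_\beta$ differ from $\cdot$, $*$ by phase factors on bihomogeneous elements. Your write-up is simply more explicit about the intermediate identifications (e.g., computing $Z\#U=\phi(U)$ entrywise and invoking the matrix $F$ explicitly), but the logical structure is identical.
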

\begin{proof}
Put $Z=\pi(U)$. Recall that by~\eqref{eq:H-in-bos} we have a Hopf $*$-algebra map
$$
\phi\colon \C[G] \to \C[T] \# \C[G]_\beta,\quad \phi(x) = \pi(x_{(1)}) \# x_{(2)}.
$$
By Lemma \ref{lem:braided-reps} this implies that the pair $(Z, U)$ defines a unitary $\C[G]_\beta$–co\-mo\-du\-le, with the corresponding $(\C[T]\# \C[G]_\beta)$-comodule given by the unitary $\phi(U)=Z\# U$. The conjugate $(\C[T]\# \C[G]_\beta)$-comodule is given by $\phi(\bar U)$. As $\bar U\in\Mat_m(\C[G])$ is unitarizable, by Lemma~\ref{lem:conjugate} we see that both conditions (i) and (ii) in Definition~\ref{def:braided-matrix} are satisfied.

It remains to check that $\C[G]_\beta$ is generated by the matrix coefficients of $U$ as a $*$-algebra. This becomes clear if we work in an orthonormal basis where $Z$ is diagonal, as then the products of the elements $u_{ij}$ and their adjoints in $\C[G]$ and $\C[G]_\beta$ coincide up to phase factors.
\end{proof}

\begin{remark}
\label{rem:other-fund}
Even though $\C[G]_\beta$ has fundamental representation $(\pi(U),U)$, condition~\eqref{eq:condition} can be satisfied for another pair $(Z',U)$, which is then also a fundamental representation. A particularly interesting situation is when $Z' \in \Mat_m(\C[T/T_0])$ for a $\C[G]$-cocentral subgroup $T_0 \subset T$. In this case we can view $\C[G]_\beta$ as a braided compact matrix quantum group over $(T/T_0, Z', i^*\beta)$. \ee
\end{remark}

We record a useful lemma related to the above remark.

\begin{lemma}
\label{lem:wZ}
Assume $A \in \hopf(\C[T],\beta)$ and take $w\in\hat T$. Then $(Z,U)$ defines an $A$-comodule if and only if $(w Z, U)$ defines an $A$-comodule. If in addition $Z$ and $U$ are unitary, then we have the relation
\[ \bar{U}_{w Z} = D \bar{U}_{Z} D^{-1}, \quad D = (\beta(z_{ij}^*, w))_{i,j}. \]
\end{lemma}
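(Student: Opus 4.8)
The plan is to treat the two assertions separately: the equivalence of the comodule conditions is a one-line computation, and the formula for $\bar U_{wZ}$ is obtained by reducing to the case of a diagonal $Z$. For the first assertion, note that $wZ$ is again a corepresentation matrix for $\C[T]$ because $w$ is group-like, and it is unitary since $w^*w=1$ and $\C[T]$ is commutative (conversely $Z=w^{-1}(wZ)$, so $Z$ is a corepresentation matrix iff $wZ$ is). Condition~\eqref{eq:condition} for $(wZ,U)$ reads $\delta(u_{ij})=\sum_{s,t}u_{st}\otimes S(wz_{is})(wz_{tj})$, and since $S(w)=w^{-1}$ and $\C[T]$ is commutative, $S(wz_{is})(wz_{tj})=w^{-1}S(z_{is})\,wz_{tj}=S(z_{is})z_{tj}$; thus this is verbatim condition~\eqref{eq:condition} for $(Z,U)$, and the two pairs define $A$-comodules simultaneously.

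For the second assertion I would first check that the identity $\bar U_{wZ}=D\bar U_ZD^{-1}$ is invariant under replacing $(Z,U)$ by $(V^*ZV,V^*UV)$ for a unitary $m\times m$ matrix $V$, i.e.\ under a unitary change of basis of the underlying comodule. Since the conjugate comodule is intrinsically defined, such a change of basis replaces the fundamental matrices $\bar Z,\bar U_Z$ of the conjugate comodule of $(Z,U)$ by $\bar V^*\bar Z\bar V,\bar V^*\bar U_Z\bar V$ (the basis of the conjugate space changes by $\bar V$); applying this with $wZ$ in place of $Z$ shows $\bar U_{wZ}$ becomes $\bar V^*\bar U_{wZ}\bar V$. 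On the other hand a direct computation from the definition of $D$ gives that $D$ becomes $\bar V^*D\bar V$, hence $D^{-1}$ becomes $\bar V^*D^{-1}\bar V$. So both sides transform by conjugation by $\bar V$, and it suffices to verify the identity in one convenient basis. As $Z$ is a unitary corepresentation matrix of the compact abelian group $T$, the continuous homomorphism $T\to\mathrm{U}(m)$ it defines has abelian, hence simultaneously diagonalizable, image; choosing $V$ accordingly we may assume $Z=\diag(\chi_1,\dots,\chi_m)$ with $\chi_i\in\hat T$ (this also makes $D$ visibly invertible).

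For such a diagonal $Z$ all summations in~\eqref{eq:barUZ} collapse, leaving $\bar u_{ij}^Z=\beta(\chi_i\chi_j^{-1},\chi_i^{-1})u_{ij}^*$ and, applying the same formula to $wZ=\diag(w\chi_1,\dots,w\chi_m)$, $\bar u_{ij}^{wZ}=\beta(\chi_i\chi_j^{-1},w^{-1}\chi_i^{-1})u_{ij}^*$; expanding the second entry by bimultiplicativity of $\beta$ on $\hat T$ yields $\bar u_{ij}^{wZ}=\beta(\chi_i,w)^{-1}\beta(\chi_j,w)\,\bar u_{ij}^Z$. Meanwhile $z_{ij}^*=\delta_{ij}\chi_i^{-1}$ gives $D=\diag(\beta(\chi_1,w)^{-1},\dots,\beta(\chi_m,w)^{-1})$, so $(D\bar U_ZD^{-1})_{ij}=\beta(\chi_i,w)^{-1}\beta(\chi_j,w)\,\bar u_{ij}^Z$, which matches; this proves the identity in the chosen basis and hence in general. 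I expect the only delicate point to be the invariance in the second paragraph — specifically the transformation rule $\bar U_Z\mapsto\bar V^*\bar U_Z\bar V$ for the matrix defined by~\eqref{eq:barUZ}, which one reads off from the naturality of the conjugate-comodule construction behind Lemma~\ref{lem:conjugate} or, alternatively, checks directly from~\eqref{eq:barUZ} by a routine but somewhat lengthy calculation.
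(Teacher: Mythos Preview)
Your proof is correct and follows the paper's primary approach: reduce to an orthonormal basis where $Z$ is diagonal and compute directly from formula~\eqref{eq:barUZ}; the paper simply asserts this works without spelling out the change-of-basis invariance argument you supply. The paper also sketches a basis-free alternative: for the bosonization corepresentation $W=Z\#U$ one checks $(w\#1)W=XW(w\#1)X^{-1}$ with $X=(\beta(z_{ij},w))_{i,j}$, which yields $\bar U_{wZ}=\bar X\,\bar U_Z\,\bar X^{-1}$, and then observes $\bar X=D$ since $\overline{\beta(x,w)}=\beta(x^*,w)$.
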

\begin{proof}
The first claim is obvious from condition~\eqref{eq:condition}. The second claim is easy to check in an orthonormal basis where $Z$ is diagonal, in which case it follows immediately from~\eqref{eq:barUZ}. More conceptually, one can check that for the corepresentation matrix $W=(\sum_k z_{ik} \# u_{kj})_{i,j}$ for $\C[T]\#A$ we have
$$
(w\#1)W=XW(w\#1)X^{-1},
$$
where $X=(\beta(z_{ij},w))_{i,j}$, which is a matrix commuting with $Z$. This implies that $\bar U_{wZ}=\bar X\bar U_Z\bar X^{-1}$. It remains to observe that $\overline{\beta(x, w)}=\beta(x^*,w)$ for all $x\in\C[T]$ to see that $\bar X=D$.
\end{proof}

\bigskip

\section{Examples: transmuting matrix quantum groups}\label{s3}

Before we embark on the examples, we remark that in a number of recent papers (see, e.g., \cite{KMRW,MR-ort,BJR, ABRR}) braided quantum groups are constructed in a C$^*$-algebraic setting. However, the corresponding bosonizations are C$^*$-algebraic compact quantum groups, and these always have dense $*$-subalgebras of matrix coefficients, which leads to purely algebraic results. Conversely, in our examples the bosonizations will be compact quantum groups by Theorem~\ref{thm:main} (as unitary cocycle twisting preserves compactness) or Proposition~\ref{prop:bosonizationBMQG}, and hence they can be completed to C$^*$-algebraic compact quantum groups. We can therefore go back and forth between the $*$-algebraic and C$^*$-algebraic settings. Below we will not dwell on the specific details of this but rather stick to the algebraic picture.

\subsection{Braided \texorpdfstring{$SU_q(2)$}{SUq(2)}}
\label{ex:SU_q(2)}
Fix $q > 0$ and recall that $H := \C[SU_q(2)]$ is the universal unital $*$-algebra with generators $\alpha$ and $\gamma$ subject to the relations
\[\alpha\gamma = q\gamma\alpha, \quad \alpha\gamma^* = q \gamma^*\alpha, \quad \gamma^*\gamma = \gamma\gamma^*,\]
\[ \quad \alpha^*\alpha + \gamma^*\gamma = 1, \quad \alpha\alpha^* + q^2\gamma\gamma^* = 1.\]
It is a Hopf $*$-algebra with coproduct
\[\Delta(\alpha) = \alpha \otimes \alpha - q\gamma^* \otimes \gamma, \quad \Delta(\gamma) = \gamma \otimes \alpha+\alpha^* \otimes \gamma.\]

Consider the map
\[ \pi\colon H \to \C[\T] = \C[z,z^{-1}], \quad \pi(\alpha) = z, \quad \pi(\gamma) = 0.\]
Under the identification $\Z=\hat \T$, we have
$$
\alpha\in H_{1,1},\qquad \gamma\in H_{{-1},1},
$$
and the restricted right adjoint coaction $\mathrm{ad}_\pi$ is determined by
\begin{equation*}
\label{eq:SU(2)-adjoint-action}
\mathrm{ad}_\pi(\alpha) = \alpha \otimes 1, \quad \mathrm{ad}_\pi(\gamma) = \gamma \otimes z^{2}.
\end{equation*}

For $\lambda \in \T$, define a bicharacter on $\Z=\hat \T$ by $\beta_\lambda(m, n) = \lambda^{-mn}$. To find relations in the transmutation $H_\lambda = H_{\beta_\lambda}$ we  write $a\cdot b=a\cdot_{\beta_\lambda} b$ and $a^{*_\lambda} = a^{*_{\beta_\lambda}}$. Then, by~\eqref{eq:trans-prod} and~\eqref{eq:trans-star},
$$
\alpha^{*_\lambda}=\beta_\lambda(0,{-1})\alpha^*=\alpha^*,\qquad \gamma^{*_\lambda}=\beta_\lambda(2,1)\gamma^*=\lambda^{-2}\gamma^*,
$$
and
$$
\alpha \cdot \gamma = \beta_\lambda(0,1) \alpha\gamma=\alpha\gamma = q\gamma\alpha = q \beta_\lambda(2,{-1})^{-1} \gamma \cdot \alpha = q\lambda^{-2} \gamma \cdot \alpha,
$$
$$
\alpha \cdot \gamma^{*_\lambda} = \beta_\lambda(0,{-1}) \alpha\gamma^{*_\lambda}= \alpha\gamma^{*_\lambda} = q\gamma^{*_\lambda}\alpha = q \beta_\lambda({-2},{-1})^{-1} \gamma^{*_\lambda} \cdot \alpha = q\lambda^{2} \gamma^{*_\lambda} \cdot \alpha,
$$
$$
\gamma^{*_\lambda} \cdot \gamma = \beta_\lambda({-2},1) \gamma^{*_\lambda}\gamma =\gamma^*\gamma=\gamma\gamma^* = \beta_\lambda(2,{-1})\gamma\gamma^{*_\lambda} = \gamma \cdot \gamma^{*_\lambda},
$$
$$
\alpha \cdot \alpha^{*_\lambda} = \alpha\alpha^*,\qquad \alpha^*\alpha = \alpha\cdot\alpha^{*_\lambda}.
$$
Defining $q' = q\lambda^{2}$ we get the following relations in $H_\lambda$: \[\alpha\cdot\gamma = \bar{q}\, '\gamma\cdot\alpha, \quad \alpha\cdot\gamma^{*_\lambda} = q' \gamma^{*_\lambda}\cdot\alpha, \quad \gamma^{*_\lambda}\cdot\gamma = \gamma\cdot\gamma^{*_\lambda},\]
\[ \quad \alpha^{*_\lambda}\cdot\alpha + \gamma^{*_\lambda}\cdot\gamma = 1, \quad \alpha\cdot\alpha^{*_\lambda} + |q'|^2\gamma\cdot\gamma^{*_\lambda} = 1.\]
It is not difficult to see that these relations completely describe the transmuted algebra; in the next subsection we will prove a more general result.
The coproduct remains unchanged, so we have
\[\Delta(\alpha) = \alpha \otimes \alpha - q'\gamma^{*_\lambda} \otimes \gamma, \qquad \Delta(\gamma) = \gamma \otimes \alpha+\alpha^{*_\lambda} \otimes \gamma.\]

These formulas are the same as for the braided quantum group $SU_{q'}(2)$ constructed in \cite{KMRW}, modulo a small but important nuance. By Proposition~\ref{prop:braided-over-quotient}, $H_\lambda$ can be viewed as a braided quantum group over different tori. Namely, we see that $H_\lambda$ can be viewed as a braided compact matrix quantum group over both triples
\begin{equation*}
\label{eq:2repsSUq(2)}
(\T, \, \begin{pmatrix}
z & 0 \\
0 & z^{-1}
\end{pmatrix} , \, \beta_\lambda) \quad \mbox{ and } \quad  (\T/T_0, \, \begin{pmatrix}
z^2 & 0 \\
0 & 1
\end{pmatrix}, \, i^*\beta_\lambda),
\end{equation*}
where $T_0 = \{-1,1\} \subset \T$, see Remark \ref{rem:other-fund}. As $\C[\T/T_0]$ is generated by $z^2$ we have the isomorphism
\begin{equation*}
\label{eq:tori-iso}
f\colon \C[\mathbb{T}/T_0] \to \C[w,w^{-1}], \quad f(z^2) = w.
\end{equation*}
Moreover, $(i \circ f^{-1})^*\beta_\lambda = \beta_{\zeta}$, where $\zeta = \lambda^4 = q'/\bar{q}\,'$. Therefore we can consider $H_\lambda$ as a braided compact matrix quantum group over the triple
\[(\T, \, \begin{pmatrix}
w & 0 \\
0 & 1
\end{pmatrix}, \, \beta_\zeta).\]
This is the braided quantum group $\C[SU_{q'}(2)] \in \hopf(\C[\T], \beta_\zeta)$ considered in \cite{KMRW}.

\medskip

Finally, let us consider the bosonizations. By Theorem~\ref{thm:main} we have
\[\C[z,z^{-1}]\# H_\lambda \cong {}_{J}(\C[z,z^{-1}] \otimes \C[SU_q(2)])_{J^{-1}},\]
where $J((m,n),({m'},{n'})) = \lambda^{nm'}$. In other words, the bosonization is a cocycle twist of the compact quantum group $\T\times SU_q(2)$.

On the other hand, by the same theorem, the bosonization $\C[w,w^{-1}] \, \# \, H_\lambda$ is a cocycle twist of
$(\T\times SU_q(2))/\Delta(T_0)$. It is easy to see that the latter quantum group is isomorphic to~$U_q(2)$, similarly to the classical isomorphism
\begin{equation*}\label{eq:U(2)}
(\T\times SU(2))/{\Delta(T_0)}\cong U(2),\quad [(z,U)]\mapsto\begin{pmatrix}
z & 0 \\
0 & z
\end{pmatrix}U,
\end{equation*}
and therefore its cocycle twist must be one of the quantum deformations of~$U(2)$ studied in~\cite{ZZ}, cf.~\cite{KMRW}.

\subsection{Braided free orthogonal quantum groups}
Let $m \geq 2$ be a natural number, $F \in \mathrm{GL}_m(\C)$ and assume that $F\bar{F} = \pm 1$. Let $\C[O_F^+]$ be the universal unital $*$-algebra generated by elements~$u_{ij}$, $1 \leq i,j \leq m$, subject to the relations
\[U = (u_{ij})_{i,j}\quad \mbox{is unitary and}\quad U = F\bar{U}F^{-1}.\]
The Hopf $*$-algebra structure on $\C[O_F^+]$ is defined as in Definition~\ref{def:matrix}, and $O_F^+$ is called a \textit{free orthogonal quantum group}.

Let $T$ be a compact abelian group and $Z\in\Mat_m(\C[T])$ be a unitary corepresentation matrix satisfying $F\bar{Z}F^{-1} = Z$. By the universality of $\C[O_F^+]$ there is a Hopf $*$-algebra map $\pi\colon \C[O_F^+] \to \C[T]$ such that $\pi(u_{ij})=z_{ij}$. Fix a bicharacter $\beta\colon \hat{T} \times \hat{T} \to \T$ and consider the transmutation $\C[O_F^+]_\beta$. It is natural to call it a \emph{braided free orthogonal quantum group}.

\begin{proposition}
\label{prop:O_F^+-universal}
The braided Hopf $*$-algebra $\C[O_F^+]_\beta$ is a braided compact matrix quantum group over $(T,Z,\beta)$ with fundamental unitary $U=(u_{ij})_{i,j}$.
As a $*$-algebra, it is a universal unital $*$-algebra with generators~$u_{ij}$ satisfying the relations
\begin{equation}
\label{eq:braided-free-ort}
U = (u_{ij})_{i,j} \quad\mbox{is unitary and}\quad U = F\bar{U}_Z F^{-1},
\end{equation}
where $\bar{U}_Z= (\bar{u}_{ij}^Z)_{i,j}$ and $\bar{u}_{ij}^Z = \sum_{s,l,t} \beta(z_{tj}^*z_{sl}, z_{il}^*)u_{st}^*$.
\end{proposition}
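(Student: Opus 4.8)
The plan is to obtain the first assertion directly from Proposition~\ref{prop:MQG-trans}, and to prove the universality in the second assertion by realizing $\C[O_F^+]_\beta$ as a graded cocycle twist of $\C[O_F^+]$. For the first assertion, observe that $(\C[O_F^+],U)$ is a compact matrix quantum group in the sense of Definition~\ref{def:matrix}: $U$ is a unitary corepresentation matrix, and $\bar U=F^{-1}UF$ is equivalent, via $F$, to the unitary corepresentation matrix $U$. Since $\pi(U)=Z$, Proposition~\ref{prop:MQG-trans} gives that $\C[O_F^+]_\beta$ is a braided compact matrix quantum group over $(T,Z,\beta)$ with fundamental unitary $U$; its proof also shows that $\C[O_F^+]_\beta$ is generated as a $*$-algebra by the $u_{ij}$.

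Next I would check that the relations~\eqref{eq:braided-free-ort} hold in $\C[O_F^+]_\beta$. Unitarity of $U$ is part of condition~(i) of Definition~\ref{def:braided-matrix}, which holds by the first assertion (see Lemma~\ref{lem:conjugate}). For $U=F\bar U_ZF^{-1}$, apply the Hopf $*$-algebra embedding $\phi\colon\C[O_F^+]\to\C[T]\#\C[O_F^+]_\beta$ of~\eqref{eq:H-in-bos} to the relation $U=F\bar UF^{-1}$. Since $\phi$ is a $*$-homomorphism and, by the proof of Proposition~\ref{prop:MQG-trans}, $\phi(U)=Z\#U$ is the corepresentation matrix of the $(\C[T]\#\C[O_F^+]_\beta)$-comodule corresponding to the $\C[O_F^+]_\beta$-comodule $(Z,U)$, while $\phi(\bar U)=\overline{\phi(U)}$ is the corepresentation matrix of the conjugate comodule, which by Lemma~\ref{lem:conjugate} corresponds to $(\bar Z,\bar U_Z)$, we obtain $\phi(U)=F\,\overline{\phi(U)}\,F^{-1}$; in other words $F$ is a morphism of $(\C[T]\#\C[O_F^+]_\beta)$-comodules from the conjugate of $Z\#U$ to $Z\#U$. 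Transporting this through the category isomorphism of Proposition~\ref{prop:braided-comodule} and using $F\bar ZF^{-1}=Z$, the $\C[O_F^+]_\beta$-coaction component of the statement ``$F$ is a morphism'' reads precisely $U=F\bar U_ZF^{-1}$. Hence $u_{ij}\mapsto u_{ij}$ defines a surjective $*$-homomorphism $\rho\colon B\twoheadrightarrow\C[O_F^+]_\beta$, where $B$ denotes the universal unital $*$-algebra with the presentation claimed in the statement.

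It remains to show $\rho$ is injective. After a unitary change of basis — under which $F$, $\pi$, $Z$ and the relations~\eqref{eq:braided-free-ort} all transform covariantly, so that the claim is not affected — we may assume $Z=\diag(\xi_1,\dots,\xi_m)$ with $\xi_i\in\hat T$. Then $u_{ij}\in H_{\xi_i,\xi_j}$ is homogeneous for the $\hat T\times\hat T$-grading of $H=\C[O_F^+]$, and by~\eqref{eq:trans-prod} and~\eqref{eq:trans-star} (cf.\ Section~\ref{s2}) the $*$-algebra $\C[O_F^+]_\beta$ is precisely the cocycle twist of the graded $*$-algebra $\C[O_F^+]$ by the $\T$-valued $2$-cocycle $\sigma((a,b),(c,d))=\beta(a^{-1}b,c^{-1})$ on $\hat T\times\hat T$. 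Let $\mathcal F$ be the free unital $*$-algebra on symbols $y_{ij}$, graded by $\deg y_{ij}=(\xi_i,\xi_j)$, let $\mathcal F^\sigma$ be its $\sigma$-twist, and let $J\subset\mathcal F$ be the homogeneous two-sided $*$-ideal generated by the defining relations ``$U$ unitary'' and ``$U=F\bar UF^{-1}$'' (these are homogeneous, the second one because $F\bar ZF^{-1}=Z$ forces $F_{kj}=0$ unless $\xi_k\xi_j=1$), so that $\C[O_F^+]=\mathcal F/J$. Now $\mathcal F^\sigma$ is again a free $*$-algebra on the $y_{ij}$ (its monomials still form a basis), a homogeneous subspace is a two-sided $*$-ideal of $\mathcal F$ if and only if it is one of $\mathcal F^\sigma$, with the same homogeneous generating sets, and the linear maps $\mathcal F\to\C[O_F^+]$ and $\mathcal F^\sigma\to\C[O_F^+]_\beta$ both sending $y_{ij}\mapsto u_{ij}$ coincide; hence $\C[O_F^+]_\beta=\mathcal F^\sigma/J$. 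Rewriting the generators of $J$ in terms of the twisted product $\cdot_\beta$ and involution $*_\beta$ of $\mathcal F^\sigma$ — a short computation with the values of $\beta$ on the characters $\xi_i$, the same bookkeeping as in the previous step — identifies them, up to nonzero scalar factors, with the relations~\eqref{eq:braided-free-ort}. Therefore $B=\mathcal F^\sigma/J=\C[O_F^+]_\beta$, so $\rho$ is an isomorphism.

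The step I expect to be the main obstacle is this last one: making precise, with due care for the involution, the folklore fact that cocycle twisting of a $\hat T\times\hat T$-graded $*$-algebra turns a presentation by homogeneous generators and homogeneous relations into the presentation with the correspondingly twisted relations, and then carrying out the bicharacter bookkeeping to check that these twisted relations coincide with~\eqref{eq:braided-free-ort}.
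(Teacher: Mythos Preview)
Your argument is correct. For the first assertion and the verification that relations~\eqref{eq:braided-free-ort} hold in $\C[O_F^+]_\beta$, you follow essentially the same line as the paper: invoke Proposition~\ref{prop:MQG-trans}, use the embedding $\phi$, and read off $U=F\bar U_ZF^{-1}$ from $\phi(\bar U)=\bar Z\#\bar U_Z$.

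The universality proof, however, is genuinely different. The paper argues via bosonization: it takes the universal algebra $\tilde A$ on relations~\eqref{eq:braided-free-ort}, builds a $*$-homomorphism $\tilde\phi\colon\C[O_F^+]\to\C[T]\#\tilde A$, and then compares the two linear maps $\tilde\psi,\psi\colon\C[T]\otimes\C[O_F^+]\to\C[T]\#\tilde A$ and $\C[T]\#\C[O_F^+]_\beta$. Since $\psi=(\iota\#\rho)\tilde\psi$ with $\psi$ a linear isomorphism (by Theorem~\ref{thm:main}) and $\tilde\psi$ surjective, $\rho$ must be bijective. Your route instead diagonalises $Z$, identifies the transmutation with the $\sigma$-twist of the $\hat T\times\hat T$-graded $*$-algebra $\C[O_F^+]$, and appeals to the principle that a cocycle twist of a graded $*$-algebra carries a presentation by homogeneous relations to the presentation by the correspondingly twisted relations; the bicharacter bookkeeping then shows these are exactly~\eqref{eq:braided-free-ort}.

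Both approaches are sound. The paper's has the advantage of reusing the structural isomorphism of Theorem~\ref{thm:main} and never needing to diagonalise $Z$ or manipulate $\beta$-factors explicitly; it is also the template later reused (implicitly) for the free unitary case. Your approach is more self-contained---it avoids the bosonization layer entirely---and makes transparent \emph{why} the relations transform as they do, at the cost of the explicit computations you flag. The claim that $\mathcal F^\sigma$ is again free and that homogeneous $*$-ideals with their homogeneous generating sets are preserved under twisting is correct and routine, but worth stating as a lemma since it is the hinge of your argument.
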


\bp
For the purpose of this proof let us denote the fundamental unitary of $O^+_F$ by $V=(v_{ij})_{i,j}$ and write $A$ for $\C[O_F^+]_\beta$. The first claim follows from Proposition~\ref{prop:MQG-trans}. Relations~\eqref{eq:braided-free-ort} are obtained by considering, as in the proof of that proposition, the Hopf $*$-algebra map $\phi\colon \C[O^+_F]\to\C[T]\# A$, $\phi(x)=\pi(x_{(1)})\# x_{(2)}$, and using that $\phi(V)=Z\# U$, $\phi(\bar V)=\bar Z\# \bar U_Z$.

\smallskip

It remains to show that as a $*$-algebra $A$ is completely described by relations~\eqref{eq:braided-free-ort}. Consider a universal unital $*$-algebra $\tilde A$ with generators $\tilde u_{ij}$ satisfying these relations, and let $\rho\colon\tilde A\to A$ be the $*$-homomorphism such that $\rho(\tilde u_{ij})=u_{ij}$.

Working in a basis where~$Z$ is diagonal, it is not difficult to check that $\tilde A$ is a $\C[T]$-comodule $*$-algebra, with the coaction of~$\C[T]$ given by
$$
\tilde\delta(\tilde u_{ij}) = \sum_{s,t} \tilde u_{st} \otimes z_{si}^*z_{tj},\quad\text{or}\quad (\iota\otimes\tilde\delta)(U)=Z^*_{13}\tilde U_{12}Z_{13}.
$$

Consider the smash product $\C[T]\# \tilde A=\C[T]\otimes_\beta\tilde A$. It is again not difficult to check that we have a $*$-homomorphism
$$
\tilde\phi\colon\C[O^+_F]\to \C[T]\# \tilde A,\quad \tilde\phi(v_{ij})=\sum_kz_{ik}\#\tilde u_{kj}.
$$
Define linear maps
\begin{align*}
&\tilde\psi\colon \C[T]\otimes\C[O^+_F]\to \C[T]\# \tilde A,\qquad \tilde\psi(x\otimes a)=x\tilde\phi(a),\\
&\psi\colon \C[T]\otimes\C[O^+_F]\to \C[T]\# A,\qquad \psi(x\otimes a)=x\pi(a_{(1)})\# a_{(2)}.
\end{align*}
Then $\psi=(\iota\#\rho)\tilde\psi$. The map $\psi$ is a linear isomorphism, e.g., by Theorem~\ref{thm:main}. On the other hand, the map $\tilde\psi$ is surjective, which becomes particularly clear if we work in a basis where~$Z$ is diagonal and therefore $\tilde\phi(v_{ij})=z_{ii}\#\tilde u_{ij}$. (Alternatively, we can observe that $\tilde\psi$ defines a homomorphism $\C[T]\#\C[O^+_F]\to\C[T]\#\tilde A$, cf.~Remark~\ref{rem:cocycle}, and its image contains the elements $1\#\tilde u_{ij}$.) It follows that $\tilde\psi$ is a linear isomorphism and hence~$\rho$ is an isomorphism as well.
\ep

Next, we want to change the perspective on the braided free orthogonal quantum groups and show how they can be associated with a larger class of matrices than $F$ as above.

\begin{proposition}
\label{prop:G_A}
Let $A\in\mathrm{GL}_m(\C)$ ($m\ge2$) be a matrix such that $A\bar A$ is unitary, and choose a sign $\tau=\pm1$, with $\tau=1$ if $m$ is odd. Then there are a compact abelian group $T$, a unitary corepresentation matrix $X=(x_{ij})_{i,j}\in\Mat_m(\C[T])$, a character $w\in\hat T$ and a bicharacter $\beta\colon\hat T\times\hat T\to\T$ such that
\begin{equation}\label{eq:BFO-condition}
A(w^2 \bar{X})A^{-1} = X\quad \mbox{in}\quad \Mat_m(\C[T])
\end{equation}
and $AC\overline{AC} = \tau1$, where $C = (\beta(x_{ij}^*, w))_{i,j}$.
For every such quadruple $(T,X,w,\beta)$, consider a universal unital $*$-algebra $\C[O_A^{X,\beta}]$ with generators~$u_{ij}$ satisfying the relations
\begin{equation*}
\label{eq:braided-free-ort2}
U = (u_{ij})_{i,j} \quad\mbox{is unitary and}\quad U = A\bar{U}_X A^{-1},
\end{equation*}
where $\bar{U}_X= (\bar{u}_{ij}^X)_{i,j}$ and $\bar{u}_{ij}^X = \sum_{s,l,t} \beta(x_{tj}^*x_{sl}, x_{il}^*)u_{st}^*$. Then $\C[O_A^{X,\beta}]$, equipped with the coaction
$$
\delta(u_{ij}) = \sum_{s,t} u_{st} \otimes x_{si}^*x_{tj},
$$
is a braided compact matrix quantum group over $(T,X,\beta)$ with fundamental unitary $U$.
\end{proposition}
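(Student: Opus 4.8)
The statement has two independent parts: the \emph{existence} of a quadruple $(T,X,w,\beta)$ meeting the two displayed conditions, and the assertion that for \emph{every} such quadruple the $*$-algebra $\C[O_A^{X,\beta}]$, with the indicated coaction, is a braided compact matrix quantum group over $(T,X,\beta)$ with fundamental unitary $U$. I would dispose of the second part first, reducing it to Proposition~\ref{prop:O_F^+-universal}, and then treat the more delicate existence part.

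For the second part, put $F:=AC$. By hypothesis $F\bar F=\tau 1$, so a genuine free orthogonal quantum group $O_F^+$ is available in the sense recalled above; moreover $Z:=w^{-1}X$ is a unitary corepresentation matrix of $\C[T]$ with $F\bar ZF^{-1}=Z$. Indeed $\bar Z=w\bar X$, and since $C$ is the complex conjugate of the matrix $(\beta(x_{ij},w))_{i,j}$, which commutes with $X$ by the proof of Lemma~\ref{lem:wZ}, it commutes with $\bar X$; hence $F\bar ZF^{-1}=w\,AC\bar XC^{-1}A^{-1}=w\,A\bar XA^{-1}=w^{-1}X=Z$, the last equality by~\eqref{eq:BFO-condition}. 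Applying Proposition~\ref{prop:O_F^+-universal} to the data $(F,T,Z,\beta)$ shows that $\C[O_F^+]_\beta$ is a braided compact matrix quantum group over $(T,w^{-1}X,\beta)$, whose underlying $*$-algebra is presented by generators $v_{ij}$, the unitarity of $V=(v_{ij})$, and $V=F\bar V_{w^{-1}X}F^{-1}$, with $\C[T]$-coaction $v_{ij}\mapsto\sum_{s,t}v_{st}\otimes x_{si}^*x_{tj}$. To recognise this as $\C[O_A^{X,\beta}]$ I would first pass to an orthonormal basis in which $X$ is diagonal: this is possible since $X$ is a unitary corepresentation matrix of a compact abelian group, it replaces $A$ by $uAu^T$ for a unitary $u$ and keeps both displayed conditions valid, and it is harmless, the conclusion being invariant under such base changes. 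With $X$ diagonal, $C$ is diagonal and $C\bar C=1$; Lemma~\ref{lem:wZ} gives $\bar V_{w^{-1}X}=\bar C\,\bar V_X\,\bar C^{-1}$, so $F\bar V_{w^{-1}X}F^{-1}=A(C\bar C)\bar V_X(C\bar C)^{-1}A^{-1}=A\bar V_XA^{-1}$. Thus the two $*$-algebra presentations coincide under $v_{ij}\leftrightarrow u_{ij}$, the coactions agree (the coaction in the statement is precisely condition~\eqref{eq:condition} for $(X,U)$), and transporting the braided Hopf $*$-structure along the isomorphism --- and invoking Lemma~\ref{lem:wZ} once more, as in Remark~\ref{rem:other-fund}, to replace the reference corepresentation matrix $w^{-1}X$ by $X$ --- one concludes.

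For existence, since $A\bar A$ is unitary it is normal, so after a change of basis $A\mapsto uAu^T$ we may assume $A\bar A=\Lambda=\bigoplus_\mu\mu\,1_{d_\mu}$ with the $\mu$ distinct and unimodular; the relation $A_{ij}\ne0\Rightarrow\Lambda_{ii}\Lambda_{jj}=1$ then shows $A$ is block diagonal for the decomposition of the index set into the eigenspaces, pairing $\mu$ with $\mu^{-1}$. I would build $(T,X,w,\beta)$ block by block, taking $\hat T$ generated by a character $w$ together with one further character per block, and with $X$ block diagonal. On a pair $\{\mu,\mu^{-1}\}$ with $\mu\ne\mu^{-1}$ one uses a single character $x_\mu$ on the $\mu$-summand and $w^2x_\mu^{-1}$ on the $\mu^{-1}$-summand; then $A(w^2\bar X)A^{-1}=X$ holds there, and, after a short computation, $AC\overline{AC}=\tau1$ reduces to choosing $\beta(x_\mu,w)$ to be a square root of $\tau\beta(w,w)^2/\mu$, which exists. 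On the $\mu=1$ summand when $\tau=1$, and on the $\mu=-1$ summand when $\tau=-1$, one takes the single character $w$, which yields $AC\overline{AC}=A\bar A=1=\tau1$ there. The remaining case --- the $\mu=1$ summand when $\tau=-1$ (forcing $m$, hence $d_1$, even) and the $\mu=-1$ summand when $\tau=1$ ($d_{-1}$ being even automatically) --- requires a genuinely non-diagonal $X$ on the block: writing $\beta(\cdot,w)$ as the evaluation homomorphism $\C[T]\to\C$ at a point $t_w\in T$, one needs a unitary corepresentation matrix whose value at $t_w$ is $\beta(w,w)$ times a unitary square root of $-1$ and which is compatible with $A$ in the sense that $\bar V$ and $V$ are conjugate via $A$ up to the scalar $\beta(w,w)^2$. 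Finally one defines $\beta$ on the chosen generators so as to realise all the required values; there is no conflict between blocks, as distinct blocks involve disjoint generators.

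The real obstacle is this last case. On the $\pm1$-eigenspaces of $A\bar A$ where the sign does not match $\tau$, a diagonal $X$ cannot work, and one has to produce a skew-Hermitian unitary on the block intertwining $A$ with $\bar A$; the antilinear involution $\xi\mapsto A\bar\xi$ squares to $\pm1$, so this amounts to a real (resp.\ quaternionic) structure compatible with $A$, and the construction goes through exactly because the block dimension is even --- equivalently, because $\tau=1$ whenever $m$ is odd, via a determinant/parity count --- after one has first brought $A$ into a suitable canonical form under $A\mapsto uAu^T$. By contrast the second part is essentially bookkeeping once the reduction to $\C[O_F^+]_\beta$ with $F=AC$ is in place, the only subtlety being the passage to a basis in which $X$, hence $C$, is diagonal so that $C\bar C=1$.
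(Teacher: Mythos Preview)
Your reduction of the structural part to $O_F^+$ with $F=AC$ and $Z=w^{-1}X$ is the same as the paper's. One small simplification: the detour through a basis in which $X$ is diagonal (to force $C\bar C=1$) is unnecessary. The paper observes directly that the matrix $D=(\beta(z_{ij}^*,w))_{i,j}$ in Lemma~\ref{lem:wZ} equals $\beta(w,w)\,C$, so $FD^{-1}=\beta(w,w)^{-1}A$ and hence $F\bar U_Z F^{-1}=FD^{-1}\bar U_X DF^{-1}=A\bar U_X A^{-1}$ in any basis. The rest of your Part~1 is fine.

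For existence, your route diverges from the paper's and is left incomplete precisely where it matters. The paper does not decompose along the eigenspaces of $A\bar A$; instead it invokes once and for all the canonical anti\mbox{-}diagonal form: there is a unitary $v$ with
\[
vAv^t=\begin{pmatrix}0&&a_m\\&\iddots&\\a_1&&0\end{pmatrix},\qquad a_i\bar a_{m-i+1}=\lambda_i\in\T,
\]
(\cite[Proposition~1.5]{HN21}). After this reduction a \emph{diagonal} $X=\diag(x_1,\dots,x_m)$ on a free abelian $\hat T$ always works, the two conditions becoming simply $x_ix_{m-i+1}=w^2$ and $\beta(x_i^{-1}x_{m-i+1},w)=\tau\lambda_i$, which are solved by taking $x_1,\dots,x_{[m/2]}$ (and $w$, or $w=x_{k+1}$ when $m=2k+1$) as free generators. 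Your block-by-block scheme handles the pairs $\{\mu,\mu^{-1}\}$ with $\mu\ne\mu^{-1}$ and the blocks with $\mu=\tau$ correctly, but on the ``hard'' blocks $\mu=-\tau$ you only \emph{describe} the target ($X(t_w)^2=-\beta(w,w)^2\,1$ and compatibility with $B$) and then appeal to an unspecified ``suitable canonical form''. That canonical form is exactly the anti-diagonal one above, applied to the block; without it you have not constructed $X$ there. In particular, your claim that a ``genuinely non-diagonal $X$'' is needed is misleading: since $T$ is abelian, $X$ is always diagonalizable, so the real content is a further normalization of $B$ under $B\mapsto uBu^t$---and that is precisely what the paper's global canonical form provides in one stroke. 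Completing your argument on the hard blocks would thus amount to redoing the paper's proof locally; the paper's approach is shorter and uniform.
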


\begin{proof}
Assume first that a quadruple $(T,X,w,\beta)$ as in the formulation indeed exists. Define $F =AC$. By our assumptions this matrix satisfies $F\bar{F} = \tau1$ and, as $C$ commutes with $\bar X$, we have
\[F(w\bar{X})F^{-1} = w^{-1}X.\]
By universality there is a Hopf $*$-homomorphism $\pi\colon \C[O_F^+] \to \C[T]$ sending the fundamental representation to $Z = w^{-1}X$. We claim that the corresponding transmutation $\C[O_F^+]_\beta$ satisfies all the required properties of~$\C[O_A^{X,\beta}]$.

Indeed, by Lemma~\ref{lem:wZ}, in $\C[O_F^+]_\beta$ we have
$$
\bar U_X=\bar U_{wZ}=D\bar U_Z D^{-1},
$$
where
$
D=(\beta(z^*_{ij},w))_{i,j}=(\beta(wx^*_{ij},w))_{i,j}=\beta(w,w)C$.
Then $A=\beta(w,w)FD^{-1}$, and the claim follows from Proposition~\ref{prop:O_F^+-universal}.

\smallskip

Next we explain the existence of $(T,X,w,\beta)$. By \cite[Proposition~1.5]{HN21}, we can find a unitary~$v$ such that $vAv^t$ has the form
\begin{equation}\label{eq:TL-matrix}
\begin{pmatrix}
0 & & a_m \\
& \iddots & \\
a_1 & & 0
\end{pmatrix}, \quad a_i\bar{a}_{m-i+1} = \lambda_i \in \T.
\end{equation}
If we can find a quadruple $(T,X,w,\beta)$ for this matrix, then $(T,v^*X(\cdot)v,w,\beta)$ is a quadruple for~$A$. Thus, we may assume that $A$ has the above form.

We will construct $T$ and $X$ such that $X$ is diagonal, so $X(t)=\operatorname{diag}(x_1(t),\dots,x_m(t))$ for some characters $x_i$. The conditions~\eqref{eq:BFO-condition} and $AC\overline{AC}=\tau1$ for $C =\operatorname{diag}(\beta(x_{1}^{-1}, w),\dots,\beta(x_{m}^{-1}, w))$ mean then that
\begin{equation}\label{eq:wTL}
x_ix_{m-i+1}=w^2\qquad\text{and}\qquad \beta(x_i^{-1}x_{m-i+1},w)=\tau\lambda_i.
\end{equation}
If $m=2k$, these conditions can be easily satisfied for the dual $T$ of a free abelian group with independent generators $x_1,\dots,x_k,w$ by letting $x_{m-i+1}=w^2x_i^{-1}$ for $1\le i\le k$. If $m=2k+1$, then $\tau=1$, $\lambda_{k+1}=1$ and the conditions can be satisfied for the dual $T$ of a free abelian group with independent generators $x_1,\dots,x_{k+1}$ by letting $w=x_{k+1}$ and $x_{m-i+1}=w^2x_i^{-1}$ for $1\le i\le k$.
\end{proof}

As is clear from the proof of this proposition, the braided quantum groups $O^{X,\beta}_A$ lie within the class of braided free orthogonal quantum groups that we defined by transmutation. Namely, we have the following:
\begin{corollary}\label{cor:free-orthogonal2}
The braided Hopf $*$-algebra $\C[O_A^{X,\beta}]\in\hopf(\C[T],\beta)$ is isomorphic to the transmutation $\C[O_F^+]_\beta$ with respect to the map $\C[O_F^+]\to\C[T]$, $U\mapsto w^{-1}X$, where $F = AC$.
\end{corollary}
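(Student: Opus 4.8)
The plan is to read off the claimed isomorphism directly from the construction performed inside the proof of Proposition~\ref{prop:G_A}. There, starting from a quadruple $(T,X,w,\beta)$, we put $F=AC$ with $C=(\beta(x_{ij}^*,w))_{i,j}$, verified $F\bar F=\tau1$, and produced the Hopf $*$-algebra map $\pi\colon\C[O_F^+]\to\C[T]$ sending the fundamental unitary to $Z=w^{-1}X$. So the object to be compared with $\C[O_A^{X,\beta}]$ is precisely $\C[O_F^+]_\beta$ for this $\pi$, and it only remains to identify the two.

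First I would compare the underlying $*$-algebras. By Proposition~\ref{prop:O_F^+-universal}, $\C[O_F^+]_\beta$ is the universal unital $*$-algebra on generators $u_{ij}$ with the relations ``$U=(u_{ij})_{i,j}$ is unitary'' and $U=F\bar U_Z F^{-1}$, while $\C[O_A^{X,\beta}]$ is, by definition, the universal unital $*$-algebra on the same generators with the relations ``$U$ is unitary'' and $U=A\bar U_X A^{-1}$. Hence it suffices to check that $A\bar U_X A^{-1}=F\bar U_Z F^{-1}$ as an identity in the free unital $*$-algebra on the $u_{ij}$. This is exactly the bookkeeping carried out in the proof of Proposition~\ref{prop:G_A}: the matrix identity $\bar U_X=D\bar U_Z D^{-1}$ with $D=\beta(w,w)C$ is a purely formal relation among the $u_{st}^*$ (verified, as in Lemma~\ref{lem:wZ}, in a basis diagonalizing $X$), and since $A=\beta(w,w)\,FD^{-1}$ it gives $A\bar U_X A^{-1}=F\bar U_Z F^{-1}$. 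Thus the two presentations coincide, and $u_{ij}\mapsto u_{ij}$ is a $*$-algebra isomorphism $\C[O_A^{X,\beta}]\to\C[O_F^+]_\beta$.

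Second I would check that this $*$-algebra isomorphism is an isomorphism of braided Hopf $*$-algebras over $(\C[T],\beta)$. Here the cleanest route is via the explicit description following Definition~\ref{def:braided-matrix}: once we know that $\C[O_F^+]_\beta$ is a braided compact matrix quantum group over $(T,X,\beta)$ (and not merely over $(T,Z,\beta)$) with fundamental unitary $U=(u_{ij})_{i,j}$, all of its structure maps --- the $\C[T]$-coaction through~\eqref{eq:condition}, and the coproduct, counit and antipode on the $u_{ij}$ --- are forced, hence they agree with those prescribed for $\C[O_A^{X,\beta}]$ in Proposition~\ref{prop:G_A}. That $\C[O_F^+]_\beta$ is a braided compact matrix quantum group over $(T,X,\beta)$ is precisely what was shown inside the proof of Proposition~\ref{prop:G_A} (via Lemma~\ref{lem:wZ} and Remark~\ref{rem:other-fund}): the pair $(X,U)=(wZ,U)$ defines a unitary comodule, and $(\bar X,\bar U_X)=(\bar Z,D\bar U_Z D^{-1})$ is unitarizable since $(\bar Z,\bar U_Z)$ is. In particular, specializing~\eqref{eq:condition} to $X=wZ$ and using $S(x_{is})=x_{si}^*$ (so that the factors $w$ and $w^{-1}$ cancel) recovers the coaction $\delta(u_{ij})=\sum_{s,t}u_{st}\otimes x_{si}^*x_{tj}$ appearing in Proposition~\ref{prop:G_A}.

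There is no real obstacle: the corollary is essentially a re-packaging of the proof of Proposition~\ref{prop:G_A}. The only points that need a little care are making the equality $A\bar U_X A^{-1}=F\bar U_Z F^{-1}$ a \emph{formal} identity in the free $*$-algebra, so that it can legitimately be used to compare presentations, and keeping straight that the data $\beta$, $Z=w^{-1}X$ and the correction factor $C$ assemble exactly as in that proof; granting this, the statement follows.
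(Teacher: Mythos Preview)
Your proposal is correct and is exactly the approach the paper takes: the corollary is stated without a separate proof because it is just a repackaging of the argument inside Proposition~\ref{prop:G_A}, and you have faithfully unpacked that argument (the identity $A\bar U_XA^{-1}=F\bar U_ZF^{-1}$ via $D=\beta(w,w)C$, together with the universal presentation from Proposition~\ref{prop:O_F^+-universal}). Your extra care in noting that the matrix identity is formal in the free $*$-algebra, and that the $\C[T]$-coaction and Hopf structure maps are forced once $(X,U)$ is the fundamental representation, makes the deduction explicit where the paper is content to say ``as is clear from the proof of this proposition''.
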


\begin{remark}\label{rem:TL}
A moment's reflection shows that in the proof of Proposition~\ref{prop:G_A} we could take a slightly smaller group $T$ and arrange $X$ to be faithful. Namely, if $m=2k$, instead of taking~$w$ as a separate independent generator, we could let $w=x_1^j$ for any $j\ne0,1$. Similarly, for $m=2k+1$ we could take $x_{k+1}=w=x_1^j$ for any $j\ne0,1$. In both cases we cannot choose groups of a smaller rank in general, since the numbers $\tau\lambda_i$ generate a group of rank up to $k=[m/2]$.
\end{remark}

\begin{remark}\label{rem:BFO}
Once \eqref{eq:BFO-condition} is satisfied, condition $AC\overline{AC} = \tau1$ can be formulated as follows. Let $t_w\in T$ be the element such that $x(t_w)=\beta(x,w)$ for all $x\in\hat T$, so that $C=\overline{X(t_w)}$. Then the requirement is
\begin{equation}\label{eq:BFO-condition2}
A\bar A= \tau \beta(w,w)^{2}X(t_w)^{-2}.
\end{equation}
As a prerequisite for constructing $O^{X,\beta}_A$ this condition can be written as
\begin{equation}\label{eq:BFO-condition2a}
A\bar A= c\, X(t_w)^{-2}\quad\text{for some}\quad c\in\T.
\end{equation}
Indeed, assume \eqref{eq:BFO-condition} and \eqref{eq:BFO-condition2a} are satisfied. Then applying complex conjugation and conjugation by $A$ to the last identity we get
$$
A\bar A= \bar c\,\beta(w,w)^4 X(t_w)^{-2}.
$$
Hence $c=\bar c\,\beta(w,w)^4$ and therefore $\tau:=c\,\beta(w,w)^{-2}=\pm1$, so that~\eqref{eq:BFO-condition2} is satisfied for this $\tau$.  Note that the sign must be~$+1$ for odd $m$, which becomes obvious if we choose a unitary $v$ such that $vA\,\overline{X(t_w)}v^t$ is of the form~\eqref{eq:TL-matrix}.
Thus, the braided compact matrix quantum groups $O^{X,\beta}_A$ are defined under assumptions~\eqref{eq:BFO-condition} and~\eqref{eq:BFO-condition2a}.\ee
\end{remark}

In the setting of Proposition~\ref{prop:O_F^+-universal}, assume now that $T_0\subset T$ is a closed $\C[O_F^+]$-cocentral subgroup. Since the elements $u_{ij}\in\C[O^+_F]$ are linearly independent, this means that the matrices~$Z(t)$, $t\in T_0$, are scalar. Then the condition $Z=F\bar Z F^{-1}$ implies that $Z(t)=\pm1$, so we get a character $\chi\colon T_0\to\{\pm1\}$. When it is nontrivial, it defines the standard $(\Z/2\Z)$-grading on $\Rep O^+_F$.

It is known that the quantum group $O^+_F$ is monoidally equivalent to $SU_q(2)$ for an appropriate~$q$ (see~\cite[Theorem~2.5.11]{NT13}), and this equivalence respects the $(\Z/2\Z)$-gradings. Combining this with Remark~\ref{rem:monoidal-equivalence}, one can conclude that the bosonization of the braided Hopf $*$-algebra  $\C[O^+_F]_\beta\in\hopf(\C[T/T_0],i^*\beta)$ is monoidally equivalent~to
$$
(T\times SU_q(2))/{(\operatorname{id}\times\chi)\Delta(T_0)}.
$$
Together with Corollary~\ref{cor:free-orthogonal2} this leads to the following conclusion.

\begin{proposition}\label{prop:monoidal-orthogonal}
In the setting of Proposition~\ref{prop:G_A}, let $q\in[-1,1]\setminus\{0\}$ be such that $\operatorname{sgn}q=-\tau$ and $|q+q^{-1}|=\Tr(A^*A)$. Assume $T_0\subset T$ is a closed subgroup satisfying $X(t)=\pm w(t)1$ for all $t\in T_0$, and let $\chi\colon T_0\to\{\pm1\}$ be the character such that $X=\chi w1$ on $T_0$. Then the bosonization of $\C[O^{X,\beta}_A]\in\hopf(\C[T/T_0],i^*\beta)$ is a compact quantum group monoidally equivalent to
\begin{equation}\label{eq:Uq(2)-type}
(T\times SU_q(2))/{(\operatorname{id}\times\chi)\Delta(T_0)}.
\end{equation}
\end{proposition}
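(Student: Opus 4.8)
The plan is to deduce the statement from the discussion immediately preceding it by feeding in the concrete data produced in Proposition~\ref{prop:G_A}. Put $C=(\beta(x_{ij}^*,w))_{i,j}$ and $F=AC$; by the hypotheses of Proposition~\ref{prop:G_A} the matrix $F$ satisfies $F\bar F=\tau1$, and by Corollary~\ref{cor:free-orthogonal2} there is an isomorphism in $\hopf(\C[T],\beta)$ between $\C[O_A^{X,\beta}]$ and the transmutation $\C[O_F^+]_\beta$ taken with respect to the Hopf $*$-algebra map $\pi\colon\C[O_F^+]\to\C[T]$ sending the fundamental unitary to $Z:=w^{-1}X$ (recall that $F\bar ZF^{-1}=Z$, so this is an instance of the setting of Proposition~\ref{prop:O_F^+-universal}).

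The first thing I would verify is that the subgroup $T_0$ appearing here is $\C[O_F^+]$-cocentral and that the resulting character coincides with the $\chi$ of the statement. Working in a basis that diagonalizes $Z$, write $Z=\operatorname{diag}(z_1,\dots,z_m)$ with $z_i\in\hat T$; then the fundamental generators satisfy $u_{ij}\in(\C[O_F^+])_{z_i,z_j}$, and from $X=\chi w1$ on $T_0$ we get $z_i|_{T_0}=w^{-1}x_i|_{T_0}=\chi$ for every $i$. Since $\C[O_F^+]$ is spanned by monomials in the $u_{ij}$ and $u_{ij}^*$, it follows that every bi-homogeneous component $(\C[O_F^+])_{a,b}\ne0$ has $q(a)=q(b)$ (both equal to the appropriate power of $\chi$, where $q\colon\C[T]\to\C[T_0]$ is the restriction), which is exactly the cocentrality condition; moreover $Z(t)=\chi(t)1$ for $t\in T_0$, so $\chi$ is precisely the character of the preceding discussion. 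By Proposition~\ref{prop:braided-over-quotient} we then have $\C[O_F^+]_\beta\in\hopf(\C[T/T_0],i^*\beta)$, and under the isomorphism of Corollary~\ref{cor:free-orthogonal2} the bosonization in the statement is $\C[T/T_0]\#\C[O_F^+]_\beta$.

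At this point the paragraph preceding the proposition applies (via Theorem~\ref{thm:main}, Corollary~\ref{cor:monoidal}, Remark~\ref{rem:monoidal-equivalence}, and the $\Z/2\Z$-equivariance of the Banica-type monoidal equivalence $O_F^+\simeq SU_q(2)$) and gives that $\C[T/T_0]\#\C[O_F^+]_\beta$ is monoidally equivalent to $(T\times SU_q(2))/(\operatorname{id}\times\chi)\Delta(T_0)$, where $q\in[-1,1]\setminus\{0\}$ is the parameter of the equivalence $O_F^+\simeq SU_q(2)$. It then remains to match this $q$ with the one in the statement. Inspecting $SU_q(2)$ one sees that a normalized conjugation matrix for it satisfies $F\bar F=-\operatorname{sgn}(q)1$ and $\Tr(F^*F)=|q+q^{-1}|$; since $F=AC$ has $F\bar F=\tau1$, this forces $\operatorname{sgn}q=-\tau$, and since $C$ is a diagonal unitary, $\Tr(F^*F)=\Tr(C^*A^*AC)=\Tr(A^*A)$, so $|q+q^{-1}|=\Tr(A^*A)$; for odd $m$ this is compatible with $\tau=1$ being forced. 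Finally, compactness follows as at the beginning of Section~\ref{s3}: by Theorem~\ref{thm:main}, $\C[T/T_0]\#\C[O_F^+]_\beta$ is a unitary $2$-cocycle twist of $\C[T/T_0]\ltimes\C[O_F^+]$, which by Proposition~\ref{prop:main-untwisted} is the function algebra of a quotient of the compact quantum group $T\times O_F^+$, and unitary $2$-cocycle twisting preserves compactness.

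I expect the only genuine obstacle to be the bookkeeping concealed in ``the paragraph preceding the proposition applies'': one must check that both $\mathcal{M}^{\C[T/T_0]\#\C[O_F^+]_\beta}_f$ and $\Rep\big((T\times SU_q(2))/(\operatorname{id}\times\chi)\Delta(T_0)\big)$ are identified, as monoidal categories, with the \emph{same} subcategory of $\operatorname{Vect}_f^{\hat T}\boxtimes\Rep SU_q(2)$, namely the one generated by the homogeneous components of bi-degree $(a,b)$ with $q(a)b=1$ --- using for the first identification Remark~\ref{rem:monoidal-equivalence} together with the grading-preserving equivalence $\Rep O_F^+\simeq\Rep SU_q(2)$, and for the second the standard description of the representations of a quotient by a central subgroup. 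Everything else is a routine check.
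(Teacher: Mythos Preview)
Your proposal is correct and follows essentially the same route as the paper: reduce to $\C[O_F^+]_\beta$ via Corollary~\ref{cor:free-orthogonal2} with $F=AC$, check that $T_0$ is $\C[O_F^+]$-cocentral with associated character~$\chi$, and then invoke the $(\Z/2\Z)$-grading-preserving monoidal equivalence $\Rep O_F^+\simeq\Rep SU_q(2)$ together with Remark~\ref{rem:monoidal-equivalence}. Your explicit matching of $q$ (via $F\bar F=\tau1$ and $\Tr(F^*F)=\Tr(A^*A)$, using only that $C=\overline{X(t_w)}$ is unitary --- diagonality is not needed) and your remark on compactness are details the paper leaves implicit but are exactly what is intended.
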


We remark that the sign of $q$ is not uniquely determined by a monoidal equivalence between~\eqref{eq:Uq(2)-type} and the bosonization of $O^{X,\beta}_A$  in general, since $U_q(2)$ and $U_{-q}(2)$ are cocycle twists of each other.

\smallskip

Next, it is possible to obtain a classification of braided free orthogonal quantum groups up to isomorphism similar to the known classification of ordinary free orthogonal quantum groups, see again~\cite[Theorem~2.5.11]{NT13}.

\begin{proposition}
Consider a compact abelian group $T$ and a bicharacter $\beta\colon\hat T\times\hat T\to\T$. Assume $A\in\mathrm{GL}_m(\C)$ ($m\ge2$) is a matrix such that $A\bar A$ is unitary and $X=(x_{ij})_{i,j}\in\Mat_m(\C[T])$ is a unitary corepresentation matrix such that conditions~\eqref{eq:BFO-condition} and~\eqref{eq:BFO-condition2a} are satisfied for some $w\in\hat T$.  Assume $A'\in\mathrm{GL}_{m'}(\C)$ ($m'\ge2$) and $X'=(x'_{ij})_{i,j}\in\Mat_{m'}(\C[T])$ is another such pair,  with~\eqref{eq:BFO-condition} and~\eqref{eq:BFO-condition2a} satisfied for some $w'\in\hat T$. Then the braided Hopf $*$-algebras~$\C[O^{X,\beta}_A]$ and~$\C[O^{X',\beta}_{A'}]$ over $(\C[T],\beta)$ are isomorphic if and only if $m=m'$ and there exist a unitary matrix $v\in\operatorname{U}(m)$ and a character $\chi\in\hat T$ such that
\begin{equation}\label{eq:iso-conditions}
vXv^*=\chi X'\quad\text{and}\quad vADv^t=A',
\end{equation}
where $D=(\beta(x^*_{ij},\chi))_{i,j}=\overline{X(t_\chi)}$.
\end{proposition}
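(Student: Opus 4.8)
The plan is to prove the two implications separately: the ``if'' direction is a direct construction, while the ``only if'' direction carries the content and exploits the structure of the bosonizations. In both directions it is convenient to record two transformation rules for the braided conjugate of a comodule $(Z,V)$ (in the sense of Lemma~\ref{lem:conjugate}): conjugating by a unitary $v$ gives $\overline{v^*Vv}_{v^*Zv}=v^t\bar V_Z\bar v$ (clear from \eqref{eq:barUZ} in a basis diagonalizing $Z$, or from $\bar V_Z$ being read off the conjugate corepresentation matrix of $Z\# V$), while the twist $Z\mapsto\chi Z$ by a character $\chi$ conjugates $\bar V_Z$ by the matrix $(\beta(z_{ij}^*,\chi))_{i,j}$, by Lemma~\ref{lem:wZ}. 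Note also that $v^t\bar v=\bar v v^t=1$ for $v$ unitary, and that $X(t_\chi)=\beta(\chi,\chi)\,v^*X'(t_\chi)v$ whenever $vXv^*=\chi X'$, so that $\overline{X(t_\chi)}=\beta(\chi,\chi)^{-1}v^t\,\overline{X'(t_\chi)}\,\bar v$.

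For the ``if'' direction, let $v\in\operatorname{U}(m)$ and $\chi\in\hat T$ satisfy~\eqref{eq:iso-conditions} and let $U'$ be the fundamental unitary of $\C[O^{X',\beta}_{A'}]$. The matrix $v^*U'v$ is unitary; I would combine the two rules above (noting that the $\C[T]$-coaction on it is the one via $X=\chi v^*X'v$) with the identities for $v^t\bar v$, $\overline{X(t_\chi)}$ and with $vADv^t=A'$ to verify, after the scalar $\beta(\chi,\chi)$'s cancel, that $v^*U'v=A\,\overline{v^*U'v}_X\,A^{-1}$. By the universal description in Proposition~\ref{prop:O_F^+-universal}, $u_{ij}\mapsto(v^*U'v)_{ij}$ then extends to a $*$-homomorphism $\C[O^{X,\beta}_A]\to\C[O^{X',\beta}_{A'}]$, with a two-sided inverse built the same way from $v^*=v^{-1}$. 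Finally, $vXv^*=\chi X'$ shows this $*$-isomorphism intertwines the $\C[T]$-coactions~\eqref{eq:condition}, and, since by Corollary~\ref{cor:free-orthogonal2} the coalgebras of $\C[O^{X,\beta}_A]$ and $\C[O^{X',\beta}_{A'}]$ are those of free orthogonal quantum groups, under $u_{ij}\mapsto u'_{ij}$ it also intertwines the coproducts, counits and antipodes; hence it is an isomorphism in $\hopf(\C[T],\beta)$.

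For the ``only if'' direction, let $\Psi\colon\C[O^{X,\beta}_A]\to\C[O^{X',\beta}_{A'}]$ be an isomorphism in $\hopf(\C[T],\beta)$. As a coalgebra isomorphism it identifies the coalgebras of $\C[O^+_F]$ and $\C[O^+_{F'}]$ (Corollary~\ref{cor:free-orthogonal2}), and comparing the dimensions $1,m^2,(m^2-1)^2,\dots$ and $1,(m')^2,((m')^2-1)^2,\dots$ of their simple subcoalgebras forces $m=m'$. Since $\Psi$ is a $\C[T]$-comodule map and a $*$-homomorphism, $(X,\Psi(U))$ is a unitary $\C[O^{X',\beta}_{A'}]$-comodule (its coaction is read off~\eqref{eq:condition} with $Z=X$ from $\delta'\Psi=(\Psi\otimes\iota)\delta$), and pulling it back along $\Psi$ returns the fundamental comodule $(X,U)$, which is irreducible; hence so is the bosonization corepresentation matrix $X\#\Psi(U)$. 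By Theorems~\ref{thm:cocycle-transmutation} and~\ref{thm:main} the bosonization $\C[T]\#\C[O^{X',\beta}_{A'}]$ has the coalgebra of $\C[T]\otimes\C[O^+_{F'}]$, whose simple comodules are the $\chi\boxtimes v_k$ ($\chi$ a character of $T$, $v_k$ the $k$-th irreducible of $O^+_{F'}$) and whose group-likes are precisely the characters of $T$; as $m=m'$, the $m$-dimensional ones among the simple comodules are exactly the $\chi\boxtimes v_1$. Therefore $X\#\Psi(U)$ and $X'\#U'$ are both of this form, so $X\#\Psi(U)$ is unitarily equivalent to $\chi\otimes(X'\#U')=(\chi X')\#U'$ for some character $\chi$; writing $X\#\Psi(U)=v^{-1}\bigl((\chi X')\#U'\bigr)v$ with $v\in\operatorname{U}(m)$ and applying $\iota\#\eps$ and $\eps\#\iota$ to this equality gives $vXv^*=\chi X'$ and $\Psi(U)=v^*U'v$. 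Finally, applying $\Psi$ to $U=A\bar U_X A^{-1}$ and invoking the two transformation rules, exactly as in the ``if'' part, turns it into $U'=M\,\bar U'_{X'}\,M^{-1}$ with $M=vADv^t(A')^{-1}$ and $D=\overline{X(t_\chi)}$; since this must hold for the universal $U'$, the matrix $M$ is a scalar $c^{-1}\in\T$, so $A'=cvADv^t$, and replacing $v$ by a unimodular square root of $c$ times $v$ (which changes neither $vXv^*$ nor $v^*U'v$) we may take $c=1$, which is~\eqref{eq:iso-conditions}.

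The step I expect to be the main obstacle is the representation-theoretic one in the converse: that $\Psi$ must carry the fundamental representation of $\C[O^{X,\beta}_A]$ to a unitary conjugate, twisted by a character of $\C[T]$, of that of $\C[O^{X',\beta}_{A'}]$. This requires identifying the bosonization, as a coalgebra, with $\C[T]\otimes\C[O^+_{F'}]$, using that the only $m$-dimensional irreducible comodule of $O^+_{F'}$ is the fundamental one, and checking that the resulting tensor-categorical description introduces no equivalences between $m$-dimensional comodules beyond those given by unitaries intertwining the $\C[T]$-coactions up to a character; this last point is exactly what produces the sharp constraint $vXv^*=\chi X'$ rather than merely ``$vXv^*$ equivalent to $X'$''.
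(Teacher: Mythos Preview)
Your argument is correct and follows the same two-step architecture as the paper's proof: construct the isomorphism directly in the ``if'' direction, and in the ``only if'' direction pass to the bosonizations, identify where the fundamental representation must land, read off $vXv^*=\chi X'$ and $\Psi(U)=v^*U'v$, and finally compare the two expressions $U'=(vADv^t)\bar U'_{X'}(vADv^t)^{-1}$ and $U'=A'\bar U'_{X'}(A')^{-1}$.

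The one genuine difference is in how you pin down the image of the fundamental. The paper invokes the monoidal equivalence of the bosonization with $T\times SU_q(2)$ (via $O^+_{F'}\sim SU_q(2)$) and then characterizes the relevant class as the simple noninvertible objects of \emph{smallest intrinsic dimension}. You instead stay at the coalgebra level: by Theorem~\ref{thm:main} the bosonization has the coalgebra of $\C[T]\otimes\C[O^+_{F'}]$, and among its simple comodules the ones of ordinary dimension $m$ are exactly the $\chi\boxtimes v_1$. Your route is more elementary, since it avoids appealing to the monoidal equivalence with $SU_q(2)$ and to quantum dimensions; the paper's route is more conceptual and would survive in settings where the fundamental is not singled out by its vector-space dimension.

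Two small points to tighten. First, your displayed identity ``$U'=M\bar U'_{X'}M^{-1}$ with $M=vADv^t(A')^{-1}$'' is not literally what the computation gives; what one obtains is $U'=(vADv^t)\bar U'_{X'}(vADv^t)^{-1}$, and comparing with $U'=A'\bar U'_{X'}(A')^{-1}$ shows that $(A')^{-1}vADv^t$ is a self-intertwiner of the irreducible $\bar U'_{X'}$, hence scalar. Second, you assert $c\in\T$ but do not argue it; the paper supplies the missing line by observing that both $A'\bar{A'}$ and $AD\,\overline{AD}$ are unitary (the latter because $D=\overline{X(t_\chi)}$ is unitary and $AD$ agrees with $\bar DA$ up to a phase), which forces $|c|=1$ before one rescales $v$.
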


\bp
Denote by $U$ and $U'$ the fundamental unitaries of $\C[O^{X,\beta}_A]$ and $\C[O^{X',\beta}_{A'}]$, resp. If conditions~\eqref{eq:iso-conditions} are satisfied, then $\bar U_{\chi^{-1}X}=D^{-1}\bar U_XD$ by Lemma~\ref{lem:wZ}, hence $U= AD\bar U_{\chi^{-1}X}(AD)^{-1}$, and it is easy to check that we get an isomorphism $\C[O^{X,\beta}_A]\cong\C[O^{X',\beta}_{A'}]$ of braided Hopf $*$-algebras such that $U\mapsto v^* U' v$.

Conversely, assume we have an isomorphism $\rho\colon \C[O^{X,\beta}_A]\to\C[O^{X',\beta}_{A'}]$. We then get an isomorphism $\iota\#\rho\colon \C[T]\#\C[O^{X,\beta}_A]\to\C[T]\#\C[O^{X',\beta}_{A'}]$ of the bosonizations, which in turn defines a monoidal equivalence of the corresponding C$^*$-tensor categories of finite dimensional unitary comodules. For $\C[T]\#\C[O^{X,\beta}_A]$, as we discussed before Proposition~\ref{prop:monoidal-orthogonal}, this C$^*$-tensor category is $\Rep T\boxtimes\Rep SU_q(2)$ for suitable $q\in[-1,1]\setminus\{0\}$. The simple noninvertible objects of the smallest intrinsic dimension (equal to $|q+q^{-1}|$) in this category  are the tensor products of characters $\chi\in\hat T$ with the fundamental representation of $SU_q(2)$. At the level of $\C[T]\#\C[O^{X,\beta}_A]$, these objects are defined by the unitary corepresentation matrices $\chi X\# U$. For the same reason, the simple noninvertible objects of the smallest intrinsic dimension in the category of finite dimensional unitary comodules of $\C[T]\#\C[O^{X',\beta}_{A'}]$ are defined by the unitary corepresentation matrices $\chi X'\# U'$. It follows that there exist $\chi\in\hat T$ and a unitary $v\colon\C^m\to\C^{m'}$ such that
$$
v(\iota\#\rho)(X\# U) v^*=\chi X'\# U'.
$$
In particular, we must have $m=m'$. As $(\iota\#\rho)(X\# U)=X\#\rho(U)$, by applying $\iota\#\eps_{\C[O^{X',\beta}_{A'}]}$ we first conclude that $v Xv^*=\chi X'$ and then that $v\rho(U)v^*=U'$.

As $v \chi^{-1}Xv^*=X'$ and $v\rho(U)v^*=U'$, we have $\bar v\rho(\bar U_{\chi^{-1}X})\bar v^*=\bar U'_{X'}$. Recall also that $\bar U_{\chi^{-1}X}=D^{-1}\bar U_XD$ for $D=(\beta(x^*_{ij},\chi))_{i,j}$. By the relations in  $\C[O^{X,\beta}_A]$ we then get
$$
U'=v\rho(U)v^*=v\rho(AD\bar U_{\chi^{-1}X}(AD)^{-1}) v^*=v AD v^t \bar U'_{X'} \bar v (AD)^{-1}v^*.
$$
On the other hand, $U'=A'\bar U'_{X'}{A'}^{-1}$ by the relations in  $\C[O^{X',\beta}_{A'}]$. Since the matrix coefficients of $U'$ are linearly independent, it follows that
$$
v AD v^t=\lambda A'\quad\text{for some}\quad\lambda\in\C^\times.
$$
As both $A'\bar{A'}$ and $AD\overline{AD}$ are unitary (recall that $D=\overline{X(t_\chi)}$ is unitary and $AD$ coincides with~$\bar DA$ up to a phase factor), we must have $\lambda\in\T$. Hence, by multiplying $v$ by a phase factor we can achieve that $v AD v^t=A'$, while the equality $v Xv^*=\chi X'$ is still satisfied.
\ep

Note that, for fixed $(A,X)$ and $(A',X')$, there can only be finitely many $\chi$ such $vXv^*=\chi X'$ for some~$v$. Furthermore, if $\hat T$ is torsion-free (equivalently, $T$ is connected), then the only possible candidate for such $\chi$ is $w{w'}^{-1}$, since a nontrivial translation of a finite symmetric subset of $\hat T$ is never symmetric. Once $\chi$ is fixed, the question whether there is $v$ satisfying both conditions in~\eqref{eq:iso-conditions} can be solved by writing $(AD,\chi^{-1}X)$ and $(A',X')$ in a standard form in the following sense.

\begin{lemma}
Assume $A\in\mathrm{GL}_m(\C)$ ($m\ge2$) is a matrix such that $A\bar A$ is unitary and $X=(x_{ij})_{i,j}\in\Mat_m(\C[T])$ is a unitary corepresentation matrix such that $A(w_0 \bar{X})A^{-1}=X$ for some $w_0\in\hat T$. Then there is $v\in\operatorname{U}(m)$ such that $vAv^t$ and $vXv^*$ are block-diagonal matrices $\diag(A_1,\dots,A_k)$ and $\diag(X_1,\dots,X_k)$, where the blocks are
\begin{itemize}
  \item[--] either $2\times 2$ matrices
$$
A_i=\begin{pmatrix}
      0 & \theta_i\lambda_i^{-1} \\
      \lambda_i & 0
    \end{pmatrix},\qquad X_i=\begin{pmatrix}
                               \chi_i & 0 \\
                               0 & w_0\bar\chi_i
                             \end{pmatrix},
$$
with $0<\lambda_i\le1$, $\theta_i\in\T$ and $\chi_i\in\hat T$ such that if $\lambda_i=1$, then either $0<\arg\theta_i\le\pi$ or ($\theta_i=1$ and $\chi^2_i\ne w_0$);
\item[--] or $1\times1$ matrices $A_i=1$, $X_i=\chi_i$, with $\chi_i\in \hat T$ such that $\chi^2_i=w_0$.
\end{itemize}

\end{lemma}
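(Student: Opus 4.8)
The plan is to diagonalise $X$ as a representation of $T$, reduce to a case analysis over the orbits of a natural involution on the set of occurring characters, and normalise each orbit separately.

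First I would use that $X$ defines a finite-dimensional unitary representation $t\mapsto X(t)$ of the compact abelian group $T$, so there is $v_0\in\mathrm{U}(m)$ with $v_0Xv_0^*=\bigoplus_\chi\chi\,1_{W_\chi}$, where $W_\chi\subset\C^m$ is the coordinate subspace on which $X$ acts by $\chi$. Replacing $(A,X)$ by $(v_0Av_0^t,\,v_0Xv_0^*)$ is harmless: using $v_0^t\bar v_0=1$ one checks that the congruence $A\mapsto v_0Av_0^t$ paired with the conjugation $X\mapsto v_0Xv_0^*$ preserves both the unitarity of $A\bar A$ and the identity $A(w_0\bar X)A^{-1}=X$. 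With $X$ diagonal this identity forces $A(W_\chi)\subset W_{\sigma\chi}$ with $\sigma\chi:=w_0\bar\chi$; since $A$ is invertible, $\sigma$ is an involution of the finite set of occurring characters, $\dim W_\chi=\dim W_{\sigma\chi}$, and from now on I would only use unitaries $v=\bigoplus_\chi v_\chi$ preserving the decomposition $\bigoplus_\chi W_\chi$, which act on the $\sigma$-orbits independently. It therefore suffices to treat one orbit at a time.

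For a two-point orbit $\{\chi,\chi'\}$, $\chi'=\sigma\chi\neq\chi$, write $A$ restricted to $W_\chi\oplus W_{\chi'}$ as $\left(\begin{smallmatrix}0&B'\\ B&0\end{smallmatrix}\right)$ in the coordinate bases; then $A\bar A$ unitary is equivalent to $u:=B'\bar B$ and $B\bar{B'}$ being unitary. A short computation with the rule $\overline M^*=M^t$ then shows that $B$ and $B'$ are \emph{normal} and that $\overline{B'}^*\overline{B'}=(B^*B)^{-1}$. Since $\overline{B'}$ is normal it commutes with $\overline{B'}^*\overline{B'}=(B^*B)^{-1}$, hence with $B^*B$; and $B$, being normal, commutes with $B^*B$; so $B^*B$ commutes with $\overline{B'}B=\bar u$. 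Choosing a unitary $Q$ simultaneously diagonalising $B^*B$ and $\bar u$, setting $D:=(QB^*BQ^*)^{1/2}$ and $P_0:=BQ^*D^{-1}$ (so that $B=P_0DQ$ is a singular value decomposition with this $Q$), and taking $v_\chi=\bar Q$, $v_{\chi'}=P_0^*$, one gets $v_{\chi'}Bv_\chi^t=D$ and $v_\chi B'v_{\chi'}^t$ diagonal with entries of modulus $D_{ii}^{-1}$. Thus on this orbit $A$ becomes $\bigoplus_i\left(\begin{smallmatrix}0&\theta_i\lambda_i^{-1}\\ \lambda_i&0\end{smallmatrix}\right)$ with $\lambda_i>0$ and $|\theta_i|=1$; applying inside a single $2\times2$ block the flip $\left(\begin{smallmatrix}0&1\\1&0\end{smallmatrix}\right)$ (which merely interchanges the two character labels) and diagonal phase congruences, one arranges $0<\lambda_i\le1$ and, when $\lambda_i=1$, either $0<\arg\theta_i\le\pi$ or $\theta_i=1$ — the latter being legitimate since $\chi^2\neq w_0$ on a genuine two-point orbit.

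For the fixed point $\chi_0$ (so $\chi_0^2=w_0$ and $X=\chi_0\,1$ on $W_{\chi_0}$), the matrix $B_0:=A|_{W_{\chi_0}}$ satisfies $B_0\bar B_0$ unitary, so by \cite[Proposition~1.5]{HN21} a unitary congruence brings $B_0$ to the anti-diagonal shape \eqref{eq:TL-matrix}; pairing the coordinates $j$ and $d+1-j$ turns this into a sum of $2\times2$ anti-diagonal blocks plus, for odd $d$, a single $1\times1$ block, and phase congruences normalise these to the forms in the statement (a $1\times1$ block always being reducible to $(1)$ since its entry has modulus $1$, and on $W_{\chi_0}$ every unitary congruence fixes $X=\chi_0 1$). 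The only remaining issue is a $2\times2$ block $\left(\begin{smallmatrix}0&1\\1&0\end{smallmatrix}\right)$ with $X=\chi_0 1$, which is not allowed when $\chi_0^2=w_0$: conjugating it by the Hadamard unitary $\tfrac1{\sqrt2}\left(\begin{smallmatrix}1&1\\1&-1\end{smallmatrix}\right)$ leaves $X$ fixed and splits it into two $1\times1$ blocks, which phase congruences turn into $(1)\oplus(1)$. Composing all the unitaries used produces the required $v\in\mathrm{U}(m)$. The crux is the two-point-orbit step: the hypothesis that $A\bar A$ (rather than $A\bar A=\pm1$) be unitary is used precisely to force the normality of $B,B'$ and the identity $\overline{B'}^*\overline{B'}=(B^*B)^{-1}$, and hence the commutation that makes the simultaneous diagonalisation work; keeping the bar/transpose/adjoint bookkeeping straight there is the fiddly part.
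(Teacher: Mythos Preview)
Your orbit-by-orbit strategy differs from the paper's, which works uniformly via the polar decomposition $AJ=u|AJ|$ of the antilinear operator $AJ$ and the joint spectral decomposition of the commuting family $\{|AJ|,\,u^2,\,X(t)\}$. Your route can be made to work, but the argument at what you yourself flag as ``the crux'' contains a genuine error.

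The claim that $B$ and $B'$ are normal is false. For a counterexample take $B$ real and non-normal, say $B=\left(\begin{smallmatrix}1&1\\0&1\end{smallmatrix}\right)$, and $B'=B^{-1}$; then $A\bar A=\left(\begin{smallmatrix}B'\bar B&0\\0&B\bar{B'}\end{smallmatrix}\right)=1$ is unitary while $B$ is not normal. Your identity $\overline{B'}^{\,*}\overline{B'}=(B^*B)^{-1}$ is likewise incorrect: what unitarity of $B'\bar B$ and of $B\bar{B'}$ actually gives is
\[
\overline{B'}^{\,*}\overline{B'}=(BB^*)^{-1}\qquad\text{and}\qquad \overline{B'}\,\overline{B'}^{\,*}=(B^*B)^{-1},
\]
and these two relations together do not force either $B$ or $B'$ to be normal.

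The step is, however, repairable without normality. From $B^*B=(\overline{B'}\,\overline{B'}^{\,*})^{-1}$ one computes
\[
\overline{B'}^{\,-1}(B^*B)\,\overline{B'}=(\overline{B'}^{\,*}\overline{B'})^{-1}=BB^*=B(B^*B)B^{-1},
\]
hence $B^*B$ commutes with $\overline{B'}B$. Since $\overline{B'}B=\overline{B'\bar B}$ is itself unitary, a unitary $Q$ simultaneously diagonalising $B^*B$ and $\overline{B'}B$ exists, and your construction of $D$, $P_0$, $v_\chi=\bar Q$, $v_{\chi'}=P_0^*$ then yields $v_{\chi'}Bv_\chi^t=D$ and $v_\chi B'v_{\chi'}^t=\overline{Q\,\overline{B'}B\,Q^*}\,D^{-1}$, both diagonal as required. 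With this correction the two-point-orbit case goes through; your treatment of the fixed-point case is fine as written.
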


\bp
The lemma can be viewed as a refinement of \cite[Proposition~1.5]{HN21}, but the proof is almost the same, so we will be brief.

First one observes that classification of the pairs $(A,X)$ up to the transformations $A\mapsto vAv^t$ and $X\mapsto vXv^*$ is the same as classification of the pairs $(AJ,X)$ up to unitary conjugacy, where $J\colon\C^m\to\C^m$ is the complex conjugation. Consider the polar decomposition $AJ=u|AJ|$, so~$|AJ|$ is positive and $u$ is anti-unitary. Then the joint spectrum, considered together with multiplicities, of the commuting operators $|AJ|$, $u^2$ and $X(t)$ ($t\in T$) is a complete invariant of the unitary conjugacy class, and a standard form of~$(A,X)$ as in the formulation of the lemma is obtained from an appropriate orthonormal basis diagonalizing these operators, as follows.

For $\lambda>0$, $\theta\in\T$ and $\chi\in\hat T$, consider the space
$$
H_{\lambda,\theta,\chi}=\{\xi\in\C^m: |AJ|\xi=\lambda\xi,\ u^2\xi=\theta\xi,\ X(t)\xi=\chi(t)\xi\ \text{for all}\ t\in T\}.
$$
As $u|AJ|=|AJ|^{-1}u$ and $w_0u X=u \bar w_0X=Xu$, we have $uH_{\lambda,\theta,\chi}=H_{\lambda^{-1},\bar\theta,w_0\bar\chi}$. In particular, the spaces $H_{\lambda,\theta,\chi}+H_{\lambda^{-1},\bar\theta,w_0\bar\chi}$ are invariant under $AJ$ and $X$. For every pair of triples $(\lambda,\theta,\chi)$ and $(\lambda^{-1},\bar\theta,w_0\bar\chi)$ such that $H_{\lambda,\theta,\chi}\ne0$, pick a representative and consider three cases.

1) Assume $(\lambda,\theta,\chi)\ne (\lambda^{-1},\bar\theta,w_0\bar\chi)$. Then, by changing the representative if necessary, we may also assume that either $\lambda<1$ or ($\lambda=1$ and $0\le\arg\theta\le\pi$). Choose an orthonormal basis~$(\xi_j)_j$ in~$H_{\lambda,\theta,\chi}$. Then $(u\xi_j)_j$ is an orthonormal basis in $H_{\lambda^{-1},\bar\theta,w_0\bar\chi}$, and the restrictions of~$AJ$ and~$X$ to every $2$-dimensional space with basis $\{\xi_j,u\xi_j\}$ have the required form.

2) Assume $\lambda=1$, $\theta=-1$, $\chi^2=w_0$. In this case $u\xi\perp\xi$ for every $\xi\in H_{1,-1,\chi}$. Hence we can find an orthonormal system $(\xi_j)_j$ in $H_{1,-1,\chi}$ such that $(\xi_j)_j\cup (u\xi_j)_j$ is an orthonormal basis in $H_{1,-1,\chi}$. The restrictions of $AJ$ and $X$ to every $2$-dimensional space with basis $\{\xi_j,u\xi_j\}$ have the required form.

3) Finally, assume $\lambda=1$, $\theta=1$, $\chi^2=w_0$. Then $\{\xi\in H_{1,1,\chi}\mid u\xi=\xi\}$ is a real form of $H_{1,1,\chi}$. By choosing an orthonormal basis in this Euclidean space we get a decomposition of $H_{1,1,\chi}$ into a direct sum of one-dimensional spaces that are invariant under $AJ$ and $X$.
\ep

From the proof one can see that the only source of nonuniqueness of the standard form of $(A,X)$, apart from the order of the blocks, is the choice of a representative from each pair of the triples $(1,\pm1,\chi)$ and $(1,\pm1,w_0\bar\chi)$ with $\chi^2\ne w_0$.

\begin{remark}\label{rem:TL-standard}
If $\hat T$ has zero $2$-torsion, then there is at most one $\chi$ such that $\chi^2=w_0$. Then, as in~\cite{HN21}, we can choose an orthonormal basis $(\xi_j)^l_{j=1}$ in $H_{1,1,\chi}$ such that $u\xi_j=\xi_{l-j+1}$. It follows that instead of making $vAv^t$ block-diagonal, we can make it to be of the form~\eqref{eq:TL-matrix}, with $0<a_i\le1$ for $i\le[\frac{m+1}{2}]$ and $0\le\arg a_i\le\pi$ for $i>[\frac{m+1}{2}]$ whenever $|a_i|=1$.\ee
\end{remark}

We can now parameterize the isomorphism classes of braided free orthogonal quantum groups $O^{X,\beta}_A$, at least when $\hat T$ is torsion-free. We already know that it suffices to consider only the transmutations of free orthogonal quantum groups, that is, we may assume that $A\bar A=\pm1$ and $w=1$ (the trivial character), and that then the only choice for $\chi$ in~\eqref{eq:iso-conditions} is $\chi=1$. Hence we get the following result.

\begin{corollary}
Consider a compact connected abelian group $T$ and a bicharacter $\beta\colon\hat T\times\hat T\to\T$. Then representatives $\C[O^{X,\beta}_A]\in\hopf(\C[T],\beta)$ of all isomorphism classes of braided free orthogonal quantum groups are obtained by considering the pairs $(A,X)$ such that
\begin{itemize}
  \item[--] the matrix $A$ has the form~\eqref{eq:TL-matrix}, with $0<a_i\le1$ for $i\le[\frac{m+1}{2}]$ and $a_ia_{m-i+1}=\tau$ for all $i$, where $\tau=\pm1$ (hence $a_{\frac{m+1}{2}}=1$ and $\tau=1$ if $m$ is odd);
  \item[--] the corepresentation matrix $X$ is diagonal, $X=\diag(\chi_1,\dots,\chi_m)$, with $\chi_i\chi_{m-i+1}=1$ for all $i$ (hence $\chi_{\frac{m+1}{2}}=1$ if $m$ is odd).
\end{itemize}
Two such pairs $(A,X)$ and $(A',X')$ define isomorphic braided Hopf $*$-algebras over $(\C[T],\beta)$ if and only if $(A',X')$ can be obtained from $(A,X)$ by
\begin{itemize}
  \item[--] permuting the pairs $(a_i,a_{m-i+1})$ and $(\chi_i,\chi_{m-i+1})$ ($i\le[\frac{m}{2}]$);
  \item[--] replacing $\chi_i$ by $\chi_{m-i+1}$ and $\chi_{m-i+1}$ by $\chi_i$ for some $i\le[\frac{m}{2}]$ such that $a_i=1$.
\end{itemize}
\end{corollary}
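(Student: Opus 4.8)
The plan is to reduce this to the unitary classification of pairs $(A,X)$ already established above, using torsion-freeness of $\hat T$ at two points. \emph{Step 1 (reduce to transmutations of free orthogonal quantum groups):} by Corollary~\ref{cor:free-orthogonal2}, every $\C[O^{X,\beta}_A]$ is isomorphic to $\C[O^{w^{-1}X,\beta}_{AC}]$, and $F:=AC$ satisfies $F\bar F=\tau1$ and $F\,\overline{w^{-1}X}\,F^{-1}=w^{-1}X$, so I may assume from the outset that $w=1$, $A\bar A=\tau1$ for some sign $\tau$, and $A\bar XA^{-1}=X$. \emph{Step 2 (translate ``isomorphic'' into ``unitarily equivalent''):} for two such pairs, the classification proposition above gives $\C[O^{X,\beta}_A]\cong\C[O^{X',\beta}_{A'}]$ over $(\C[T],\beta)$ iff $m=m'$ and $vXv^*=\chi X'$, $vADv^t=A'$ for some $v\in\operatorname{U}(m)$ and $\chi\in\hat T$, with $D=\overline{X(t_\chi)}$; since $T$ is connected, the remark following that proposition forces $\chi=w{w'}^{-1}=1$, whence $D=\overline{X(t_1)}=1$ (as $t_1=e$ and $X(e)=1$) and the condition simplifies to $vXv^*=X'$, $vAv^t=A'$ for a unitary $v$.

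\emph{Step 3 (normal form and the list of representatives):} here I would apply the standard-form lemma with $w_0=1$ together with Remark~\ref{rem:TL-standard} --- since $\hat T$ is torsion-free it has vanishing $2$-torsion and $1$ is its only square root of $1$, so a suitable $v\in\operatorname{U}(m)$ makes $vAv^t$ of the form~\eqref{eq:TL-matrix} and $vXv^*$ diagonal, say $\diag(\chi_1,\dots,\chi_m)$. Conjugation by an anti-diagonal matrix reverses the order of diagonal entries, so the identities $A\bar A=\tau1$ and $A\bar XA^{-1}=X$ turn into $a_ia_{m-i+1}=\tau$ and $\chi_i\chi_{m-i+1}=1$ for all $i$; combined with the normalization $0<a_i\le1$ for $i\le[\frac{m+1}{2}]$ of~\eqref{eq:TL-matrix} this is exactly the claimed family of pairs (and for $m$ odd the middle index forces $a_{\frac{m+1}{2}}=1$, $\chi_{\frac{m+1}{2}}=1$, $\tau=1$). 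This establishes that every isomorphism class has a representative on the list.

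\emph{Step 4 (the equivalence conditions):} for this I would invoke the uniqueness statement following the standard-form lemma, according to which the standard form is unique up to reordering of blocks and, for each pair of triples $(1,\pm1,\chi)$, $(1,\pm1,w_0\bar\chi)$ with $\chi^2\ne w_0$, the choice of a representative. With $w_0=1$: reordering of blocks is exactly permutation of the pairs $(a_i,a_{m-i+1})$, $(\chi_i,\chi_{m-i+1})$; a $2\times2$ block carries $\lambda=1$ precisely when the corresponding $a_i$ equals $1$, and then the residual choice $\chi\leftrightarrow w_0\bar\chi=\chi_i^{-1}=\chi_{m-i+1}$ is precisely the interchange of $\chi_i$ and $\chi_{m-i+1}$ --- a genuine move when $\chi_i\ne1$ (so $\chi_i^2\ne1$) and trivial otherwise. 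Hence two normal forms from the list are unitarily equivalent, so by Step~2 define isomorphic braided Hopf $*$-algebras, if and only if they differ by the two moves described; the converse is immediate since those moves manifestly preserve the unitary equivalence class, hence the isomorphism class. The main obstacle is exactly this last translation: the standard form is phrased in the lemma in block-diagonal form with the triples $(\lambda,\theta,\chi)$, whereas the statement uses the anti-diagonal form~\eqref{eq:TL-matrix}, and one has to use carefully that $A\bar A=\tau1$ pins $\theta=\tau$ in every $2\times2$ block (and excludes $1\times1$ blocks when $\tau=-1$), so that no further continuous or discrete parameters remain and the residual ambiguity is precisely the one listed.
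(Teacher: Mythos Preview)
Your proposal is correct and follows essentially the same route as the paper: the paper reduces to $w=1$ and $A\bar A=\pm1$ via Corollary~\ref{cor:free-orthogonal2}, observes that torsion-freeness of $\hat T$ forces $\chi=1$ (hence $D=1$) in the isomorphism criterion~\eqref{eq:iso-conditions}, and then invokes the standard-form lemma together with Remark~\ref{rem:TL-standard}; the corollary is then stated without a formal proof. Your Steps~1--4 unpack exactly this reasoning, including the point that $A\bar A=\tau1$ pins $\theta_i=\tau$ in every $2\times2$ block, so the residual ambiguity in the standard form matches the two listed moves.
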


Now, observe that the quantum group~\eqref{eq:Uq(2)-type} is isomorphic to $U_q(2)$ when $\chi$ is nontrivial and $T/\ker\chi\cong\T$. All compact quantum groups monoidally equivalent to $U_q(2)$ are known by the work of Mrozinski~\cite{Mrozinski}. Our next goal is to describe the subclass of these quantum groups that can be obtained as the bosonizations of $\C[O^{X,\beta}_A]\in\hopf(\C[T/T_0],i^*\beta)$ for suitable $T_0$.

Assume that $B \in \mathrm{GL}_m(\C)$ is such that $B\bar{B}$ is unitary. Following the conventions of~\cite{HN22},  consider the quantum group $\tilde{O}_B^+$ of unitary transformations leaving the noncommutative polynomial $P=\sum_{i,j}B_{ji}X_iX_j$ invariant up to a phase factor: $\C[\tilde{O}_B^+]$ is the universal unital $*$-algebra generated by a unitary $d$ and elements $w_{ij}$, $1 \leq i,j \leq m$, such that
\begin{equation} \label{eq:Mrozinski}
 W = (w_{ij})_{i,j} \mbox{ is unitary and } W = B\bar{W} B^{-1}d,
\end{equation}
and the coproduct is given by $\Delta(w_{ij})=\sum_k w_{ik}\otimes w_{kj}$, $\Delta(d)=d\otimes d$. We remark that in~\cite{Mrozinski} the Hopf $*$-algebra $\C[\tilde{O}_B^+]$ is denoted by~$A_{\tilde{o}}(\bar B^t)$.

\begin{proposition}\label{prop:Mrozinski}
Assume that $B \in \mathrm{GL}_m(\C)$ ($m\ge2$) is such that $B\bar{B}$ is unitary. Then the following conditions are equivalent:
\begin{itemize}
\item[1)] $\C[\tilde{O}_B^+]$ is the bosonization of $\C[O^{X,\beta}_B]\in\hopf(\C[\T/\{\pm1\}],i^*\beta)$ for some unitary corepresentation matrix $X\in\Mat_m(\C[\T])$ and a bicharacter $\beta$ on $\Z=\hat\T$ as in Proposition~\ref{prop:G_A} with $X(-1)=1$ and $w=z^{-1}$;
\item[2)] $\C[\tilde{O}_B^+]$ is the bosonization of a braided Hopf $*$-algebra over $(\C[\T], \beta)$ for some bicharacter~$\beta$ on~$\Z=\hat\T$;
\item[3)] $m$ is even and the spectrum of $B\bar B$ consists of odd powers of a single number $\lambda\in \T$.
\end{itemize}
\end{proposition}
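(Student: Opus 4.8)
The plan is to establish the cycle $1)\Rightarrow 2)\Rightarrow 3)\Rightarrow 1)$. The implication $1)\Rightarrow 2)$ is immediate: $\C[\T/\{\pm1\}]$ is isomorphic to $\C[\T]$ as a Hopf $*$-algebra (send $z^2$ to the generator), so a braided Hopf $*$-algebra over the former is in particular one over the latter.

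For $3)\Rightarrow 1)$ I would exhibit the quadruple explicitly. By~\cite[Proposition~1.5]{HN21} one may assume $B$ has the anti-diagonal form~\eqref{eq:TL-matrix}, so that the eigenvalues of $B\bar B$ are $\bar\lambda_1,\dots,\bar\lambda_m$ with $\lambda_j=a_j\bar a_{m-j+1}$ and $\lambda_{m-j+1}=\bar\lambda_j$. Using~3) one writes $\lambda_j=\lambda^{d_j}$ with all $d_j$ odd and $d_{m-j+1}=-d_j$; in particular each pair $\{j,m-j+1\}$ consists of two distinct indices, so $m$ is even. Now choose $\mu\in\T$ with $\mu^2=\lambda$, put $\beta=\beta_\mu$, $w=z^{-1}$, and $X=\diag(z^{2n_1},\dots,z^{2n_m})$ with $n_j=(d_j-1)/2$. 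A routine check gives $X(-1)=1$; the identity $n_j+n_{m-j+1}=-1$ is equivalent to~\eqref{eq:BFO-condition}; and a short computation with $t_w=\mu$ yields~\eqref{eq:BFO-condition2} with $\tau=1$, together with $F\bar F=1$ for $F=BC$. Thus $\C[O^{X,\beta}_B]$ is defined, and since $X(-1)=1$ while $(w^{-1}X)(-1)=-1$, the subgroup $\{\pm1\}$ is $\C[O^+_F]$-cocentral, so $\C[O^{X,\beta}_B]\in\hopf(\C[\T/\{\pm1\}],i^*\beta)$. It remains to identify its bosonization with $\C[\tilde O_B^+]$: writing $\tilde z=z^2$ and working in the basis where $X$ is diagonal, the fundamental corepresentation matrix of the bosonization is $W=\diag(\tilde z^{\,n_i})(1\#u_{ij})_{i,j}$, and a computation using the $*$-structure~\eqref{eq:braided-star} on the bosonization, the relation $U=B\bar U_XB^{-1}$ from Proposition~\ref{prop:G_A}, and $n_j+n_{m-j+1}=-1$ gives $W=B\bar WB^{-1}\tilde z^{-1}$. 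Setting $d=\tilde z^{-1}$, these are exactly the defining relations~\eqref{eq:Mrozinski}, so universality of $\C[\tilde O_B^+]$ gives a surjection onto the bosonization, which is an isomorphism by the argument in the last step of the proof of Proposition~\ref{prop:O_F^+-universal}.

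For $2)\Rightarrow 3)$ I would run this computation backwards. Assume $\C[\tilde O_B^+]\cong\C[\T]\#A$ with $A\in\hopf(\C[\T],\beta)$, $\beta=\beta_\mu$. By Proposition~\ref{prop:braided-comodule} the fundamental corepresentation $W$ of $\C[\tilde O_B^+]$ corresponds to an $A$-comodule $(Z,U)$; since $W$ generates $\C[\tilde O_B^+]$ as a compact quantum group, the entries of $U$ and $U^*$ generate $A$, so $A$ is a braided compact matrix quantum group over $(\T,Z,\beta)$ with $Z=\diag(z^{k_1},\dots,z^{k_m})$ after a change of basis. Applying $\iota\#\eps_A\colon\C[\T]\#A\to\C[\T]$ and then $\eps_{\C[\T]}\#\iota$ to the relation $W=B\bar WB^{-1}d$ shows that $Z=B\bar ZB^{-1}z^l$ (with $z^l$ the image of $d$) and that $U$ is identified, as an $A$-comodule, with a twist of its conjugate; by the universality in Propositions~\ref{prop:O_F^+-universal} and~\ref{prop:G_A} this exhibits $A$, up to an overall character in the grading, as $\C[O^{X,\beta}_B]$ for a diagonal $X$ and a character $w$. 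Compatibility of~\eqref{eq:BFO-condition} with $Z=B\bar ZB^{-1}z^l$ forces $w^2$ to be the square of a character, so after replacing the generator of $\C[\T]$ by one whose square is the old one (i.e.\ passing to the appropriate double cover) one may take $w=z^{-1}$ and $X(-1)=1$, which is the normalization in~1). Then~\eqref{eq:BFO-condition} says the multiset of exponents of $X$ is stable under the involution $k\mapsto-1-k$, which is fixed-point-free (a fixed point would require a half-integer exponent), so $m$ is even; and~\eqref{eq:BFO-condition2} reads $B\bar B=\tau\mu^{-2}\diag(\mu^{-4n_j})$, whose eigenvalues $\tau(\mu^{-2})^{1+2n_j}$ involve only the odd exponents $1+2n_j$. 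Thus $\mathrm{spec}(B\bar B)$ is the set of odd powers of $\mu^{-2}$ if $\tau=1$, and of $-\mu^{-2}$ if $\tau=-1$, which is~3).

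The main obstacle is this last implication, and within it the normalization step: showing that the $\C[\T]$-retraction of $\C[\tilde O_B^+]$ can, after passing to a double cover of $\T$, be brought into the shape of~1), i.e.\ that the grouplike $d$ becomes (a twist of) the odd character $z^{-1}$ and the accompanying braided free orthogonal structure has $X(-1)$ scalar. This is precisely what produces, in one stroke, both ``$m$ even'' and ``odd powers of a single $\lambda$''; everything else — the $*$-calculus on the bosonization, the identification of $A$ via Propositions~\ref{prop:O_F^+-universal} and~\ref{prop:G_A}, and the final reading-off of the spectrum from~\eqref{eq:BFO-condition2} — is bookkeeping with the results of this section.
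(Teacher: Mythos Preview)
Your $1)\Rightarrow 2)$ and $3)\Rightarrow 1)$ are essentially the paper's arguments (you use a square root $\mu$ of $\lambda$ directly where the paper passes to a double cover, which amounts to the same thing).

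Your $2)\Rightarrow 3)$, however, has a genuine gap. The critical missing step is establishing that $d=(z\#1)^{\pm1}$ in $\C[\T]\#A$. The paper obtains this from the fusion rules: since $\tilde O_B^+$ has the fusion rules of $U(2)$, its only group-like elements are the powers of $d$; as $z\#1$ is group-like, it must equal $d^{\pm1}$. Without this, your argument does not go through. Applying $\eps_{\C[\T]}\#\iota$ to $W=B\bar WB^{-1}d$ gives no clean relation on $U$, because $d$ is not known to lie in $\C[\T]\#1$ (so $(\eps_{\C[\T]}\#\iota)(d)$ is some unidentified element of $A$, not a scalar). Your subsequent claim that ``universality in Propositions~\ref{prop:O_F^+-universal} and~\ref{prop:G_A} exhibits $A$ as $\C[O^{X,\beta}_B]$'' is unjustified: those propositions describe the universal algebra with certain relations, but do not say that every $A$ whose bosonization is $\C[\tilde O_B^+]$ must be of that form. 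And the ``normalization step'' of passing to a double cover is not available in this direction --- the torus and bicharacter are given, not chosen.

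Once $d=z\#1$ is in hand (after possibly replacing $B$ by $\bar B^{-1}$), the paper's route is also more direct than the one you sketch. Instead of trying to identify $A$, one applies $\iota\#\eps_A$ to get $X=B\bar XB^{-1}z$, whence in the standard form $k_i+k_{m-i+1}=1$ and $m$ is even. Then one computes $dWd^*$ in two ways: the smash-product commutation in $\C[\T]\#A$ gives $dWd^*=(\beta(k_i-k_j,1)\,w_{ij})_{i,j}$, while the defining relation $W=B\bar WB^{-1}d$ gives $dWd^*=B\bar B\,W\,(B\bar B)^{-1}=(\lambda_i\bar\lambda_j\,w_{ij})_{i,j}$. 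Equating these forces $\lambda_i=\lambda\zeta^{k_i}$ with $\zeta=\beta(1,1)$ and $\lambda^{-2}=\zeta$, i.e.\ $\lambda_i=\lambda^{1-2k_i}$, which is exactly condition~3).
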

\begin{proof}
1) $\Rightarrow$ 2) is obvious.

\smallskip

2) $\Rightarrow$ 3): As $\tilde{O}_B^+$ has fusion rules of $U(2)$, the group-like elements of $\C[\tilde{O}_B^+]$ are powers of~$d$. It follows that if we can identify $\C[\tilde{O}_B^+]$ with $\C[\T] \# A$ for some $A\in\hopf(\C[\T],\beta)$, then $z\#1=d$  or $z\#1=d^{-1}$. By replacing $B$ by $\bar B^{-1}$ if necessary, we may assume that $z\#1=d$, since there is an isomorphism $\tilde{O}_B^+\cong \tilde{O}_{\bar B^{-1}}^+$ mapping $d$ into $d^{-1}$, see~\cite[Proposition~3.4]{Mrozinski}.

Consider the Hopf $*$-algebra map $\iota\#\eps_A \colon \C[\T] \# A\to\C[\T]$. Put $X=(\iota\#\eps_A)(W)$. Then from the defining relations for $\tilde{O}_B^+$ we get that $X$ is a unitary corepresentation matrix and $X = B\bar{X} B^{-1}z$. By Remark~\ref{rem:TL-standard} we may assume that
\begin{equation} \label{eq:B}
B = \begin{pmatrix}
0 & & b_m \\
 & \iddots & \\
b_1 & & 0
\end{pmatrix}, \quad b_i\bar{b}_{m-i+1} = \lambda_i \in \T,
\end{equation}
and $X = \mathrm{diag}(z^{k_1}, z^{k_2},...,z^{k_m})$ for some $k_i\in \Z$, $1 \leq i \leq m$. The identity $X = B\bar{X} B^{-1}z$ means then that $k_i + k_{m-i+1} = 1$ for all $i$. This implies that $m$ must be even.

Next, consider $U=(\eps_{\C[\T]}\#\iota)(W)\in\Mat_m(A)$. On the one hand, by~\eqref{eq:condition}, we have
$$
\delta_A(u_{ij})=u_{ij}\otimes z^{k_j-k_i}.
$$
As $W=X\# U=(z^{k_i}\# u_{ij})_{i,j}$ and $z\#1=d$, by the definition of the smash product $\C[\T] \# A$ it follows that
$$
dWd^*=(\beta(k_i-k_j,1)w_{ij})_{i,j}.
$$

On the other hand, the relation $W = B\bar{W} B^{-1}d$ implies $\bar W=d^*\bar B W\bar B^{-1}$, hence
$$
dWd^*=B\bar B W (B\bar B)^{-1}=(\lambda_i\bar\lambda_j w_{ij})_{i,j}.
$$
We therefore have that $\beta(k_i-k_j,1) = \lambda_i\bar{\lambda}_j$ for all $i,j$, whence $\lambda_i = \lambda\zeta^{k_i}$ for some $\lambda\in \T$, where $\zeta=\beta(1,1)$. As
$\bar \lambda_i=\lambda_{m-i+1}$ and $k_i+k_{m-i+1}=1$, we must have $\lambda^{-2}=\zeta$ and thus $\lambda_i = \lambda^{-2k_i + 1}$.

\smallskip

3) $\Rightarrow$ 1): We may assume that $B$ is of the form~\eqref{eq:B}. By the assumption on the spectrum there exist $l_i\in\Z$ for $i=1,\dots,m/2$ such that $\lambda_i=\lambda^{-2l_i-1}$. Let $l_i=-1-l_{m-i+1}$ for $i=m/2+1,\dots,m$. Then  $\lambda_i=\lambda^{-2l_i-1}$ for all $i$.

Put $X=\mathrm{diag}(z^{l_1}, z^{l_2},...,z^{l_m})$. Define a bicharacter $\beta$ on $\hat\T$ by $\beta(k,l)=\lambda^{-2kl}$. We then have $X = B\bar{X} B^{-1}z^{-1}$ and $B\bar B=\lambda^{-1} X(\lambda^{-2})$. Consider the double cover $p\colon\T\to\T$, $t\mapsto t^2$. Denote by $\beta^{1/4}$ the bicharacter on~$\hat\T$ defined in the same way as~$\beta$, but with $\lambda^2$ replaced by one of its fourth roots. Therefore if $i\colon\C[\T]\to\C[\T]$ is defined by $i(z)=z^2$, we get $i^*\beta^{1/4}=\beta$.
By Remark~\ref{rem:BFO} the braided quantum group  $O_B^{Xp, \beta^{1/4}}$ is well-defined. As $(Xp)(-1)=1$, we can view $\C[O_B^{Xp, \beta^{1/4}}]$ as an object $A$ in $\hopf(\C[\T/\{\pm1\}],i^*\beta^{1/4})=\hopf(\C[\T],\beta)$.

The defining relations in $A$ say that $B$ is an intertwiner of the $A$-comodules defined by $(z^{-1}\bar{X},\bar U_X)$ and $(X, U)$. In other words, for $W'=X\# U=(z^{l_i}\# u_{ij})_{i,j}$ we have
$$
B(z^{-1}\#1)\overline{W'}=W'B.
$$
The relation  $W = B\bar{W} B^{-1}d$ in $\C[\tilde O^+_B]$ can be written as
$$
d^*W = Bd^*\overline{d^*W}B^{-1}.
$$
It follows that we have a well-defined Hopf $*$-algebra map $\C[\tilde O^+_B]\to \C[\T]\# A$ such that $d\mapsto z\#1$ and $W=(w_{ij})_{i,j}\mapsto zX\#U=(z^{l_i+1}\# u_{ij})_{i,j}$. Using the relations in both algebras it is also easy to construct the inverse map.
\end{proof}

\begin{remark}
The proof of the proposition implies that the procedure described in the proof of the implication 3) $\Rightarrow$ 1) is the only way of getting decompositions $\C[\tilde O^+_B]=\C[\T]\# A$ such that $A\in\hopf(\C[\T],\beta)$ and $d=z\#1$.
\end{remark}

\begin{remark}
As in the proof of the proposition, it is not difficult to see that given $B \in \mathrm{GL}_m(\C)$ such that $B\bar{B}$ is unitary, a unitary corepresentation matrix $X \in \mathrm{Mat}_m(\C[\T])$ such that $X=B\bar X B^{-1}z$ exists if and only if $m$ is even. Therefore, for even $m$, we always get a Hopf $*$-algebra map $p: \C[\tilde{O}_B^+] \to \C[\T]$ such that $p(d)=z$ with a right inverse $z\mapsto d$. By Radford's theorem~\cite{Radford} we then get a Hopf $*$-algebra object in the braided category $\mathcal{YD}(\T)$ of $\T$-Yetter–Drinfeld modules. From this perspective the above proposition characterizes when this object lies in the subcategory $(\mathcal{M}^{\C[\T]}, \beta) \subset \mathcal{YD}(\T)$ for some $\beta$.\ee
\end{remark}

Finally, let us compare the transmutations of $\C[O_F^+]$ to the braided quantum groups constructed in \cite{MR-ort}. Fix numbers $d_1, d_2,...,d_m, d \in \Z$ and consider the representation
\[ X\colon \T \to \Mat_m(\C), \quad X(t) = \begin{pmatrix}
t^{d_1} & & 0 \\
& \ddots & \\
0 & & t^{d_m}
\end{pmatrix}.\]
Take $\Omega = (\omega_{ij})_{i,j} \in \mathrm{GL}_m(\C)$ and assume
\begin{equation*}
\label{eq:Omega}
\omega_{ij} \neq 0 \, \Rightarrow \,  d_i + d_j = d,\quad \text{or equivalently},\quad
\Omega^t (z^d \bar{X} )(\Omega^t)^{-1} = X,
\end{equation*}
where $z \in \C[\T]$ is the generator. Assume further that there  is $\zeta \in \T$ such that
\[\bar{\Omega}\Omega = c \,X(\zeta^d)\quad\text{for some}\quad c\in\T.\]
Define a bicharacter on $\hat{\T} = \Z$ by $\beta(m,n) = \zeta^{mn}$.

In \cite{MR-ort}, Meyer and Roy construct an object $A_o(\Omega, X) \in \hopf(\C[\T], \beta)$ from this data.
By \cite[Theorem 2.6]{MR-ort} and \cite[Remark 2.20(2)]{BJR}, in our terminology, $A_o(\Omega, X)$ is defined as a universal braided compact matrix quantum group over $(\T, X, \beta)$ with fundamental unitary $U = (u_{ij})_{i,j}$ subject to the relations
\[ U = A\bar{U}_XA^{-1}, \]
where $A = (\omega_{ji}\zeta^{dd_j})_{i,j}=\Omega^tX(\zeta^d)=\zeta^{d^2}X(\zeta^{-d})\Omega^t$. Note that we have $\bar{U}_X = (\zeta^{d_i(d_j-d_i)}u_{ij}^*)_{i,j}$ and $A\bar{A} = c\,\zeta^{d^2}X(\zeta^{-d})$.

Assume that $d = 2n$. Put $w(t)=t^n$. Then we see from the description above and Remark~\ref{rem:BFO} that $A_o(\Omega,X) \cong \C[O_A^{X,\beta}]$ in $\hopf(\C[\T],\beta)$.

Next, assume that $d$ is odd. Then, as in the proof of Proposition~\ref{prop:Mrozinski}, $m$ is even. Similarly to that proof, consider the double cover $p\colon\T\to\T$, $t\mapsto t^2$, to write the character~$z^d$ as a square. Write $\beta^{1/4}$ for the bicharacter on~$\hat\T$ defined in the same way as~$\beta$, but with $\zeta$ replaced by a fourth root $\zeta^{1/4}$. Define a character $w$ on the double cover by $w(t)=t^d$. Then
$$
A_o(\Omega,X) \cong \C[O_A^{Xp,\beta^{1/4}}],
$$
where we now view the latter braided Hopf $*$-algebra as an object in $\hopf(\C[\T/\{\pm1\}],i^*\beta^{1/4})=\hopf(\C[\T],\beta)$.

\subsection{Braided free unitary quantum groups}
\label{ex:free-unitary} Let $m \geq 2$ be a natural number. We recall the definition of the \textit{free unitary quantum group} $U_F^+$. Let $F = (f_{ij})_{i,j} \in \mathrm{GL}_m(\C)$ be such that $\Tr(F^*F) = \Tr((F^*F)^{-1})$. Then $\C[U_F^+]$ denotes the universal $*$-algebra with generators $u_{ij}$, $1 \leq i, j \leq m$, and relations determined by
\begin{equation*}
\label{eq:free-unitary}
U = (u_{ij})_{i,j} \mbox{ and } F\bar{U}F^{-1} \mbox{ are unitaries in } \Mat_m(\C[U_F^+]).
\end{equation*}
The Hopf $*$-algebra structure on $\C[U_F^+]$ is defined so that $U_F^+$ is a compact matrix quantum group as in Definition~\ref{def:matrix}.

Similarly to the previous example, we fix a compact abelian group $T$ together with a unitary corepresentation matrix $Z\in\Mat_m(\C[T])$ such that $F\bar{Z}F^{-1}$ is unitary, equivalently, $\bar Z$ commutes with $|F|$. Then, by the universality of $\C[U_F^+]$, there is a Hopf $*$-algebra map $\pi\colon \C[U_F^+] \to \C[T]$ mapping $U$ to $Z$.
Let $\beta\colon \hat{T} \times \hat{T} \to \T$ be a bicharacter, and consider the transmutation $\C[U^+_F]_\beta$. Then, similarly to Proposition~\ref{prop:O_F^+-universal}, we get the following result.

\begin{proposition}
\label{prop:free-unitary-universal}
The braided Hopf $*$-algebra $\C[U_F^+]_\beta$ is a braided compact matrix quantum group over $(T,Z,\beta)$ with fundamental unitary $U=(u_{ij})_{i,j}$.
As a $*$-algebra, it is a universal unital $*$-algebra with generators~$u_{ij}$ satisfying the relations
\begin{equation*}
\label{eq:braided-free-unitary-1}
U\quad \mbox{and}\quad F\bar{U}_Z F^{-1}\quad\text{are unitaries},
\end{equation*}
where $\bar{U}_Z= (\bar{u}_{ij}^Z)_{i,j}$ and $\bar{u}_{ij}^Z = \sum_{s,l,t} \beta(z_{tj}^*z_{sl}, z_{il}^*)u_{st}^*$.
\end{proposition}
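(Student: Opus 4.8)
The plan is to mirror the proof of Proposition~\ref{prop:O_F^+-universal} almost verbatim, the only structural difference being that $\C[U_F^+]$ is presented by two independent unitarity constraints — on $U$ and on $F\bar UF^{-1}$ — rather than by unitarity together with a self-conjugacy relation. First I would dispose of the claim that $\C[U_F^+]_\beta$ is a braided compact matrix quantum group over $(T,Z,\beta)$ with fundamental unitary $U$: since $Z$ and $F\bar ZF^{-1}$ are unitary, the universal property of $\C[U_F^+]$ yields a Hopf $*$-algebra map $\pi\colon\C[U_F^+]\to\C[T]$ with $\pi(U)=Z$, and the assertion is then exactly Proposition~\ref{prop:MQG-trans} applied to the compact matrix quantum group $U_F^+$. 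The defining relations are obtained just as in Proposition~\ref{prop:O_F^+-universal}, via the Hopf $*$-algebra inclusion $\phi\colon\C[U_F^+]\to\C[T]\#\C[U_F^+]_\beta$ from~\eqref{eq:H-in-bos}, using that $\phi(U)=Z\#U$ and, by Lemma~\ref{lem:conjugate}, $\phi(\bar U)=\bar Z\#\bar U_Z$: applying $\phi$ to the relations ``$U$ unitary'' and ``$F\bar UF^{-1}$ unitary'' in $\C[U_F^+]$, and reading the outcome back in $\Mat_m(\C[U_F^+]_\beta)$ through the isomorphism of Proposition~\ref{prop:braided-comodule}, gives precisely that $U$ and $F\bar U_ZF^{-1}$ are unitary.

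The substance of the argument, exactly as before, is to show these relations give a complete presentation. Write $A=\C[U_F^+]_\beta$, let $\tilde A$ be the universal unital $*$-algebra on generators $\tilde u_{ij}$ subject to the requirements that $\tilde U=(\tilde u_{ij})_{i,j}$ and $F\,\tilde V\,F^{-1}$ be unitary, where $\tilde V=(\tilde v_{ij})_{i,j}$ has entries $\tilde v_{ij}=\sum_{s,l,t}\beta(z_{tj}^*z_{sl},z_{il}^*)\tilde u_{st}^*$, and let $\rho\colon\tilde A\to A$ be the surjective $*$-homomorphism with $\rho(\tilde u_{ij})=u_{ij}$. Passing to an orthonormal basis in which $Z=\diag(z_1,\dots,z_m)$ is diagonal — so that $\bar u^Z_{ij}=\beta(z_j^{-1}z_i,z_i^{-1})u_{ij}^*$ and condition~\eqref{eq:condition} reads $\delta(u_{ij})=u_{ij}\otimes z_i^{-1}z_j$ — I would check that $\tilde A$ is a $\C[T]$-comodule $*$-algebra under $\tilde\delta(\tilde u_{ij})=\tilde u_{ij}\otimes z_i^{-1}z_j$, form the smash product $\C[T]\#\tilde A=\C[T]\otimes_\beta\tilde A$, and construct a $*$-homomorphism $\tilde\phi\colon\C[U_F^+]\to\C[T]\#\tilde A$ with $\tilde\phi(u_{ij})=\sum_kz_{ik}\#\tilde u_{kj}=z_i\#\tilde u_{ij}$, verifying that $\tilde\phi(U)=Z\#\tilde U$ and $\tilde\phi(F\bar UF^{-1})=F(\bar Z\#\tilde V)F^{-1}$ are unitary over $\C[T]\#\tilde A$.

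To finish, I would introduce the linear maps $\tilde\psi\colon\C[T]\otimes\C[U_F^+]\to\C[T]\#\tilde A$, $\tilde\psi(x\otimes a)=x\tilde\phi(a)$, and $\psi\colon\C[T]\otimes\C[U_F^+]\to\C[T]\#A$, $\psi(x\otimes a)=x\pi(a_{(1)})\#a_{(2)}$, observe that $\psi=(\iota\#\rho)\tilde\psi$, invoke Theorem~\ref{thm:main} to see that $\psi$ is a linear isomorphism, note that $\tilde\psi$ is surjective because in the $Z$-diagonal basis its image contains every $1\#\tilde u_{ij}$ as well as $\C[T]\#1$, and deduce that $\tilde\psi$ is a linear isomorphism and hence that $\rho$ is an isomorphism of $*$-algebras. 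The step demanding actual care — the expected ``main obstacle'', although it is essentially bookkeeping — is checking that $\tilde\delta$ is compatible with the relation ``$F\tilde VF^{-1}$ unitary'' and that $\tilde\phi$ respects both defining relations of $\C[U_F^+]$; these are the same $\beta$-phase computations that govern the matrix $F\bar UF^{-1}$ already in the ordinary free unitary and free orthogonal settings, so no new phenomenon arises, and working in the $Z$-diagonal basis keeps all the phases transparent.
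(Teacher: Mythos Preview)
Your proposal is correct and follows exactly the approach the paper intends: the paper's own proof of Proposition~\ref{prop:free-unitary-universal} consists solely of the remark ``similarly to Proposition~\ref{prop:O_F^+-universal}'', and you have faithfully transcribed that analogy, including the use of Proposition~\ref{prop:MQG-trans} and $\phi(\bar U)=\bar Z\#\bar U_Z$ for the first part and the $\tilde A$--$\tilde\psi$--$\psi$ comparison via Theorem~\ref{thm:main} for universality.
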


By Theorem~\ref{thm:main}, the bosonization of $\C[U_F^+]_\beta$ is a compact quantum group that is a cocycle twist of $T\times U^+_F$.

\smallskip

When $T=\T$ and $Z$ is such that both $Z$ and $F\bar ZF^{-1}$ are diagonal matrices, we recover the braided free unitary quantum groups defined in~\cite{BJR}. We remark that we can of course always assume that one of the matrices $Z$ or $F\bar ZF^{-1}$  is diagonal by choosing an appropriate orthonormal basis, but it is usually impossible to make both of them diagonal simultaneously.

\subsection{Anyonic quantum permutation groups} Let $N \geq 2$ be a natural number. The quantum symmetric group $S_N^+$ is the universal compact matrix quantum group with fundamental unitary representation $U = (u_{ij})_{i,j= 0}^{N-1}$ subject to the relations
\[\sum_i u_{ij} = 1 = \sum_j u_{ij} \quad \mbox{ and } \quad u_{ij}^* = u_{ij}^2 = u_{ij} .\]
We can view the cyclic group $\Z/N\Z$ as a subgroup of $S_N\subset S_N^+$, so we get a Hopf $*$-algebra map
\[\pi\colon \C[S_N^+] \to \C[\Z/N\Z], \quad \pi(u_{ij}) = \delta_{j-i}, \]
where $\delta_{k} \in \C[\Z/N\Z]$ are the usual delta-functions.

To describe the transmutation we want to express the relations in $\C[S_N^+]$ in terms of homogeneous elements with respect to the bi-grading by $\Z/N\Z$. Fix a primitive $N$-th root of unity~$\omega$. By Lemma 4.9 in \cite{ABRR} such generators can be obtained by considering the elements $a_{ij}$ defined~by
\[ (a_{ij})_{i,j} = \Omega^{-1}U\Omega, \quad \Omega = \Big( \dfrac{1}{N}\omega^{-ij} \Big)_{i,j = 0}^{N-1}.\]
The elements $a_{ij}$ are then bi-graded by
\[(\pi \otimes \iota)\Delta(a_{ij}) = z^i\otimes a_{ij}, \quad (\iota\otimes\pi)\Delta(u_{ij}) = a_{ij} \otimes z^j, \]
where $z \in \C[\Z/N\Z]$ is the function $z(k) = \omega^k$. In terms of the new generators the relations in~$\C[S_N^+]$ become
\[a_{0i} = a_{i0} = \delta_{i,0}, \quad a_{ij}^* = a_{-i,-j}, \]
\[a_{k,i+j} = \sum_l a_{k-l,i}a_{lj}, \quad a_{i+j,k} = \sum_l a_{jl}a_{i,k-l}. \]

Define a bicharacter by $\beta(z^i,z^j) = \omega^{-ij}$. It is then readily verified, by using formulas~\eqref{eq:trans-prod} and~\eqref{eq:trans-star}, that the transmutation $\C[S_N^+]_\beta$ is described by the relations
\[a_{0i} = a_{i0} = \delta_{i,0}, \quad a_{ij}^* =  \omega^{i(j-i)}a_{-i,-j}, \]
\[a_{k,i+j} = \sum_l \omega^{-l(i-k + l)}a_{k-l,i}a_{lj}, \quad a_{i+j,k} = \sum_l \omega^{-i(l-j)} a_{jl}a_{i,k-l}.\]
These are exactly the relations in \cite[Definition 2.7]{ABRR}.
Finally, by Theorem~\ref{thm:main}, the bosonization of $\C[S_N^+]_\beta$ is a cocycle twist of the quantum group $(\Z/N\Z)\times S_N^+$.

\printbibliography

\end{document}